\theoremstyle{plain}
\newtheorem{theor10}{Theorem}
\newenvironment{theor1}
  {\pushQED{\qed}\begin{theor10}}
  {\popQED\end{theor10}}
\newtheorem{lem10}[theor10]{Lemma}
\newenvironment{lem1}
  {\pushQED{\qed}\begin{lem10}}
  {\popQED\end{lem10}}
\newtheorem{theor0}{Theorem}[section]
\newenvironment{theor}
  {\pushQED{\qed}\begin{theor0}}
  {\popQED\end{theor0}}
\newtheorem{lem0}[theor0]{Lemma}
\newenvironment{lem}
  {\pushQED{\qed}\begin{lem0}}
  {\popQED\end{lem0}}
\newtheorem{prop0}[theor0]{Proposition}
\newenvironment{prop}
  {\pushQED{\qed}\begin{prop0}}
  {\popQED\end{prop0}}
\newtheorem{cor0}[theor0]{Corollary}
\theoremstyle{definition}
\newtheorem{rem0}[theor0]{Remark}
\newenvironment{rem}
  {\pushQED{\qed}\begin{rem0}}
  {\popQED\end{rem0}}
\theoremstyle{plain}
\newtheorem{as0}[theor0]{Assumption}
\newtheorem*{asn0*}{\assumptionnumber}
  \providecommand{\assumptionnumber}{}
  \newenvironment{asn0}[2]
   {\renewcommand{\assumptionnumber}{Assumption \!(#1) {\normalfont--- #2}}
    \begin{asn0*}
    \protected@edef\@currentlabel{{\normalfont(#1)}}}
   {\end{asn0*}}
\newenvironment{asn}
  {\pushQED{\qed}\begin{asn0}}
  {\popQED\end{asn0}}
\numberwithin{equation}{section}
\newcommand{\e}{\varepsilon}
\newcommand{\Exc}{\operatorname{Exc}}
\newcommand{\Pc}{\mathcal{P}}
\newcommand{\Lc}{\mathcal{L}}
\newcommand{\Ec}{\mathcal E}
\newcommand{\dist}{\operatorname{dist}}
\newcommand{\R}{\mathbb R}
\newcommand{\Z}{\mathbb Z}
\newcommand{\Ic}{\mathcal I}
\newcommand{\Md}{\mathbb M}
\newcommand{\Nc}{\mathcal N}
\newcommand{\cvf}{\rightharpoonup}
\newcommand{\loc}{{\operatorname{loc}}}
\newcommand{\Id}{\operatorname{Id}}
\newcommand{\D}{\operatorname{D}}
\newcommand{\ee}{e}
\newcommand{\supess}{\operatorname{sup\,ess}}
\newcommand{\infess}{\operatorname{inf\,ess}}
\newcommand{\bb}{\bar{\boldsymbol b}}
\newcommand{\Bb}{\bar{\boldsymbol B}}
\newcommand{\Ld}{\operatorname{L}}
\newcommand{\Div}{{\operatorname{div}}}
\newcommand{\Sym}{{\operatorname{sym}}}
\newcommand{\Skew}{{\operatorname{skew}}}
\newcommand{\Tr}{{\operatorname{tr}}}
\newcommand{\step}[1]{\noindent \textit{Step} #1.}
\newcommand{\substep}[1]{\noindent \textit{Substep} #1.}
\newcommand{\Pm}{\mathbb{P}}
\newcommand{\E}{\mathbb{E}}
\newcommand{\expec}[1]{\mathbb{E}\left[ #1 \right]}
\newcommand{\expecm}[1]{\mathbb{E}\big[ #1 \big]}
\newcommand{\expecmm}[1]{\mathbb{E}\,[ #1 ]}
\newcommand{\expecM}[1]{\mathbb{E}\bigg[ #1 \bigg]}
\newcommand{\var}[1]{\mathrm{Var}\left[#1\right]}
\title[Quantitative homogenization of random suspensions]{Quantitative homogenization theory\\for random suspensions in steady Stokes flow}
\author[M. Duerinckx]{Mitia Duerinckx}
\address[Mitia Duerinckx]{Universit\'e Paris-Saclay, CNRS, Laboratoire de Math\'ematiques d'Orsay, 91400~Orsay, France
\& University of California, Los Angeles, Departement of Mathematics, CA 90095, USA
\& Universit\'e Libre de Bruxelles, D\'epartement de Math\'ematique, 1050~Brussels, Belgium}
\email{mitia.duerinckx@u-psud.fr}
\author[A. Gloria]{Antoine Gloria}
\address[Antoine Gloria]{Sorbonne Universit\'e, CNRS, Universit\'e de Paris, Laboratoire Jacques-Louis Lions, 75005~Paris, France \& Institut Universitaire de France \& Universit\'e Libre de Bruxelles, D\'epartement de Math\'ematique, 1050~Brussels, Belgium}
\email{gloria@ljll.math.upmc.fr}
\begin{document}
\selectlanguage{english}

\begin{abstract}
This work develops a quantitative homogenization theory for random suspensions of rigid particles in a steady Stokes flow, and completes recent qualitative results. More precisely, we establish a large-scale regularity theory for this Stokes problem, and we prove moment bounds for the associated correctors and optimal estimates on the homogenization error; the latter further requires a quantitative ergodicity assumption on the random suspension. Compared to the corresponding quantitative homogenization theory for divergence-form linear elliptic equations, substantial difficulties arise from the analysis of the fluid incompressibility and the particle rigidity constraints.
Our analysis further applies to the problem of stiff inclusions in (compressible or incompressible) linear elasticity and in electrostatics; it is also new in those cases, even in the periodic setting.

\bigskip\noindent
{\sc MSC-class:}
35R60; 76M50; 35Q35; 76D03; 76D07.
\end{abstract}
\maketitle

\setcounter{tocdepth}{1}
\tableofcontents

\section{Introduction}

We start with the formulation of the steady Stokes model describing a viscous fluid in presence of a random suspension of small rigid particles, see e.g.~\cite{DG-19}.
Throughout, we denote by $d\ge 2$ the space dimension, we consider a given random set $\Ic=\bigcup_nI_n\subset\R^d$, where $\{I_n\}_n$ stands for the different particles, and we denote by $x_n$ the barycenter of $I_n$. Ergodicity, hardcore, and regularity assumptions are listed in Section~\ref{sec:main-res}.
To model a dense suspension of small particles, we rescale the random set~$\Ic$ by a small parameter $\e>0$ and consider $\e\Ic=\bigcup_n\e I_n$.
Next, we view these small particles $\{\e I_n\}_n$ as suspended in a solvent described by the steady Stokes equation: in a reference domain $U\subset\R^d$, given an internal forcing $f\in \Ld^2(U)^d$, the fluid velocity $u_\e$ satisfies
\begin{equation}\label{e.intro1}
-\triangle u_\e+\nabla P_\e=f,\qquad\Div( u_\e)=0,\qquad\text{in $U\setminus\e\Ic$},
\end{equation}
with $u_\e=0$ on $\partial U$.
(Assume for the moment that no particle intersects the boundary~$\partial U$.)
No-slip conditions are imposed at particle boundaries: since particles are constrained to have rigid motions, this amounts to letting the velocity field~$u_\e$ be extended inside particles, with the rigidity constraint
\begin{equation}\label{e.intro+2}
\D(u_\e)=0,\qquad\text{in $\e\Ic$},
\end{equation}
where $\D(u_\e)$ denotes the symmetrized gradient of $u_\e$; in other words, this condition means that the velocity field $u_\e$ coincides with a rigid motion $V_{\e,n}+\Theta_{\e,n}(x-\e x_n)$ inside each particle $\e I_n$ (centered at $\e x_n$), for some $V_{\e,n}\in\R^d$ and some skew-symmetric matrix $\Theta_{\e,n}\in\R^{d\times d}$.
Finally, assuming that the particles have the same mass density as the fluid, buoyancy forces vanish, and
the force and torque balances on each particle take the form
\begin{align}
&\int_{\e\partial I_n}\sigma(u_\e,P_\e)\nu=0,\label{eq:force-bal}\\
&\int_{\e\partial I_n}\Theta(x-\e x_n)\cdot\sigma(u_\e,P_\e)\nu=0,\quad\text{for all skew-symmetric $\Theta\in\R^{d\times d}$},\label{e.intro+3}
\end{align}
in terms of the Cauchy stress tensor
\begin{equation}\label{eq:Cauchy}
\sigma(u_\e,P_\e)=2\D(u_\e)-P_\e\Id,
\end{equation}
where $\nu$ stands for the outward unit normal vector at the particle boundaries.
In the physically relevant 3D case, skew-symmetric matrices $\Theta\in\R^{3\times 3}$ are equivalent to cross products $\theta\times$ with $\theta\in\R^3$, and equations recover their usual form.

\medskip
In the companion article~\cite{DG-19}, we proved that in the macroscopic limit $\e \downarrow 0$ the velocity and pressure fields $(u_\e,P_\e)$ converge weakly to $(\bar u, \bar P+\bb:\D(\bar u))$, where $(\bar u,\bar P)$ solves the homogenized equation 
\begin{equation}\label{e.intro2} 
\quad\left\{\begin{array}{ll}
-\Div(2\Bb \D(\bar u))+\nabla\bar P=(1-\lambda) f ,&\text{in $U$},\\
\Div(\bar u)=0,&\text{in $U$},\\
\bar u=0,&\text{on $\partial U$},
\end{array}\right.
\end{equation}
for some effective viscosity tensor $\Bb$ and some effective matrix $\bb$, where $\lambda=\expec{\mathds1_\Ic}$ denotes the volume fraction of the suspension.
The aim of the present contribution is twofold:
\begin{enumerate}[(I)]
\item We make this qualitative convergence result quantitative by optimally estimating the error between $(u_\e,P_\e)$ and a two-scale expansion based on $(\bar u, \bar P+\bb:\nabla \bar u)$ in terms of suitable correctors, cf.~Theorem~\ref{th:hom-quant} below.
\smallskip\item We develop a large-scale regularity theory for the Stokes problem~\eqref{e.intro1}--\eqref{e.intro+3}, which ensures that on large scales the solution~$u_\e$ has the same 
regularity properties as the solution $\bar u$ of the limiting equation \eqref{e.intro2} (both in terms of $C^{1,1-}$ Schauder theory and in terms of $\Ld^p$ regularity), cf.~Theorems~\ref{th:schauder},~\ref{th:lp-reg}, and~\ref{th:ann-reg} below.
\end{enumerate}
On the one hand, part~(I) provides the optimal quantitative version of~\cite{DG-19}, estimating the error in the homogenization process.
This is proved under a strong mixing assumption on the random suspension~$\Ic$, which is conveniently formulated in form of a multiscale variance inequality in the spirit of~\cite{DG1,DG2}.
On the other hand, part~(II)
makes precise the intuitive idea that the Stokes problem~\eqref{e.intro1}--\eqref{e.intro+3} should inherit the regularity properties of the limiting equation~\eqref{e.intro2} on sufficiently large scales, which is expressed intrinsically in terms of the growth of correctors.
This is established under a mere ergodicity assumption, but the results are only of practical use provided that a strong control is available on the minimal necessary large scales: this requires a quantitative control on the growth of correctors and is established here for convenience again under a strong mixing assumption in form of a multiscale variance inequality.

\medskip
Our main motivation to develop a large-scale regularity theory for~\eqref{e.intro1}--\eqref{e.intro+3} stems from the sedimentation problem for a random suspension in a Stokes flow under a constant gravity field $\ee\in\R^d$,  in which case the force balance~\eqref{eq:force-bal} is  replaced by
\[\fint_{\e\partial I_n}\sigma(u_\e,P_\e)\nu\,+\ee\,=\,0.\]
Since energy is then pumped into the system, na\"ive energy estimates blow up, and the analysis crucially relies on stochastic cancellations.
Annealed $\Ld^p$ regularity in form of Theorem~\ref{th:ann-reg} below constitutes the main technical input in~\cite{DG-20} for our analysis of this sedimentation problem.
More precisely, in a general non-dilute regime, this allows us to obtain the first rigorous proof of the celebrated predictions by Batchelor~\cite{Batchelor-72} and by Caflisch and Luke~\cite{Caflisch-Luke} on the effective sedimentation speed and on individual velocity fluctuations, thus significantly extending the perturbative results of~\cite{Gloria-19} (see also~\cite{Hofer-19}).

\medskip
Although the present contribution primarily focusses on random suspensions of rigid particles in a steady Stokes flow, we point out 
that our arguments apply more generally to homogenization problems with stiff inclusions.
First note that equation~\eqref{e.intro1} can be written in the equivalent form
\begin{equation}\label{eq:rewr-eqn-sigma}
-\Div\big(\sigma(u_\e,P_\e)\big)=f,\qquad\Div (u_\e)=0,\qquad\text{in $U\setminus \e\Ic$},
\end{equation}
with $u_\e=0$ on $\partial U$, where we recall that $\sigma(u_\e,P_\e)$ denotes the Cauchy stress tensor~\eqref{eq:Cauchy},
and the equation is completed by the rigidity constraint $\D(u_\e)=0$ on the inclusions $\e\Ic$
and by the boundary conditions \eqref{eq:force-bal}--\eqref{e.intro+3}.
We mention a few physical models that can be obtained as a slight modification of the above:
\begin{enumerate}[\quad$\bullet$]
\item \emph{Incompressible linear elasticity with stiff inclusions} takes the same form, with the Cauchy stress tensor replaced by $\sigma(u_\e,P_\e)=K\D(u_\e)-P_\e\Id$,
in terms of the constant stiffness tensor $K$ of the background material (satisfying the Legendre--Hadamard condition).
Surprisingly, the qualitative homogenization of this problem is quite recent and follows from~\cite{DG-19}.\footnote{In this problem it might make more sense to include the internal forcing $f$ in the boundary conditions, replacing~\eqref{eq:force-bal} by $\int_{\e\partial I_n}\sigma(u_\e,P_\e)\nu+\int_{\e I_n} f=0$. In that case, the forcing term in the homogenized problem \eqref{e.intro2} is~$f$ rather than $(1-\lambda)f$; this is only a minor change in the analysis.}
\smallskip\item \emph{Compressible linear elasticity with stiff inclusions} is obtained by dropping the incompressibility constraint $\Div (u_\e)=0$ in~\eqref{eq:rewr-eqn-sigma}, and replacing the Cauchy stress tensor by $\sigma(u_\e)=K\D(u_\e)$, in terms of the constant stiffness tensor $K$ of the background material.
In this case, qualitative homogenization follows from~\cite[Chapter~3]{JKO94};
see also \cite{Braides-Garroni-95} for a compactness result in a corresponding nonlinear setting.
\smallskip\item \emph{Linear electrostatics with stiff inclusions} amounts to taking $u_\e$ scalar-valued, dropping the incompressibility constraint in~\eqref{eq:rewr-eqn-sigma}, and replacing the Cauchy stress-tensor by $\sigma(u_\e)=K\nabla u_\e$, in terms of the constant conductivity matrix $K$ of the background material. We refer to~\cite[Chapter~3]{JKO94} for the qualitative homogenization of this problem (under weaker hardcore conditions).
\end{enumerate}
Our present \emph{quantitative} analysis covers all these other models for the first time.
Our results are new even in the periodic setting (that is, when $\Ic$ is a periodic set), in which case Theorems~\ref{th:cor} and~\ref{th:hom-quant} below hold with $\mu_d\equiv 1$.

\bigskip
Before turning to precise statements of our results, we discuss the context.
The present contribution constitutes a natural extension
to the steady Stokes problem~\eqref{e.intro1}--\eqref{e.intro+3}
of the by-now well-developed quantitative homogenization
theory for the model case of divergence-form linear elliptic equations with random coefficients. This theory was started in~\cite{GO1,GO2,GNO1,GNO2}, with statements close to Theorem~\ref{th:hom-quant} below under
similar mixing conditions,
while a more mature theory was later initiated in~\cite{AS} based on large-scale regularity.
For recent developments, we refer the reader to the recent monograph~\cite{AKM-book}, based on~\cite{AM-16,AKM1,AKM2}, and to the alternative series of works~\cite{GNO-reg,GNO-quant,GO4,DO1,JO}.
In the present contribution,
we consider for simplicity a strong mixing assumption in form of a multiscale variance inequality~\cite{DG1,DG2},
and large-scale regularity is established
by following the approach in~\cite{GNO-reg,GNO-quant,DO1}, but we believe that the one in~\cite{AKM-book} could be used as well (see~\cite[Appendix]{DG-20c}).
We further focus for simplicity on the weakly correlated setting: as inspired by~\cite{JO}, this allows to bypass part of the argument in~\cite{GNO-reg,GNO-quant} by appealing to deterministic regularity (in form of Meyers' perturbative estimates) and to a buckling argument based on stochastic cancellations, which makes the proof particularly short and efficient.
The strongly correlated setting could be treated by following~\cite{GNO-reg}, but it would substantially increase both the technicality and the length of the argument.

\medskip
Compared to the model case of divergence-form linear elliptic equations with random coefficients, we face three main additional difficulties in this work:
\begin{enumerate}[\quad---]
\item
the rigidity constraint on the particles makes the canonical structure of fluxes and flux correctors less obvious: as in~\cite{D-20}, fluxes are constructed via a nontrivial extension procedure, which is crucial to obtain optimal convergence rates;
\smallskip\item
naïve two-scale expansions are incompatible with the rigidity constraint on the particles, thus requiring some local surgery;
\smallskip\item
the incompressibility of the fluid gives rise to the pressure in the equation and makes many estimates more involved.
\end{enumerate}

\subsection*{Notation}
\begin{enumerate}[\quad$\bullet$]
\item For vector fields $u,u'$ and matrix fields $T,T'$, we set $(\nabla u)_{ij}=\partial_ju_i$, $\Div( T)_i=\partial_jT_{ij}$, $T\!\!:\!\!T'=T_{ij}T'_{ij}$, $(u\otimes u')_{ij}=u_iu'_j$, where we systematically use Einstein's summation convention on repeated indices. We also write {$\partial_Eu=E:\nabla u$} for any matrix $E$.
\smallskip\item For a vector field~$u$ and scalar field $P$, we denote by $(\D(u))_{ij}=\frac12(\partial_ju_i+\partial_iu_j)$ the symmetrized gradient and we recall the notation $\sigma(u,P)=2\D(u)-P\Id$ for the Cauchy stress tensor. We also recall that $\nu$ stands for the outward unit normal vector at particle boundaries.
\smallskip\item We denote by $\Md_0\subset\R^{d\times d}$ the subset of trace-free matrices, by $\Md_0^\Sym$ the subset of symmetric trace-free matrices, and by $\Md^\Skew$ the subset of skew-symmetric matrices.
\smallskip\item We denote by $C\ge1$ any constant that only depends on dimension $d$, on the constant $\delta>0$ in Assumption~\ref{Hd} below, on the weight $\pi$ in Assumption~\ref{Mix+} if applicable, and on the reference domain~$U$. We use the notation $\lesssim$ (resp.~$\gtrsim$) for $\le C\times$ (resp.~\mbox{$\ge\frac1C\times$}) up to such a multiplicative constant $C$. We write $\ll$ (resp.~$\gg$) for $\le \frac1C\times$ (resp.~\mbox{$\ge C\times$}) up to a sufficiently large multiplicative constant $C$.
We add subscripts to $C,\lesssim,\gtrsim,\ll,\gg$ in order to indicate dependence on other parameters.
\smallskip\item The ball centered at $x$ of radius $r$ in $\R^d$ is denoted by $B_r(x)$, and we simply write $B(x)=B_1(x)$, $B_r=B_r(0)$, and $B=B_1(0)$.
\smallskip\item For a function $f$, we write $[f]_2(x):=(\fint_{B(x)}|f|^2)^{1/2}$ for the local moving quadratic averages at the unit scale.
\smallskip\item We set $\langle r \rangle =(1+r^2)^{1/2}$ for $r \ge 0$, we set $\langle x\rangle=(1+|x|^2)^{1/2}$ for $x\in\R^d$, and we similarly write $\langle\nabla\rangle=(1-\triangle)^{1/2}$.
\end{enumerate}

\section{Main results}\label{sec:main-res}

\subsection{Assumptions}

Given an underlying probability space~$(\Omega,\Pm)$,
let $\Pc=\{x_n\}_n$ be a random point process on $\R^d$, and consider a collection of random shapes $\{I_n^\circ\}_n$, where each~$I_n^\circ$ is a connected random Borel subset of the unit ball $B$ and is centered at $0$ in the sense of $\int_{I_n^\circ}x\,dx=0$. We then define the corresponding inclusions $I_n:=x_n+I_n^\circ$ centered at the points of $\Pc$,
and we consider the random set $\Ic:=\bigcup_nI_n$.
We also denote by $I_n^+$ the convex hull of $I_n$, hence $I_n\subset I_n^+\subset B(x_n)$.
Throughout, we make the following general assumptions, for some fixed deterministic constant $\delta>0$.

\begin{asn}{H$_\delta$}{General conditions}\label{Hd}$ $
\begin{enumerate}[\quad$\bullet$]
\item \emph{Stationarity and ergodicity:} The random set $\Ic$ is stationary and ergodic.
\smallskip\item \emph{Uniform $C^2$ regularity:} The random shapes $\{I_n^\circ\}_n$ satisfy interior and exterior ball conditions with radius $\delta$ almost surely.
\smallskip\item \emph{Uniform hardcore condition:} There holds $(I_n^++\delta B)\cap(I_m^++\delta B)=\varnothing$ almost surely for all $n\ne m$.
\qedhere
\end{enumerate}
\end{asn}

In view of quantitative homogenization results, we need to further consider quantitative ergodicity assumptions, which we make here for simplicity in form of a multiscale variance inequality as we introduced in~\cite{DG1,DG2}.

\begin{asn}{Mix$^+$}{Quantitative mixing condition}\label{Mix+}$ $\\
There exists a non-increasing weight function $\pi:\R^+\to\R^+$ with superalgebraic decay (that is, $\pi(\ell)\le C_p\langle\ell\rangle^{-p}$ for all $p<\infty$) such that
the random set $\Ic$ satisfies, for all $\sigma(\Ic)$-measurable random variables~$Y(\Ic)$,
\begin{equation}\label{eq:SGL}
\var{Y(\Ic)}\,\le\,\expec{\int_0^\infty\int_{\R^d}\Big(\partial^{\operatorname{osc}}_{\Ic,B_\ell(x)}Y(\Ic)\Big)^2dx\,\langle\ell\rangle^{-d}\pi(\ell)\,d\ell},
\end{equation}
where the ``oscillation'' $\partial^{\operatorname{osc}}$ of the random variable $Y(\Ic)$ is defined by
\begin{align*}
&\partial^{\operatorname{osc}}_{\Ic,B_\ell(x)}Y(\Ic)\,:=\,\supess\Big\{Y(\Ic'):\Ic'\cap(\R^d\setminus B_\ell(x))=\Ic\cap(\R^d\setminus B_\ell(x))\Big\}\\
&\hspace{4cm}-\infess\Big\{Y(\Ic'):\Ic'\cap(\R^d\setminus B_\ell(x))=\Ic\cap(\R^d\setminus B_\ell(x))\Big\}.\qedhere
\end{align*}
\end{asn}

\subsection{Corrector estimates}

We first recall the suitable definition of correctors associated with the steady Stokes problem~\eqref{e.intro1}--\eqref{e.intro+3}, as introduced in the companion work~\cite[Proposition~2.1]{DG-19}.

\begin{lem1}[Correctors; \cite{DG-19}]\label{lem:cor}
Under Assumption~\ref{Hd}, for all $E\in\Md_0$, there exists a unique solution $(\psi_E,\Sigma_E)$ to the following infinite-volume corrector problem:
\begin{enumerate}[\quad$\bullet$]
\item Almost surely, $(\psi_E,\Sigma_E)$ belongs to $H^1_\loc(\R^d)^d\times\Ld^2(\R^d\setminus\Ic)$ and satisfies in the strong sense,
\begin{equation}\label{eq:corr-Stokes}
\qquad\qquad\left\{\begin{array}{ll}
-\triangle\psi_E+\nabla\Sigma_E=0,&\text{in $\R^d\setminus\Ic$},\\
\Div(\psi_E)=0,&\text{in $\R^d\setminus\Ic$},\\
\D(\psi_E+Ex)=0,
&\text{in $\Ic$},\\
\int_{\partial I_n}\sigma\big(\psi_E+E(x-x_n),\Sigma_E\big)\nu=0,&\forall n,\\
\int_{\partial I_n}\Theta(x-x_n)\cdot\sigma\big(\psi_E+E(x-x_n),\Sigma_E\big)\nu=0,&\forall n,\,\forall\Theta\in\Md^\Skew.
\end{array}\right.
\end{equation}
\item The gradient field $\nabla\psi_E$ and the pressure field $\Sigma_E\mathds1_{\R^d\setminus\Ic}$ are stationary, they have vanishing expectation, they have finite second moments, and $\psi_E$ satisfies the anchoring condition $\fint_B \psi_E=0$ almost surely.
\end{enumerate}
In addition, the corrector $\psi_E$ is sublinear at infinity, that is, $\e\psi_E(\frac\cdot\e)\cvf{}0$ in $H^1_\loc(\R^d)^d$ almost surely as $\e\downarrow0$. Note that $(\psi_E,\Sigma_E)=(\psi_{E^\Sym},\Sigma_{E^\Sym})$ where $E^\Sym$ denotes the symmetric part of $E$.
\end{lem1}

As a key tool for quantitative homogenization, we establish the following moment bounds on correctors.
Inspired by the corresponding corrector estimates for divergence-form linear elliptic equations in~\cite{JO}, the proof is based on the analysis of stochastic cancellations for large-scale averages of the corrector gradient, together with perturbative annealed $\Ld^p$ regularity and a buckling argument.
If the weight $\pi$ in Assumption~\ref{Mix+} has some stretched exponential decay, then the moment bounds below can be upgraded to corresponding stretched exponential moments.

\begin{theor1}[Corrector estimates]\label{th:cor}
Under Assumptions~\ref{Hd} and~\ref{Mix+}, for all~$E\in\Md_0$ and $q<\infty$, we have
\begin{equation}\label{eq:bnd-grad-cor}
\|[(\nabla\psi_E,\Sigma_E\mathds1_{\R^d\setminus\Ic})]_2\|_{\Ld^q(\Omega)}\,\lesssim_q\, |E|,
\end{equation}
and
\begin{equation}\label{eq:bnd-cor}
\|[\psi_E]_2(x)\|_{\Ld^q(\Omega)}\,\lesssim_q\, |E|\,\mu_d(|x|),\qquad\mu_d(r):=\left\{\begin{array}{lll}
1&:&d>2,\\
\log(2+r)^\frac12&:&d=2,\\
\langle r\rangle^\frac12&:&d=1.
\end{array}\right.
\end{equation}
In particular, in dimension $d>2$, up to relaxing the anchoring condition, the solution $\psi_E$ of the infinite-volume problem~\eqref{eq:corr-Stokes} can be uniquely constructed itself as a stationary field with vanishing expectation.
\end{theor1}
\begin{rem}
We include the case $d=1$ in the statements for completeness, in which case the problem is scalar without incompressibility constraint.
\end{rem}

\subsection{Large-scale regularity}\label{sec:reg0}
Given a random forcing $g\in C^\infty_c(\R^d;\Ld^\infty(\Omega)^{d\times d})$, we consider the unique solution $(\nabla u_g,P_g)\in\Ld^\infty(\Omega;\Ld^2(\R^d)^{d\times d}\times\Ld^2(\R^d\setminus\Ic))$ of the following steady Stokes problem,
\begin{equation}\label{eq:test-v0}
\left\{\begin{array}{ll}
-\triangle u_g+\nabla P_g=\Div (g),&\text{in $\R^d\setminus\Ic$},\\
\Div(u_g)=0,&\text{in $\R^d\setminus\Ic$},\\
\D(u_g)=0,&\text{in $\Ic$},\\
\int_{\partial I_{n}}\big(g+\sigma(u_g,P_g)\big)\nu=0,&\forall n,\\
\int_{\partial I_{n}}\Theta(x-x_n)\cdot\big(g+\sigma(u_g,P_g)\big)\nu=0,&\forall n,\,\forall\Theta\in\Md^\Skew.
\end{array}\right.
\end{equation}
The energy inequality yields, almost surely,
\begin{equation}\label{eq:energy}
\|\nabla u_g\|_{\Ld^2(\R^d)}\le\|g\|_{\Ld^2(\R^d\setminus\Ic)}.
\end{equation}
Aside from Meyers' perturbative improvements of this energy inequality, cf.~Section~\ref{sec:Meyers}, and aside from local regularity theory, no other regularity estimates are expected to hold in general in a deterministic form due to the presence of the rigidity constraints on the random set of particles --- except in a dilute regime when particles are sufficiently far apart, cf.~Remark~\ref{rem:reg-hofer}.
However, in view of homogenization, the heterogeneous Stokes problem~\eqref{eq:test-v0} can be replaced on large scales by a homogenized one in form of~\eqref{e.intro2}.
Since standard constant-coefficient regularity theory is available for this large-scale approximation,  the solution to~\eqref{eq:test-v0} should enjoy improved regularity properties on large scales.
This type of result was pioneered by Avellaneda and Lin~\cite{Avellaneda-Lin-87,Avellaneda-Lin-91} in the context of periodic homogenization in the model setting of divergence-form linear elliptic equations.
In the stochastic case, while early contributions in form of annealed Green's function estimates appeared in~\cite{Delmotte-Deuschel-05,MaO}, a quenched large-scale regularity theory was first outlined by Armstrong and Smart~\cite{AS}, and later fully developed in~\cite{AM-16,AKM1,AKM2,AKM-book} and in~\cite{GNO-reg,GNO-quant,GO4}.
We also mention the useful reformulation in form of annealed regularity in~\cite{DO1}.
Based on these ideas, we develop corresponding quenched large-scale and annealed regularity theories for the steady Stokes problem~\eqref{eq:test-v0}, which constitute the key technical ingredient in our work~\cite{DG-20} on sedimentation.

\medskip
We start with a quenched large-scale Schauder theory.
Hölder norms are reformulated à la Campanato in terms of the growth of local integrals, and the latter are restricted to scales $\ge r_*$ for some (well-controlled) random minimal radius~$r_*$.
As is natural, note that Hölder regularity is measured by replacing Euclidean coordinates $x\mapsto Ex$ by their heterogeneous versions $x\mapsto \psi_E(x)+Ex$ in terms of the corrector~$\psi_E$.

\begin{theor1}[Quenched large-scale Schauder theory]\label{th:schauder}
Under Assumption~\ref{Hd}, given \mbox{$\alpha\in(0,1)$}, there exists an almost surely finite stationary random field $r_*\ge1$ on $\R^d$, see~\eqref{e.def-r*}, such that the following holds: For all $g\in C^\infty_c(\R^d;\Ld^\infty(\Omega)^{d\times d})$ and $R\ge r_*(0)$, if~$\nabla u_g$ is a solution of the steady Stokes problem~\eqref{eq:test-v0} in $B_R$, then the following large-scale Lipschitz estimate holds on scales $\ge r_*(0)$,
\begin{equation}\label{eq:Lip}
\sup_{r_*(0)\le r\le R} \fint_{B_r} |\nabla u_g|^2 \,\lesssim\, \fint_{B_R} |\nabla u_g|^2+ \sup_{r_*(0)\le r\le R} \Big(\frac R r\Big)^{2\alpha}\fint_{B_r} \Big|g-\fint_{B_r}g\Big|^2,
\end{equation}
as well as the following large-scale $C^{1,\alpha}$ estimate,
\begin{equation}\label{eq:C1a}
\sup_{r_*(0)\le r\le R}\frac1{r^{2\alpha}}\Exc(\nabla u_g;B_r) \,\lesssim\, \frac1{R^{2\alpha}} \Exc(\nabla u_g;B_R)+\sup_{r_*(0)\le r\le R} \frac1{r^{2\alpha}} \fint_{B_r} \Big|g-\fint_{B_r}g\Big|^2,
\end{equation}
where the excess is defined by
\begin{equation}\label{eq:def-exc}
\Exc(h;D)\,:=\,\inf_{E\in\Md_0}\fint_D|h-(\nabla\psi_E+E)|^2.
\end{equation}
Under Assumption~\ref{Mix+}, the so-called minimal radius $r_*(0)$ satisfies \mbox{$\expec{r_*(0)^q}<\infty$} for all $q<\infty$.
\end{theor1}
As in~\cite{Armstrong-Daniel-16},~\cite[Section~7]{AKM-book}, \cite[Corollary~4]{GNO-reg}, or~\cite[Proposition~6.4]{DO1}, the above large-scale Lipschitz regularity~\eqref{eq:Lip} can be exploited together with a Calder\'on--Zygmund argument to deduce the following $\Ld^p$ regularity estimate on scales $\ge r_*$.
\begin{theor1}[Quenched large-scale $\Ld^p$ regularity]\label{th:lp-reg}
Under Assumption~\ref{Hd}, there exists an almost surely finite stationary random field $r_*\ge1$ on $\R^d$ as in Theorem~\ref{th:schauder} such that the following holds: For all $g\in C^\infty_c(\R^d;\Ld^\infty(\Omega)^{d\times d})$ and $1<p<\infty$, the solution $(\nabla u_g,P_g)$ of the steady Stokes problem~\eqref{eq:test-v0} satisfies
\[\bigg(\int_{\R^d}\Big(\fint_{B_*(x)}|\nabla u_g|^2\Big)^\frac p2dx\bigg)^\frac1p\,\lesssim_p\,\bigg(\int_{\R^d}\Big(\fint_{B_*(x)}|g|^2\Big)^\frac p2dx\bigg)^\frac1p,\]
where we use the short-hand notation $B_*(x):=B_{r_*(x)}(x)$.
\end{theor1}

As in~\cite{DG-20}, we establish the following annealed version of the above quenched large-scale~$\Ld^p$~regularity statement.
The main merit of this estimate is that a stochastic $\Ld^q(\Omega)$ norm appears inside the spatial $\Ld^p(\R^d)$ norm and allows to remove local quadratic averages on the random minimal scale~$r_*$ (up to a tiny loss of stochastic integrability), which is particularly convenient for applications.

\begin{theor1}[Annealed $\Ld^p$ regularity]\label{th:ann-reg}
Under Assumptions~\ref{Hd} and~\ref{Mix+}, for all $g\in C^\infty_c(\R^d;\Ld^\infty(\Omega)^{d\times d})$, $1<p,q<\infty$, and $\eta>0$, the solution $(\nabla u_g,P_g)$ of the steady Stokes problem~\eqref{eq:test-v0} satisfies
\begin{equation}\label{eq:ann-reg-st}
\|[\nabla u_g]_2\|_{\Ld^p(\R^d;\Ld^q(\Omega))}\,\lesssim_{p,q,\eta}\,\|[g]_2\|_{\Ld^p(\R^d;\Ld^{q+\eta}(\Omega))}.
\end{equation}
In addition, under only Assumption~\ref{Hd} (without need of Assumption~\ref{Mix+}), a Meyers' perturbative result holds without loss of stochastic integrability: there exists $C_0\simeq1$ such that the same estimate~\eqref{eq:ann-reg-st} holds with $\eta=0$ provided $|p-2|,|q-2|<\frac1{C_0}$.
\end{theor1}

\begin{rem}[Deterministic $\Ld^p$ regularity in dilute regime]\label{rem:reg-hofer}
In the dilute regime, the recent work of Höfer~\cite{Hofer-19} on the reflection method
easily yields the following version of the above; the proof is a direct adaptation of~\cite{Hofer-19} and is omitted.
This also constitutes a variant of the dilute Green's function estimates in~\cite[Lemma~2.7]{Gloria-19}.\\
{\it Under assumption~\ref{Hd}, we denote by $\delta(\Ic)\ge2\delta$ the minimal interparticle distance in $\Ic$.
For all $1<p,q<\infty$, there exists a constant $\delta_{p}>0$ (only depending on $d,p$) such that, provided $\Ic$ is dilute enough in the sense of $\delta(\Ic)\ge\delta_{p}$, the following holds: Given a random forcing $g\in\Ld^\infty(\Omega;C^\infty_c(\R^d)^{d\times d})$, the solution $(\nabla u_g,P_g)$ of the steady Stokes problem~\eqref{eq:test-v0} satisfies
\[\|\nabla u_g\|_{\Ld^p(\R^d;\Ld^q(\Omega))}\,\lesssim_{p,q}\,\|g\|_{\Ld^p(\R^d;\Ld^q(\Omega))},\]
as well as the following deterministic estimate, almost surely,
\[\|\nabla u_g\|_{\Ld^p(\R^d)}\,\lesssim_{p}\,\|g\|_{\Ld^p(\R^d)}.\qedhere\]
}
\end{rem}

\subsection{Quantitative homogenization result}\label{sec:homog}
We consider a steady Stokes fluid in a domain $U\subset\R^d$ with some internal forcing and with a dense suspension of small particles, cf.~\eqref{e.intro1}--\eqref{e.intro+3}, and we analyze the fluid velocity in the non-dilute homogenization regime with vanishing particle size but fixed volume fraction. Suspended particles in the fluid act as obstacles and hinder the fluid flow, thus increasing the flow resistance, that is, the viscosity. The system is then expected to behave approximately like an homogeneous Stokes fluid with some effective viscosity, cf.~\eqref{e.intro2}. This was the basis of Perrin's celebrated experiment to estimate the Avogadro number as inspired by Einstein's PhD thesis~\cite{Einstein-06}.

\medskip\noindent
Before stating the homogenization result, given a reference domain $U$, the set of particles must be modified to avoid particles intersecting the boundary: we consider the random set~$\Nc_\e(U)$ of all indices $n$ 
such that $\e(I_n^++\delta B)\subset U$, and we define
\[\Ic_\e(U):=\bigcup_{n\in\Nc_\e(U)}\e I_n.\]
Particles in this collection are of size $O(\e)$ and are at distance at least $\e\delta$ from the boundary~$\partial U$ and from one another, cf.~\ref{Hd}. We may now turn to the statement of the quantitative homogenization result, which provides an optimal quantitative version of the homogenization result in the companion article~\cite{DG-19}.

\begin{theor1}[Quantitative homogenization result]\label{th:hom-quant}
Under Assumptions~\emph{\ref{Hd}} and~\emph{\ref{Mix+}},
given a smooth bounded domain $U\subset\R^d$ and a forcing $f\in W^{1+\alpha,\infty}(U)^d$ for some $\alpha>0$, consider for all $\e>0$ the unique solution \mbox{$(u_\e,P_\e)\in\Ld^\infty(\Omega;H^1_0(U)^d\times\Ld^2(U\setminus\Ic_\e(U)))$} of the steady Stokes problem
\begin{equation}\label{eq:Stokes}
\left\{\begin{array}{ll}
-\triangle u_\e+\nabla P_\e=f,&\text{in $U\setminus\Ic_\e(U)$},\\
\Div (u_\e)=0,&\text{in $U\setminus\Ic_\e(U)$},\\
u_\e=0,&\text{on $\partial U$},\\
\D(u_\e)=0,&\text{in $\Ic_\e(U)$},\\
\int_{\e\partial I_n}\sigma(u_\e,P_\e)\nu=0,&\forall n\in\Nc_\e(U),\\
\int_{\e\partial I_n}\Theta(x-x_n)\cdot\sigma(u_\e,P_\e)\nu=0,&\forall n\in\Nc_\e(U),\,\forall\Theta\in\Md^\Skew.
\end{array}\right.
\end{equation}
Also consider the unique solution $(\bar u,\bar P)\in H^1_0(U)^d\times\Ld^2(U)$ of the corresponding homogenized Stokes problem
\begin{equation}\label{eq:Stokes-hom}
\quad\left\{\begin{array}{ll}
-\Div(2\Bb \D(\bar u))+\nabla\bar P=(1-\lambda) f ,&\text{in $U$},\\
\Div(\bar u)=0,&\text{in $U$},\\
\bar u=0,&\text{on $\partial U$},
\end{array}\right.
\end{equation}
where $\lambda:=\expec{\mathds 1_{\Ic}}$ denotes the volume fraction of the suspension
and where the effective viscosity tensor $\Bb$ is positive definite on $\Md_0^\Sym$ and is given by
\begin{equation}\label{eq:def-B}
\Bb:= \sum_{E,E'\in\Ec}(E'\otimes E)\,\expec{(\D(\psi_{E'})+E'):(\D(\psi_{E})+E)},
\end{equation}
where the sum runs over an orthonormal basis $\Ec$ of $\Md_0^\Sym$ and where we recall that the corrector $(\psi_E,\Sigma_E)$ is defined in Lemma~\ref{lem:cor}.
Then,
the following quantitative corrector result holds for all $q<\infty$,
\begin{align}\label{eq:hom-quant0}
&\bigg\|u_\e-\bar u-\e\sum_{E\in\Ec}\psi_E(\tfrac\cdot\e)\partial_E\bar u\bigg\|_{\Ld^q(\Omega;H^1(U))}\nonumber\\
+&\inf_{\kappa\in\R}~\bigg\|P_\e-\bar P-\bb:\D(\bar u)
-\sum_{E\in\Ec}(\Sigma_E\mathds1_{\R^d\setminus\Ic})(\tfrac\cdot\e)\partial_E\bar u-\kappa\bigg\|_{\Ld^q(\Omega;\Ld^2(U\setminus\Ic_\e(U)))}\nonumber\\
&\hspace{8cm}\,\lesssim_{\alpha,q}\,
\big(\e\mu_d(\tfrac1\e)\big)^\frac12\,\|f\|_{W^{1+\alpha,\infty}(U)},
\end{align}
where we recall that $\mu_d$ is defined in~\eqref{eq:bnd-cor}
and where the effective matrix $\bb\in\Md_0^\Sym$ is given~by
\begin{equation}\label{eq:def-b}
\bb:E\,:=\, \frac1d\,\expecM{\sum_n\frac{\mathds1_{I_n}}{|I_n|}\int_{\partial I_n}(x-x_n)\cdot\sigma(\psi_E+Ex,\Sigma_E)\nu}.
\end{equation}
In addition, if $f$ and $\bar u$ are compactly supported in $U$, then boundary layers disappear and the bound~\eqref{eq:hom-quant0} holds with the optimal convergence rate $\e\mu_d(\frac1\e)$.
\end{theor1}

\section{Perturbative annealed regularity}\label{sec:Meyers}

This section is devoted to the proof of the  Meyers-type perturbative result stated in Theorem~\ref{th:ann-reg}.

\begin{samepage}
\begin{theor}[Perturbative annealed $\Ld^p$ regularity]\label{th:meyers-ann}
Under Assumption~\ref{Hd}, there exists a constant $C_0\simeq1$ such that the following holds:
For all $g\in C^\infty_c(\R^d;\Ld^\infty(\Omega)^{d\times d})$, the solution $(\nabla u_g,P_g)$ of the Stokes problem~\eqref{eq:test-v0} satisfies for all~$p,q$ with $|p-2|,|q-2|\le\frac1{C_0}$,
\[\|[\nabla u_g]_2\|_{\Ld^p(\R^d;\Ld^q(\Omega))}\,\lesssim\,\|[g]_2\|_{\Ld^p(\R^d;\Ld^q(\Omega))}.\qedhere\]
\end{theor}
\end{samepage}

\subsection{Preliminary}
We start with a number of PDE tools that are useful in the proof.

\vspace{-0.3cm}
\subsubsection{Whole-space weak formulations}
The steady Stokes problem~\eqref{eq:test-v0} can be reformulated as an equation on the whole space, where particles generate source terms concentrated at their boundaries. This reformulation is particularly convenient for our computations.

\begin{lem}\label{lem:reform-whole}
The solution $(\nabla u_g,P_g)$ of the steady Stokes problem~\eqref{eq:test-v0} satisfies in the weak sense in the whole space~$\R^d$,
\begin{equation}\label{e:up-whole0}
-\triangle u_g+\nabla (P_g\mathds1_{\R^d\setminus\Ic})\,=\,\Div(g\mathds1_{\R^d\setminus\Ic})-\sum_{n} \delta_{\partial I_n}\big(g+\sigma(u_g,P_g)\big)\nu.
\end{equation}
Likewise, the corrector~$(\psi_E,\Sigma_E)$ in Lemma~\ref{lem:cor} satisfies in the weak sense in $\R^d$,
\begin{equation}\label{eq:psiE-ref0}
-\triangle\psi_E+\nabla(\Sigma_E\mathds1_{{\R^d\setminus\Ic}})=-\sum_n\delta_{\partial I_n}\,\sigma(\psi_E+Ex,\Sigma_E)\nu.
\qedhere
\end{equation}
\end{lem}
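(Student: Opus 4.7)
The plan is to test the claimed distributional identity against an arbitrary $\varphi\in C_c^\infty(\R^d)^d$ and verify it by splitting the integral over $\R^d=(\R^d\setminus\Ic)\cup\Ic$, the crucial idea being to exploit the \emph{global} incompressibility of $u_g$ before doing any integration by parts.

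First I would observe that $u_g$ is divergence-free on all of $\R^d$ in the distributional sense: on $\R^d\setminus\Ic$ this is the Stokes incompressibility, while on each particle $I_n$ the rigidity $\D(u_g)=0$ forces $u_g$ to be a rigid motion, so that $\Div(u_g)=\Tr\D(u_g)=0$; the continuity of $u_g$ across $\partial I_n$ (no-slip) then yields $\Div(u_g)=0$ globally on $\R^d$. From this I obtain the key algebraic identity
\[\int_{\R^d}\nabla u_g:\nabla\varphi\,=\,\int_{\R^d}2\D(u_g):\D(\varphi),\]
which follows by integrating by parts the transposed cross term $\int\partial_ju_{g,i}\,\partial_i\varphi_j=-\int u_{g,i}\,\partial_i\Div(\varphi)=\int\Div(u_g)\,\Div(\varphi)=0$.

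Next, using symmetry of $\D(u_g)$, the tested left-hand side of~\eqref{e:up-whole0} rewrites as $\int_{\R^d}\sigma^*\!:\!\nabla\varphi$ with $\sigma^*:=2\D(u_g)-P_g\mathds1_{\R^d\setminus\Ic}\Id$. The rigidity $\D(u_g)=0$ on $\Ic$ makes $\sigma^*$ vanish inside the particles, so the integral reduces to $\int_{\R^d\setminus\Ic}\sigma(u_g,P_g)\!:\!\nabla\varphi$. On $\R^d\setminus\Ic$ the Stokes system together with $\Div(u_g)=0$ is equivalent to $\Div\big(\sigma(u_g,P_g)+g\big)=0$, and integration by parts on this Lipschitz domain (outward normal $-\nu$ on $\partial I_n$) produces
\[\int_{\R^d\setminus\Ic}\sigma(u_g,P_g)\!:\!\nabla\varphi\,=\,-\int_{\R^d\setminus\Ic}g:\nabla\varphi-\sum_n\int_{\partial I_n}\big(g+\sigma(u_g,P_g)\big)\nu\cdot\varphi,\]
which is precisely the action of the right-hand side of~\eqref{e:up-whole0} on $\varphi$. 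The corrector identity~\eqref{eq:psiE-ref0} will follow by the same argument applied to the tensor $\sigma(\psi_E+Ex,\Sigma_E)$: since $E\in\Md_0$ is trace-free and $\D(\psi_E+Ex)=0$ on $\Ic$, one has $\Div(\psi_E)=-\Tr(E)=0$ in $\Ic$, so $\psi_E$ is globally divergence-free and the same calculation applies.

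The only genuinely delicate point --- and the one I expect to flag as the ``main obstacle'' --- is the substitution of $\nabla u_g$ by $2\D(u_g)$ under the integral sign via global incompressibility. Without this step, naive integration by parts produces the wrong surface term $[(\nabla u_g)-P_g\Id]\nu$ rather than $\sigma(u_g,P_g)\nu$ on $\partial I_n$, the discrepancy $(\nabla u_g)^T\nu$ being nontrivial because $u_g$ is not $C^1$ across $\partial I_n$. It is worth noting that the force and torque balances play \emph{no} role in this reformulation; they enter only through the definition and solvability of the original problem~\eqref{eq:test-v0}.
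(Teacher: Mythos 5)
Your proof is correct, and it follows the same overall strategy as the paper (test against a smooth compactly supported field, exploit rigidity inside the particles and the PDE outside), but the key algebraic step is organized differently. The paper integrates by parts on $\R^d\setminus\Ic$ keeping $\nabla u_g$ in the flux, which produces the ``wrong'' boundary term $(\nabla u_g-P_g\Id)\nu$, and then shows separately that the interior integral $\int_\Ic\nabla\zeta:\nabla u_g$ equals $-\sum_n\int_{\partial I_n}(\nu\otimes\zeta):\nabla u_g$ (using that $\nabla u_g$ is skew-symmetric in each rigid particle), which is exactly the correction upgrading $\nabla u_g\,\nu$ to $2\D(u_g)\nu$. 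You instead symmetrize globally up front via $\int\nabla u_g:\nabla\varphi=\int 2\D(u_g):\D(\varphi)$, using the global incompressibility of $u_g$, after which the symmetrized stress vanishes on $\Ic$ and a single integration by parts on the exterior with the correct Cauchy stress finishes the job. Both arguments rest on the same two consequences of $\D(u_g)=0$ in $\Ic$; yours is marginally cleaner and makes transparent why the force and torque balances are not needed. One small point to make explicit in the corrector case: $2\D(\psi_E)$ does \emph{not} vanish in $\Ic$ (it equals $-2E$ there), so you must indeed work with $\sigma(\psi_E+Ex,\Sigma_E)$ as you indicate, and the replacement of $\nabla\psi_E$ by $\D(\psi_E+Ex)$ under the test integral uses the trivial identities $\int_{\R^d}\nabla\varphi=0$ and $\Tr(E)=0$; with that said, the argument goes through verbatim.
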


\begin{proof}
We focus on the proof of~\eqref{e:up-whole0}, while the argument for~\eqref{eq:psiE-ref0} is similar.
Given $\zeta\in C^\infty_c(\R^d)^d$, testing equation~\eqref{eq:test-v0} with $\zeta$ and integrating by parts on ${\R^d\setminus\Ic}$, we find
\begin{equation}\label{eq:reform-prepre}
\int_{{\R^d\setminus\Ic}}\nabla\zeta:\nabla u - \int_{{\R^d\setminus\Ic}}\Div(\zeta)\,P
\,=\,-\int_{\R^d\setminus\Ic}\nabla\zeta: g-\sum_{n}\int_{\partial I_{n}}(\zeta\otimes\nu):(g+\nabla u-P\Id).
\end{equation}
The claim~\eqref{e:up-whole0} follows provided we prove that
\begin{equation}\label{ag+1}
\int_{\Ic}\nabla\zeta:\nabla u \,=\,-\sum_{n}\int_{\partial I_{n}}(\nu \otimes \zeta):\nabla u.
\end{equation}
Indeed, adding the latter to~\eqref{eq:reform-prepre} yields the claim~\eqref{e:up-whole0}, in view of
\[\int_{\partial I_{n}}(\nu \otimes \zeta+\zeta\otimes\nu):\nabla u=\int_{\partial I_{n}}\zeta\otimes\nu:2\D(u).\]
We turn to the proof of~\eqref{ag+1}.
Since $u$ is affine in $I_{n}$, Stokes' formula yields
\[\int_{\partial I_{n}}(\nu\otimes\zeta):\nabla u=\int_{\partial I_{n}}\zeta_i\nu\cdot\partial_iu=\int_{I_{n}}\Div(\zeta_i\partial_iu)=\int_{I_{n}}\nabla\zeta_i\cdot\partial_iu.\]
The relation $\D(u)=0$ on $I_{n}$ entails that $\nabla u$ is skew-symmetric in $I_n$, so that the above becomes
\[\int_{\partial I_{n}}(\nu\otimes\zeta):\nabla u=-\int_{I_{n}}\nabla\zeta_i\cdot\nabla u_i=-\int_{I_{n}}\nabla\zeta:\nabla u,\]
and the claim~\eqref{ag+1} follows.
\end{proof}

\subsubsection{Localized pressure estimates}
We establish the following localized pressure estimate for the steady Stokes problem~\eqref{eq:test-v0}.
It follows from standard pressure estimates in~\cite{Galdi}, but as in~\cite[Proof of Proposition~2.1]{DG-19} some additional care is needed to make it uniform with respect to the size of $D$ although~$\Ic$ consists of an unbounded number of components; a short proof is included for convenience.

\begin{lem}[\cite{Galdi,DG-19}]\label{lem:pres}
Given a deterministic point set $\{x_{n}\}_n$ satisfying the hardcore and regularity conditions in~\ref{Hd}, for all $g\in \Ld^2_\loc(\R^d)^{d\times d}$ and all balls $D\subset\R^d$, any solution $(u_g,P_g)$ of the steady Stokes problem~\eqref{eq:test-v0} in $D$ satisfies for all $1<p<\infty$,
\begin{equation*}
\Big\|P_g-\fint_{D\setminus\Ic}P_g\Big\|_{\Ld^p(D\setminus\Ic)}\,\lesssim_p\, \|(\nabla u_g,g)\|_{\Ld^p(D\setminus\Ic)}.\qedhere
\end{equation*}
\end{lem}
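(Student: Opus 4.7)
The strategy is the classical Ne\v{c}as--Bogovskii duality argument, with additional care to ensure uniformity of the constant in the number of particles. By $\Ld^p$--$\Ld^{p'}$ duality ($\tfrac1p+\tfrac1{p'}=1$), it suffices to show that for every mean-zero $h\in\Ld^{p'}(D\setminus\Ic)$ there exists $v\in W^{1,p'}_0(D\setminus\Ic)^d$ with $\Div v=h$ and
$$\|\nabla v\|_{\Ld^{p'}(D\setminus\Ic)}\,\lesssim_p\,\|h\|_{\Ld^{p'}(D\setminus\Ic)},$$
with constant independent of the realization of $\Ic$. Granting this, I would extend $v$ by zero inside every particle, producing a legitimate test field for the whole-space weak formulation of Lemma~\ref{lem:reform-whole}: the Dirac surface terms vanish since $v=0$ on every $\partial I_n$, and integration by parts combined with $\int_{D\setminus\Ic} h=0$ yields
$$\int_{D\setminus\Ic}\Big(P_g-\fint_{D\setminus\Ic}P_g\Big)\,h \,=\, \int_{D\setminus\Ic}\nabla v:(\nabla u_g+g)\,\lesssim\,\|\nabla v\|_{\Ld^{p'}(D\setminus\Ic)}\,\|(\nabla u_g,g)\|_{\Ld^p(D\setminus\Ic)}.$$
Taking the supremum over admissible $h$ would give the claim.

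The construction of $v$ is the heart of the matter because $D\setminus\Ic$ has unboundedly many holes. The plan is to exploit the hardcore buffers $A_n:=(I_n^++\delta B)\setminus I_n$, which by Assumption~\ref{Hd} are pairwise disjoint annular shells of $\delta$-uniform geometry around each particle. First, decompose $h = h_0 + \sum_n h_n$ with $h_n:=\mathds{1}_{A_n}\big(h-\fint_{A_n}h\big)$, so that each $h_n\in\Ld^{p'}(A_n)$ has zero mean and $h_0$ is piecewise constant on the buffers while keeping zero mean on $D\setminus\Ic$. On each $A_n$ the local Bogovskii operator yields $v_n\in W^{1,p'}_0(A_n)^d$ solving $\Div v_n=h_n$ with $\|\nabla v_n\|_{\Ld^{p'}(A_n)}\lesssim_p\|h_n\|_{\Ld^{p'}(A_n)}$, the constant being uniform in~$n$ by scale invariance and the $\delta$-uniform geometry of the buffers. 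For the remainder $h_0$, extend it by zero across the particles into a mean-zero function on the ball $D$, apply the standard global Bogovskii operator there to get $\tilde v_0\in W^{1,p'}_0(D)^d$ with $\|\nabla\tilde v_0\|_{\Ld^{p'}(D)}\lesssim_p\|h\|_{\Ld^{p'}(D\setminus\Ic)}$, and finally subtract a further local correction on each buffer $A_n$ that kills the trace of $\tilde v_0$ on $\partial I_n$ without changing the divergence --- feasible because $\tilde v_0$ is divergence-free inside $I_n$, hence the trace has vanishing flux, so this amounts to another uniform Bogovskii-type problem on $A_n$. This yields $v_0\in W^{1,p'}_0(D\setminus\Ic)^d$. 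Setting $v:=v_0+\sum_n v_n$, the pairwise disjointness of the $\{A_n\}_n$ turns the sum of local bounds into an $\ell^{p'}$ sum that reassembles into $\|h\|_{\Ld^{p'}}$.

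The main obstacle I expect is precisely the uniformity of the Bogovskii constant: applied directly to the heavily perforated domain $D\setminus\Ic$, it would depend on the number and configuration of holes, which the homogenization context cannot afford. The hardcore assumption \ref{Hd} is the decisive ingredient, as it delivers disjoint buffers of controlled geometry on which all the local corrections used in the construction enjoy scale-invariant estimates; once this uniform right-inverse of the divergence is in place, the rest is a routine duality computation in the spirit of Galdi's framework.
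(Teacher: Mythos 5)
Your duality setup is sound and the first half of your construction (local Bogovskii solves on the disjoint buffers $A_n=(I_n^++\delta B)\setminus I_n$) matches the spirit of the paper's argument, but there is a genuine gap: you require the test field $v$ to vanish on every $\partial I_n$, i.e.\ you claim a right inverse of the divergence on $W^{1,p'}_0(D\setminus\Ic)$ with constant depending only on $d,p,\delta$. No such uniform inverse exists. Take $D=B_R$ with particles arranged periodically at unit scale and $h=\operatorname{sgn}(x_1)$ minus its mean: any $v\in W^{1,p'}_0(B_R\setminus\Ic)^d$ with $\Div(v)=h$ must carry a flux of order $R^d$ through the cross-section $\{x_1=0\}\setminus\Ic$; but since $v$ vanishes on the particle boundaries, which are $O(1)$-dense, a unit-scale Poincar\'e inequality in the slab $\{|x_1|\le 1\}$ plus a trace estimate give $\|v\|_{\Ld^{p'}(\{x_1=0\}\setminus\Ic)}\lesssim\|\nabla v\|_{\Ld^{p'}}$, whence $\|\nabla v\|_{\Ld^{p'}}\gtrsim R^{d-(d-1)/p}\simeq R^{1/p}\,\|h\|_{\Ld^{p'}}$. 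Concretely, your construction breaks at the step ``kill the trace of $\tilde v_0$ on $\partial I_n$'': any correction $c_n$ supported in $A_n$ with $c_n=-\tilde v_0$ on $\partial I_n$ and $c_n=0$ on the outer boundary satisfies $\|\nabla c_n\|_{\Ld^{p'}(A_n)}\gtrsim\|\tilde v_0\|_{\Ld^{p'}(\partial I_n)}$, and this zeroth-order trace is not controlled by local gradient norms of $\tilde v_0$; summing over $n$ costs a factor $\operatorname{diam}(D)$ through Poincar\'e on all of $D$. Since the lemma is invoked with $D=B_R$ and $R\uparrow\infty$ (and the constant in the statement must be independent of $D$), this loss is fatal.

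The missing idea — and what the paper does — is to relax the requirement on the test field: it need only be \emph{constant} on each $I_n$ (extended inside the particle), because the Dirac boundary terms in the weak formulation~\eqref{e:up-whole0} then vanish thanks to the force balance $\int_{\partial I_n}(g+\sigma(u_g,P_g))\nu=0$, not because the test field vanishes there. The correction on $I_n+\frac\delta2 B$ is then an extension of $-\tilde v_0+\fint_{I_n}\tilde v_0$ (plus a divergence-fixing Bogovskii term on the annulus), which is controlled by $\|\nabla \tilde v_0\|_{\Ld^{p'}(I_n)}$ via Poincar\'e at \emph{unit} scale; summing over the disjoint fattened inclusions then yields a constant independent of $D$. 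Two smaller points: particles with $(I_n+\frac\delta2 B)\cap\partial D\ne\varnothing$ need the separate treatment the paper gives them (there one corrects to the value zero, using Poincar\'e on $(I_n+\delta B)\cap D$), and the paper tests directly with the dual function $Q|Q|^{p-2}$ rather than running the abstract duality over all $h$ — but these are cosmetic compared with the trace issue above.
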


\begin{proof}
We split the proof into two steps.

\medskip
\step1 Preliminary: There is a vector field $S\in W^{1,p'}_0(D)^d$ such that $S|_{I_{n}}$ is constant for all $n$ and such that
\begin{gather*}
\Div(S)\,=\,\Big(Q|Q|^{p-2}-\fint_{D\setminus\Ic}Q|Q|^{p-2}\Big)\mathds1_{D\setminus\Ic},
\qquad Q:=P_g-\fint_{D\setminus\Ic}P_g,\\
\|\nabla S\|_{\Ld^{p'}(D)}\,\lesssim_p\,\|Q|Q|^{p-2}\|_{\Ld^{p'}(D\setminus\Ic)}\,=\,\|Q\|_{\Ld^p(D\setminus\Ic)}^{p-1},
\end{gather*}
where we emphasize that the prefactor in the last estimate is uniformly bounded independently of $D$.

\medskip\noindent
By a standard use of the Bogovskii operator in form of~\cite[Theorem~III.3.1]{Galdi}, there exists a vector field~$S^\circ\in W^{1,p'}_0(D)^d$ such that
\begin{gather*}
\Div(S^\circ)\,=\,\Big(Q|Q|^{p-2}-\fint_{D\setminus\Ic}Q|Q|^{p-2}\Big)\mathds1_{D\setminus\Ic},\\
\|\nabla S^\circ\|_{\Ld^{p'}(D)}\,\lesssim_p\,\|Q|Q|^{p-2}\|_{\Ld^{p'}(D\setminus\Ic)}.
\end{gather*}
We need to modify $S^\circ$ to make it constant in $I_n$ while keeping the divergence-free constraint and without increasing the norm.
For all $n$ such that $I_n+\frac\delta2 B \subset D$, choose an extension $\tilde S^\circ_n\in W_0^{1,p'}(I_n+\frac\delta2 B)^d$ such that $\tilde S^\circ_n=-S^\circ+\fint_{I_n}S^\circ$ in $I_n$ and
\[\|\tilde S^\circ_n\|_{W^{1,p'}(I_n+\frac\delta2 B)}\,\lesssim\,\Big\|S^\circ-\fint_{I_n}S^\circ\Big\|_{\Ld^{p'}(I_n)}.\]
So defined, $S^\circ+\tilde S^\circ_n$ is constant on $I_n$ but not divergence-free.
By a standard use of the Bogovskii operator in form of~\cite[Theorem~III.3.1]{Galdi}, there exists a vector field $\tilde S^n\in W^{1,p'}_0((I_n+\frac\delta2 B)\setminus I_n)^d$ such that
\begin{gather*}
\Div(\tilde S^n)\,=\,-\Div(\tilde S_n^\circ),\quad \text{in }I_n+\tfrac\delta2 B,\\
\|\nabla\tilde S^n\|_{\Ld^{p'}((I_n+\frac\delta2 B)\setminus I_n)}\,\lesssim_p\,\|\nabla \tilde S_n^\circ\|_{\Ld^{p'}(I_n+\frac\delta2 B)}.
\end{gather*}
We then define $S^n:=\tilde S^\circ_n+\tilde S^n\in W^{1,p'}_0(I_n+\frac\delta2 B)^d$, which satisfies $S^n=\tilde S^\circ_n=-S^\circ+\fint_{I_n}S^\circ$ in $I_n$ and in addition, combining the above with Poincaré's inequality,
\begin{gather}
\Div( S^n) = 0,\nonumber\\
\|\nabla S^n\|_{\Ld^{p'}(I_n+\frac\delta2 B)}\,\lesssim_p\,\|\nabla S^\circ\|_{\Ld^{p'}(I_n)}.\label{eq:ext-Bogo}
\end{gather}
For all $n$ such that $(I_n+\frac\delta2 B) \cap \partial D \ne \varnothing$,
we proceed to a similar construction, replacing $I_n+\frac\delta2 B$ by $(I_n+\frac\delta2 B)\cap D$, and $\fint_{I_n} S^\circ$ by zero. Using Poincar\'e's inequality on \mbox{$(I_n+\delta B) \cap D$}, rather than Poincar\'e's inequality with vanishing average on $I_n+\frac\delta2 B$, this provides a vector field $S^n\in W^{1,p'}_0((I_n+\frac\delta2B)\cap D)^d$, which satisfies $S^n=-S^\circ$ in $I_n\cap D$ and
\begin{gather*}
\Div( S^n) = 0,\\
\|\nabla S^n\|_{\Ld^{p'}((I_n+\frac\delta2B)\cap D)}\,\lesssim_p\,\|\nabla S^\circ\|_{\Ld^{p'}((I_n+\delta B)\cap D)}.
\end{gather*}
Since the fattened inclusions $\{(I_n+\delta B)\cap D\}_n$ are all disjoint, cf.~\ref{Hd}, implicitly extending~$S^n$ by~$0$ outside its domain of definition, the vector field $S:=S^\circ+\sum_nS^n$ satisfies all the required properties.

\medskip
\step2 Conclusion.\\
Testing equation~\eqref{e:up-whole0} with $S$, using that $S$ is constant inside particles, and recalling the boundary conditions for $u_g$, cf.~\eqref{eq:test-v0}, we are led to
\begin{equation*}
\int_{D\setminus\Ic}\Div(S)\,P_g\,=\,\int_{D}\nabla S:\nabla u_g-\int_{D\setminus \Ic}\nabla S:g.
\end{equation*}
Inserting the definition of $\Div(S)$, recalling that $\nabla S$ vanishes in $\Ic$, and using Hölder's inequality, we find
\begin{equation*}
\|Q\|_{\Ld^p(D\setminus\Ic)}^p\,\lesssim_p\,\|\nabla S\|_{\Ld^{p'}(D)}\|(\nabla u_g,g)\|_{\Ld^{p}(D\setminus\Ic)},
\end{equation*}
and the claim follows from the bound on the norm of $\nabla S$ in Step~1.
\end{proof}

\subsubsection{Dual Calder\'on--Zygmund lemma}
As in~\cite{DO1}, we shall appeal to the following dual version of the Calder\'on--Zygmund lemma due to Shen~\cite[Theorem~3.2]{Shen-07}.
For a ball $D\subset\R^d$, we henceforth set $D=B_{r_D}(x_D)$ and use the abusive short-hand notation  $k D:=B_{k r_D}(x_D)$ for dilations centered at the same point.

\begin{lem}[\cite{Shen-07}]\label{lem:Shen}
Given $1\le p_0<p_1\le\infty$, $F,G\in\Ld^{p_0}\cap\Ld^{p_1}(\R^d)$, and $C_0>0$, assume that for all balls $D\subset \R^d$ there exist measurable functions $F_{D,0}$ and $F_{D,1}$ such that \mbox{$|F|\le |F_{D,0}|+|F_{D,1}|$} and $|F_{D,1}|\le |F|+|F_{D,0}|$ on $D$, and such that
\begin{eqnarray*}
\Big(\fint_{D}|F_{D,0}|^{p_0}\Big)^\frac1{p_0}&\le&C_0\Big(\fint_{C_0D}|G|^{p_0}\Big)^\frac1{p_0},\\
\Big(\fint_{\frac1{C_0}D}|F_{D,1}|^{p_1}\Big)^\frac1{p_1}&\le&C_0\Big(\fint_{D}|F_{D,1}|^{p_0}\Big)^\frac1{p_0}.
\end{eqnarray*}
Then,
for all $p_0<p<p_1$,
\[\Big(\int_{\R^d}|F|^p\Big)^\frac1p\,\lesssim_{C_0,p_0,p,p_1}\,\Big(\int_{\R^d}|G|^p\Big)^\frac1p.\qedhere\]
\end{lem}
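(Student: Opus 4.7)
The plan is to follow Shen's original strategy, based on a good-$\lambda$ inequality combined with the $\Ld^p$-boundedness of an auxiliary maximal function. Throughout the argument, denote by $M$ the Hardy--Littlewood maximal function over balls in $\R^d$, and set $M_{p_0}h := (M|h|^{p_0})^{1/p_0}$, which is bounded on $\Ld^p$ for every $p>p_0$ by Marcinkiewicz interpolation.

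The core step is to establish the following good-$\lambda$ inequality: there exist constants $\epsilon,c>0$ (depending only on $d,C_0,p_0,p_1$) such that, for all $N\ge 2$ and $\lambda>0$,
\[
\bigl|\{|F|>N\lambda\}\cap\{M_{p_0}G\le \epsilon\lambda\}\bigr|\,\le\,c\,N^{-p_1}\,\bigl|\{M_{p_0}(|F|^{p_0})^{1/p_0}>\lambda\}\bigr|.
\]
To prove this, I would apply a Vitali/Calder\'on--Zygmund stopping-time decomposition to the level set $E_\lambda:=\{M_{p_0}(|F|^{p_0})^{1/p_0}>\lambda\}$, producing a countable disjoint family of balls $\{D_i\}_i$ covering $E_\lambda$ (up to bounded-overlap dilations) on which $\fint_{D_i}|F|^{p_0}\approx\lambda^{p_0}$. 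For each $D_i$ meeting $\{M_{p_0}G\le\epsilon\lambda\}$, the first hypothesis yields $\fint_{D_i}|F_{D_i,0}|^{p_0}\lesssim\fint_{C_0D_i}|G|^{p_0}\lesssim\epsilon^{p_0}\lambda^{p_0}$, and then the pointwise bound $|F_{D_i,1}|\le|F|+|F_{D_i,0}|$ forces $\fint_{D_i}|F_{D_i,1}|^{p_0}\lesssim\lambda^{p_0}$. The second (reverse-H\"older) hypothesis upgrades this into $\fint_{(1/C_0)D_i}|F_{D_i,1}|^{p_1}\lesssim\lambda^{p_1}$, so Chebyshev gives $|\{|F_{D_i,1}|>N\lambda/2\}\cap(1/C_0)D_i|\lesssim N^{-p_1}|D_i|$. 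Combining with the split $|F|\le|F_{D_i,0}|+|F_{D_i,1}|$ and choosing $\epsilon$ small enough that $|F_{D_i,0}|\le N\lambda/2$ except on a set with measure $\lesssim N^{-p_0}|D_i|\le N^{-p_1}|D_i|$, summing over $i$ delivers the claimed inequality.

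Next, I would integrate the good-$\lambda$ inequality against $p(N\lambda)^{p-1}d\lambda$ and apply the layer-cake formula to obtain
\[
\|F\|_{\Ld^p}^p\,\le\, c\,N^{p-p_1}\,\|M_{p_0}(|F|^{p_0})^{1/p_0}\|_{\Ld^p}^p+C_{N,\epsilon}\,\|M_{p_0}G\|_{\Ld^p}^p.
\]
Applying the $\Ld^p$-boundedness of $M_{p_0}$ on both right-hand terms, this becomes $\|F\|_{\Ld^p}^p\le c'N^{p-p_1}\|F\|_{\Ld^p}^p+C'_{N,\epsilon}\|G\|_{\Ld^p}^p$. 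Since $p<p_1$, choosing $N$ so large that $c'N^{p-p_1}\le \tfrac12$ and absorbing the first term on the left concludes the proof.

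The main obstacle is precisely this absorption step, which requires $\|F\|_{\Ld^p}$ to be a priori finite --- otherwise transferring the term across the inequality is meaningless. This is granted by the standing assumption $F\in\Ld^{p_0}\cap\Ld^{p_1}(\R^d)$ together with standard interpolation, which yields $F\in\Ld^p(\R^d)$ for all $p_0<p<p_1$ (though without any quantitative control at that stage). A secondary technical point is to reconcile the two dilation factors $C_0D$ and $(1/C_0)D$ appearing in the hypotheses with the geometry of the Vitali covering; this introduces only dimensional overlap constants and does not affect the final estimate.
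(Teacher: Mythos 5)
The paper does not actually prove this lemma: it is quoted from Shen~\cite[Theorem~3.2]{Shen-07} and used as a black box, so your proposal can only be measured against Shen's original argument --- which is indeed the good-$\lambda$/maximal-function strategy you outline. Your overall architecture (Whitney--Calder\'on--Zygmund decomposition of the level set of $M(|F|^{p_0})$, local use of the two hypotheses on suitably dilated balls, Chebyshev, layer-cake integration, $\Ld^{p/p_0}$-boundedness of the maximal function, and absorption justified by the a priori finiteness of $\|F\|_{\Ld^p}$ coming from $F\in\Ld^{p_0}\cap\Ld^{p_1}$) is the right one and matches Shen's proof; the dilation bookkeeping between $C_0D$ and $\frac1{C_0}D$ is, as you say, only a matter of dimensional constants.

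There is, however, a genuine quantitative error in the good-$\lambda$ inequality as you state it, precisely at the step you gloss over. With $\epsilon$ fixed independently of $N$, the contribution of $F_{D_i,0}$ can only be estimated by Chebyshev at exponent $p_0$, giving $|\{|F_{D_i,0}|>N\lambda/2\}\cap D_i|\lesssim(\epsilon/N)^{p_0}|D_i|$; your justification ``$\lesssim N^{-p_0}|D_i|\le N^{-p_1}|D_i|$'' reverses an inequality, since $p_0<p_1$ and $N\ge2$ give $N^{-p_0}\ge N^{-p_1}$. Consequently the inequality you claim (right-hand side $cN^{-p_1}$ for \emph{all} $N\ge2$ with $\epsilon$ independent of $N$) is false in general: take $F$ essentially equal to $F_{D,0}$, which is only controlled in $\Ld^{p_0}$ average. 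The correct local estimate yields
\[
\bigl|\{|F|>N\lambda\}\cap\{M_{p_0}G\le\epsilon\lambda\}\bigr|\,\lesssim\,\Bigl(\bigl(\tfrac\epsilon N\bigr)^{p_0}+N^{-p_1}\Bigr)\,\bigl|\{M_{p_0}F>\lambda\}\bigr|,
\]
with the convention that the $N^{-p_1}$ bad set is empty for $N$ large when $p_1=\infty$ (a case your write-up should also mention, since the reverse H\"older then gives an $\Ld^\infty$ bound). This defect is repairable and does not change the route: in the final absorption you only need a single value of $N$, so first choose $N$ so large that $C N^{p-p_1}\le\frac12$ (possible since $p<p_1$), and only then choose $\epsilon=\epsilon(N)$ so small that $CN^{p-p_0}\epsilon^{p_0}\le\frac12$; since $\epsilon$ only enters the constant multiplying $\|G\|_{\Ld^p}^p$ through $\|M_{p_0}G\|_{\Ld^p}$, the rest of your argument goes through verbatim. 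As written, though, the central claim of your proof is stated with the wrong quantifier order and an unprovable decay rate, so it needs this correction to constitute a proof. (Minor point: with your definition of $M_{p_0}$, the quantity appearing on the right-hand side should be written $M_{p_0}F$, not $M_{p_0}(|F|^{p_0})^{1/p_0}$.)
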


\subsubsection{Gehring's lemma}
We shall appeal to the following version of Gehring's lemma, which is a mild reformulation of~\cite[Proposition~5.1]{GiaMo}.

\begin{lem}[\cite{Gehring,GiaMo}]\label{lem:Gehring}
Given $1<q<s$ and a reference cube $Q_0\subset\R^d$, let $G\in\Ld^q(Q_0)$ and $F\in\Ld^s(Q_0)$ be nonnegative functions.
There exist $\theta_0>0$ (only depending on $d,q,s$) with the following property:
Given $\theta\le\theta_0$, if for some $C_0\ge1$ the following condition holds for all cubes $Q\subset Q_0$,
\[\Big(\fint_{\frac1{C_0}Q}G^q\Big)^\frac1q\le C_0\fint_{Q}G+C_0\Big(\fint_{Q}F^q\Big)^\frac1q+\theta\Big(\fint_{Q}G^q\Big)^\frac1q,\]
then there exists $\eta_0>0$ (only depending on $C_0,d,q,s$) such that for all $q\le p\le q+\eta_0$,
\[\Big(\fint_{\frac1{C_0}Q_0}G^p\Big)^\frac1p\lesssim_{C_0,q,r}\fint_{Q_0}G+\Big(\fint_{Q_0}F^p\Big)^\frac1p.\qedhere\]
\end{lem}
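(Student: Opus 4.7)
The lemma is a quantitative version of Gehring's classical self-improvement result for reverse H\"older inequalities, and I would prove it following the Calder\'on--Zygmund stopping-time argument originating with Gehring and refined by Giaquinta and Modica. The strategy is: (i) exploit the reverse-H\"older hypothesis cube by cube to show that the super-level sets $\{G > \lambda\}$ decay faster than the Chebyshev rate $\lambda^{-q}\int G^q$, and (ii) integrate this improved decay via the layer-cake formula to produce $\Ld^p$ integrability for $p$ slightly above $q$.

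Fix a concentric sub-cube $\tilde Q_0 \subset Q_0$ small enough that $C_0$-dilates of its sub-cubes remain inside $Q_0$, and set $\lambda_0 \simeq \fint_{Q_0} G + (\fint_{Q_0} F^q)^{1/q}$. For each level $\lambda > \lambda_0$, I would apply a Vitali-type covering of $\{G > \lambda\} \cap \tilde Q_0$ by pairwise disjoint cubes $\{Q_j\}$ with the stopping condition $\fint_{Q_j} G^q > \lambda^q$ and the controlled dilate bound $\fint_{C_0 Q_j} G^q \lesssim \lambda^q$. Applying the hypothesis on each dilate $C_0 Q_j$, using the dilate bound to absorb the perturbative term $\theta (\fint_{C_0 Q_j} G^q)^{1/q} \lesssim \theta \lambda$ into the left-hand side for $\theta \le \theta_0$ small enough, I obtain the simplified reverse-H\"older
\[
\lambda \,\lesssim\, \fint_{C_0 Q_j} G + \Big(\fint_{C_0 Q_j} F^q\Big)^{1/q}.
\]
Splitting the averages of $G$ and $F^q$ on $C_0 Q_j$ at cutoff level $\eta\lambda$ (resp.\ $(\eta\lambda)^q$) with $\eta$ small, absorbing the sub-level contributions into $\lambda$, and summing over $j$ with bounded overlap of $\{C_0 Q_j\}$, one derives the good-$\lambda$ inequality
\[
\lambda^q \,|\{G > \lambda\} \cap \tilde Q_0| \,\le\, A \int_{\{G > \eta\lambda\} \cap Q_0} G^q \,+\, B \int_{\{F > \eta\lambda\} \cap Q_0} F^q
\]
valid for all $\lambda > \lambda_0$, with constants $A, B$ depending only on $C_0, d, q$.

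Finally, multiply both sides by $\lambda^{p-q-1}$ for $p > q$ and integrate in $\lambda$ over $(\lambda_0, \infty)$. By Fubini and the identity $\int_0^\infty t^{p-q-1} \int_{\{G > t\}} G^q \, dx \, dt = \frac{1}{p-q} \int G^p$, this produces an estimate of the form
\[
\int_{\tilde Q_0} G^p \,\lesssim\, \frac{\eta^{-(p-q)}}{p-q}\Big(A \int_{Q_0} G^p + B \int_{Q_0} F^p \Big) + \lambda_0^p |Q_0|.
\]
The main obstacle is the absorption of the $\int_{Q_0} G^p$ term on the right, whose coefficient $\frac{A\eta^{-(p-q)}}{p-q}$ is a priori larger than~$1$. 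I would handle this by a standard truncation argument, replacing $G$ by $G_T := \min(G, T)$ so that all quantities are a priori finite, combined with a nested-cube iteration exploiting the strict inclusion $\tilde Q_0 \subsetneq Q_0$; the threshold $\eta_0$ in the conclusion is then chosen so that for $p - q \le \eta_0$ the iteration converges (this forces $\eta_0$ to depend on $C_0, d, q$, and on the shrinkage ratio of the nested family). Passing $T \to \infty$ by monotone convergence, and covering $\tfrac{1}{C_0} Q_0$ by finitely many translates of $\tilde Q_0$, yields the claimed estimate.
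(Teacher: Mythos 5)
First, note that the paper does not prove this lemma at all: it is quoted as a mild reformulation of Proposition~5.1 in Giaquinta--Modica, so there is no internal argument to compare against, and your proposal must be judged as a reconstruction of the classical Gehring--Giaquinta--Modica proof. Your general architecture is the right one: a stopping-time/Vitali covering of the super-level sets, application of the hypothesis on the $C_0$-dilates of the stopping cubes, absorption of the perturbative term $\theta(\fint_{C_0Q_j}G^q)^{1/q}\le\theta\lambda$ thanks to the maximal-radius stopping condition (which is indeed how one gets a universal $\theta_0$), and then a layer-cake integration in $\lambda$.

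The final step, however, contains a genuine gap, and it sits exactly at the heart of Gehring's lemma. Your good-$\lambda$ inequality has been degraded twice: on the left you only keep $\lambda^q|\{G>\lambda\}\cap\tilde Q_0|$ instead of $\int_{\{G>\lambda\}\cap\tilde Q_0}G^q$, and on the right you keep $\int_{\{G>\eta\lambda\}\cap Q_0}G^q$ instead of the term $\lambda^{q-1}\int_{\{G>\eta\lambda\}\cap Q_0}G$ that the hypothesis actually provides through its $C_0\fint_Q G$ term. In this form the inequality is empty: by Chebyshev, $\lambda^q|\{G>\lambda\}\cap\tilde Q_0|\le\int_{\{G>\eta\lambda\}\cap Q_0}G^q$ holds for \emph{every} $G\in\Ld^q(Q_0)$, so it cannot encode any improvement of integrability. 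Concretely, after multiplying by $\lambda^{p-q-1}$ and integrating, you face a coefficient $\frac1p$ in front of $\int_{\tilde Q_0}G^p$ against $\frac{A\eta^{-(p-q)}}{p-q}$ in front of $\int_{Q_0}G^p$; since the latter is $\ge1$ and blows up as $p\downarrow q$, no truncation or nested-cube iteration can absorb it (any absorption scheme needs a coefficient $<1$ in front of the same quantity), and shrinking $\eta_0$ only makes things worse. The mechanism that makes Gehring work is the comparison of $\frac1{p-q}$ against $\frac1{p-1}$: one must keep $\int_{\{G>\lambda\}\cap\tilde Q_0}G^q$ on the left (it integrates to $\simeq\frac1{p-q}\int G^p$) and keep the un-degraded term $\lambda^{q-1}\int_{\{G>\eta\lambda\}\cap Q_0}G$ on the right (it integrates to $\le\frac{\eta^{1-p}}{p-1}\int_{Q_0} G^p$, with a constant that stays bounded as $p\downarrow q$); only then does taking $p-q\le\eta_0$ small allow the dangerous term to be absorbed, and this is the sole place where $\eta_0$ enters. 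Two secondary points: your threshold $\lambda_0\simeq\fint_{Q_0}G+(\fint_{Q_0}F^q)^{1/q}$ is too small to guarantee that the stopping cubes are small enough for their $C_0$-dilates to remain in $Q_0$ (one needs $\lambda_0\gtrsim_{C_0,d}(\fint_{Q_0}G^q)^{1/q}$ at that stage), and the stated conclusion with only the average $\fint_{Q_0}G$ on the right is then recovered at the very end by interpolating $\|G\|_{\Ld^q}$ between $\Ld^1$ and $\Ld^p$ and performing one more absorption over nested cubes --- a step your sketch does not address.
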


\subsection{Proof of Theorem~\ref{th:meyers-ann}}
Starting point is the following deterministic perturbative result, for which an argument is postponed to Section~\ref{sec:proof-Meyers}.

\begin{prop}\label{prop:Meyers}
Given a deterministic inclusion set $\Ic$ satisfying the hardcore and regularity conditions in~\ref{Hd},
there exists a constant $C_0\simeq1$ such that the following hold.
\begin{enumerate}[(i)]
\item \emph{Meyers-type $\Ld^p$ estimate:}\\
Given $g\in C^\infty_c(\R^d)^{d\times d}$, the solution $(\nabla u_g,P_g)$ of the steady Stokes problem~\eqref{eq:test-v0}
satisfies for all $2\le p\le2+\frac1{C_0}$,
\[\qquad\|[\nabla u]_2\|_{\Ld^p(\R^d)}\,\lesssim\,\|[g]_2\|_{\Ld^p(\R^d)}.\]
\item \emph{Reverse Jensen's inequality:}\\
For any ball $D\subset\R^d$, if $(w,Q)$ satisfies the following equations in $D$,
\begin{equation*}
\qquad\left\{\begin{array}{ll}
-\triangle w+\nabla Q=0,&\text{in $D\setminus\Ic$},\\
\Div(w)=0,&\text{in $D\setminus\Ic$},\\
\D(w)=0,&\text{in $\Ic$},\\
\int_{\partial I_{n}}\sigma(w,Q)\nu=0,&\forall n:\,I_{n}\subset D,\\
\int_{\partial I_{n}}\Theta(x-x_n)\cdot\sigma(w,Q)\nu=0,&\forall n:\,I_{n}\subset D,\,\forall\Theta\in\Md^\Skew,
\end{array}\right.
\end{equation*}
then there holds for all $q\le p$ with $|p-2|,|q-2|\le\frac1{C_0}$,
\[\qquad\Big(\fint_{\frac1CD}[\nabla w]_2^p\Big)^\frac1p\,\lesssim\,\Big(\fint_{D}[\nabla w]_2^q\Big)^\frac1q.\qedhere\]
\end{enumerate}
\end{prop}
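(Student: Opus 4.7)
My plan is to treat (ii) as the workhorse---a self-improvement of integrability for homogeneous Stokes solutions with rigid inclusions, via a Caccioppoli-type estimate followed by Gehring's lemma---and then to deduce (i) from (ii) by a localization argument combined with Lemma~\ref{lem:Shen}. For the Caccioppoli step in~(ii), given a homogeneous solution $(w,Q)$ in $D$, I would pick a rigid motion $R$ with $\fint_D(w-R)=0$ and test the whole-space weak formulation (the analogue of~\eqref{e:up-whole0} with $g=0$) against $\eta^2(w-R)$, where $\eta$ is a Lipschitz cutoff equal to $1$ on $D/2$, supported in $D$, with $|\nabla\eta|\lesssim 1/r_D$, and \emph{constant on each particle $I_n$} intersecting its support. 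Such a cutoff is constructible thanks to the hardcore assumption~\ref{Hd}: start from a smooth cutoff and extend it to each nearby particle by a single constant value, using the uniform $\delta$-separation of inclusions to preserve the Lipschitz bound. Since $w-R$ is rigid on each particle and $\eta$ is constant there, $\eta^2(w-R)$ is itself a rigid motion on every particle where it is supported, so the force and torque balances from~\eqref{eq:test-v0} annihilate the contributions at the particle boundaries in the integration by parts. The pressure term is absorbed via Young's inequality and the localized pressure estimate of Lemma~\ref{lem:pres} with $g=0$, which bounds $\|Q-c\|_{\Ld^2(D\setminus\Ic)}\lesssim\|\nabla w\|_{\Ld^2(D\setminus\Ic)}$ for a suitable constant $c$. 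The outcome is
\[
\fint_{D/2}|\nabla w|^2\,\lesssim\,\frac{1}{r_D^2}\fint_D|w-R|^2.
\]

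Combining this with the Sobolev--Poincar\'e inequality
\[
\fint_D|w-R|^2\,\lesssim\,r_D^2\left(\fint_D|\nabla w|^{2_*}\right)^{2/2_*},\qquad 2_*:=\tfrac{2d}{d+2},
\]
yields a reverse H\"older estimate for $|\nabla w|$ on $D$. Translating to the local quadratic averages $[\nabla w]_2$ by Fubini (the unit-scale convolution is harmless whenever $r_D\gtrsim 1$ and absorbed by monotonicity otherwise), Gehring's lemma~\ref{lem:Gehring} upgrades integrability from exponent $2$ to $2+\tfrac1{C_0}$ for some $C_0\simeq 1$. The lower range $2-\tfrac1{C_0}\le q\le 2$ in the conclusion of~(ii) is then recovered by Jensen's inequality: Gehring's output controls $(\fint_{D/C}[\nabla w]_2^p)^{1/p}$ by the $\Ld^1$ average $\fint_D[\nabla w]_2$, which is itself dominated by $(\fint_D[\nabla w]_2^q)^{1/q}$ for any $q\ge 1$.

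For part~(i), I would apply Lemma~\ref{lem:Shen} with $F=[\nabla u_g]_2$, $G=[g]_2$, $p_0=2$, and $p_1=2+\tfrac1{2C_0}$ from~(ii). For each ball $D$, split $u_g=v_D+w_D$, where $v_D$ is the whole-space Stokes solution driven by the localized forcing $g\,\mathds1_{C_0D}$. The energy inequality~\eqref{eq:energy} bounds $\|\nabla v_D\|_{\Ld^2(\R^d)}\le\|g\,\mathds1_{C_0D}\|_{\Ld^2(\R^d)}$, which supplies the required $\Ld^2$ control of $F_{D,0}:=[\nabla v_D]_2$ on $D$ in terms of $G$ on $C_0D$. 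The remainder $w_D=u_g-v_D$ solves the homogeneous Stokes problem in $D$ (since $g-g\,\mathds1_{C_0D}\equiv 0$ there), so~(ii) supplies the required reverse H\"older inequality on $F_{D,1}:=[\nabla w_D]_2$, and Lemma~\ref{lem:Shen} concludes.

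The main technical obstacle is the Caccioppoli step of~(ii): producing a cutoff that is simultaneously a smooth approximation of $\mathds1_{D/2}$ at scale $r_D$ and constant on every particle intersecting its support, while absorbing the pressure contribution uniformly in the (a priori unbounded) number of particles. The hardcore assumption~\ref{Hd} enables the cutoff construction, and the uniform bound of Lemma~\ref{lem:pres}---derived via a careful Bogovskii-type modification to preserve constancy of test fields on each inclusion---controls the pressure without generating factors that blow up with the particle count.
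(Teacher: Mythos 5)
Your treatment of part~(ii) follows the paper's argument: a Caccioppoli inequality obtained by testing the whole-space weak formulation with a cutoff that is constant on the particles (so that the force and torque balances annihilate the boundary terms), the pressure controlled by Lemma~\ref{lem:pres}, then Sobolev--Poincar\'e and Gehring. One correction, though: the intermediate inequality you claim, $\fint_{D/2}|\nabla w|^2\lesssim r_D^{-2}\fint_D|w-R|^2$, is stronger than what this argument yields. After Young's inequality the pressure enters through a term $\frac1{K^2}\int_D|Q-c|^2\mathds1_{\R^d\setminus\Ic}$, and Lemma~\ref{lem:pres} converts it into $\frac1{K^2}\int_D|\nabla w|^2$ \emph{on the larger ball} $D$, which cannot be absorbed into the left-hand side $\int_{D/2}|\nabla w|^2$. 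The correct outcome is the three-term inequality of the paper, cf.~\eqref{eq:Cacc-Sob}, carrying an extra $\frac1K\big(\fint_D|\nabla w|^2\big)^{1/2}$ on the right; this is exactly why Lemma~\ref{lem:Gehring} is stated with the additional $\theta\big(\fint_Q G^q\big)^{1/q}$ term. Since you invoke that version of Gehring's lemma, the argument still closes, so this is a misstatement of an intermediate step rather than a fatal gap --- but the clean Caccioppoli inequality as you wrote it is not achievable by this method.

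For part~(i) you take a genuinely different route from the paper. The paper deduces~(i) directly from the local Gehring estimate~\eqref{eq:impr-integr}: by Jensen and the energy inequality~\eqref{eq:energy} the sublinear term is bounded by $|D|^{\frac1p-\frac12}\|g\|_{\Ld^2(\R^d)}$, which vanishes as $D\uparrow\R^d$ since $p>2$. Your alternative --- localizing the forcing to $g\mathds1_{C_0D}$, using the energy inequality for the local piece and~(ii) for the homogeneous remainder, and concluding with Shen's Lemma~\ref{lem:Shen} --- is also valid (modulo the routine qualitative check that $[\nabla u_g]_2\in\Ld^{p_1}(\R^d)$, which is needed to apply Shen's lemma and follows from the $\Ld^2$ bound together with local higher integrability), and it is in fact the exact deterministic template of the paper's proof of the annealed Theorem~\ref{th:meyers-ann}. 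The paper's route is shorter; yours has the merit of making the subsequent annealed argument a near-verbatim repetition.
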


We may now proceed with the proof of Theorem~\ref{th:meyers-ann}, which follows from the above together with Shen's dual version of the Calder\'on--Zygmund lemma, cf.~Lemma~\ref{lem:Shen}.

\begin{proof}[Proof of Theorem~\ref{th:meyers-ann}]
We split the proof into three steps.
We start with estimates outside the particles: first for $2\le q<p$, and then for $p<q\le2$ by a duality argument, so that the full range of exponents is finally reached by interpolation. Next, we extend the estimates inside the particles. 
Let $C_0\ge1$ be fixed as in the statement of Proposition~\ref{prop:Meyers}.

\medskip
\step1 Proof that for all $2\le q<p< 2+\frac1{C_0}$,
\begin{equation}\label{eq:CZ-0}
\|[\nabla u_g]_2\|_{\Ld^p(\R^d;\Ld^q(\Omega))}\,\lesssim\,\|[g]_2\|_{\Ld^p(\R^d;\Ld^q(\Omega))}.
\end{equation}
Let $2\le p_0\le p_1\le2+\frac1{C_0}$ be fixed. For balls $D\subset\R^d$, we decompose
\[\nabla u_g=\nabla u_{D,0}+\nabla u_{D,1},\]
where $\nabla u_{D,0}\in \Ld^\infty(\Omega;\Ld^2(\R^d)^{d\times d})$ denotes the unique solution of
\begin{equation*}
\left\{\begin{array}{ll}
-\triangle u_{D,0}+\nabla P_{D,0}=\Div (g\mathds1_{D}),&\text{in $\R^d\setminus\Ic$},\\
\Div (u_{D,0})=0,&\text{in $\R^d\setminus\Ic$},\\
\D(u_{D,0})=0,&\text{in $\Ic$},\\
\int_{\partial I_{n}}\big(g\mathds1_D+\sigma(u_{D,0},P_{D,0})\big)\nu=0,&\forall n,\\
\int_{\partial I_{n}}\Theta(x-x_n)\cdot\big(g\mathds1_D+\sigma(u_{D,0},P_{D,0})\big)\nu=0,&\forall n,\,\forall\Theta\in\Md^\Skew.
\end{array}\right.
\end{equation*}
On the one hand, for balls $D$ with radius $r_D>1$, Proposition~\ref{prop:Meyers}(i) applied to the above equation yields
\[\int_{D}\expec{[\nabla u_{D,0}]_2^{p_0}}\,\le\,\E\bigg[{\int_{\R^d}[\nabla u_{D,0}]_2^{p_0}}\bigg]\,\lesssim\,\E\bigg[{\int_{\R^d}[g\mathds1_D]_2^{p_0}}\bigg]\,\le\,\int_{2D}\expec{[g]_2^{p_0}},\]
while for balls $D$ with radius $r_D<1$ we appeal to the plain energy inequality~\eqref{eq:energy} in form of
\[\int_{D}\expec{[\nabla u_{D,0}]_2^{p_0}}\,\le\,|D|\,\E\bigg[\Big(\int_{\R^d}|\nabla u_{D,0}|^2\Big)^\frac{p_0}2\bigg]\,\lesssim\,|D|\,\E\bigg[\Big(\int_D|g|^2\Big)^\frac{p_0}2\bigg]\,\lesssim\,\int_{2D}\expec{[g]_2^{p_0}}.\]
On the other hand, noting that $\nabla u_{D,1}=\nabla u_g-\nabla u_{D,0}$ satisfies \begin{equation*}
\left\{\begin{array}{ll}
-\triangle u_{D,1}+\nabla P_{D,1}=0,&\text{in $D\setminus\Ic$},\\
\Div (u_{D,1})=0,&\text{in $D\setminus\Ic$},\\
\D(u_{D,1})=0,&\text{in $\Ic$},\\
\int_{\partial I_{n}}\sigma(u_{D,1},P_{D,1})\nu=0,&\forall n:\,I_{n}\subset D,\\
\int_{\partial I_{n}}\Theta(x-x_n)\cdot\sigma(u_{D,1},P_{D,1})\nu=0,&\forall n:\,I_{n}\subset D,\,\forall\Theta\in\Md^\Skew,
\end{array}\right.
\end{equation*}
it follows from the triangle inequality in $\Ld^{\frac{p_1}{p_0}}$ and from Proposition~\ref{prop:Meyers}(ii) that
\begin{eqnarray*}
\Big(\fint_{\frac1CD}\expec{[\nabla u_{D,1}]_2^{p_0}}^{\frac{p_1}{p_0}}\Big)^\frac1{p_1}&\le&\E\bigg[{\Big(\fint_{\frac1CD}[\nabla u_{D,1}]_2^{p_1}\Big)^\frac{p_0}{p_1}}\bigg]^\frac1{p_0}\\
&\lesssim&\E\bigg[{\fint_{D}[\nabla u_{D,1}]_2^{p_0}}\bigg]^\frac1{p_0}=\Big(\fint_{D}\expec{[\nabla u_{D,1}]_2^{p_0}}\Big)^\frac1{p_0}.
\end{eqnarray*}
In view of these estimates, appealing to Lemma~\ref{lem:Shen} with
\begin{gather*}
F:=\expecm{[\nabla u_g]_2^{p_0}}^{\frac1{p_0}},\quad G:=\expec{[g]_2^{p_0}}^\frac1{p_0},\\
F_{D,0}:=\expecm{[\nabla u_{D,0}]_2^{p_0}}^{\frac1{p_0}},\quad F_{D,1}:=\expecm{[\nabla u_{D,1}]_2^{p_0}}^{\frac1{p_0}},
\end{gather*}
we deduce for all $p_0<p<p_1$,
\[\Big(\int_{\R^d}\expecm{[\nabla u_g]_2^{p_0}}^{\frac{p}{p_0}}\Big)^\frac1q\,\lesssim\,\Big(\int_{\R^d}\expec{[g]_2^{p_0}}^\frac p{p_0}\Big)^\frac1q,\]
and the claim~\eqref{eq:CZ-0} follows.

\medskip
\step2 Duality and interpolation: proof that for all $2-\frac1{2C_0}<p<q\le2$,
\begin{equation}\label{eq:dual-CZ-0}
\|[\mathds1_{\R^d\setminus\Ic}\nabla u]_2\|_{\Ld^{p}(\R^d;\Ld^{q}(\Omega))}
\,\lesssim\,\|[g]_2\|_{\Ld^{p}(\R^d;\Ld^{q}(\Omega))}.
\end{equation}
Combining this with~\eqref{eq:CZ-0}, we then deduce by interpolation that the same estimate holds for all $p,q$ with $|p-2|,|q-2|<\frac1{8C_0}$.

\medskip\noindent
Given a test function $h\in C^\infty_c(\R^d;\Ld^\infty(\Omega)^{d\times d})$, we consider the solution $(\nabla u_h,P_h)$ of the steady Stokes problem~\eqref{eq:test-v0} with $g$ replaced by $h$.
In view of~\eqref{e:up-whole0}, there holds in the weak sense in $\R^d$,
\begin{eqnarray*}
-\triangle u_g+\nabla(P_g\mathds1_{\R^d\setminus\Ic})&=&\nabla\cdot (g\mathds1_{\R^d\setminus\Ic})-\sum_n\delta_{\partial I_{n}}\big(g+\sigma(u_g,P_g)\big)\nu,\\
-\triangle u_h+\nabla(P_h\mathds1_{\R^d\setminus\Ic})&=&\nabla\cdot (h\mathds1_{\R^d\setminus\Ic})-\sum_n\delta_{\partial I_{n}}\big(h+\sigma(u_h,P_h)\big)\nu.
\end{eqnarray*}
Testing the equation for $u_h$ with $u_g$, and vice versa, and noting that the boundary terms all vanish in view of the respective boundary conditions, we find
\begin{equation*}
\int_{\R^d\setminus\Ic}h:\nabla u_g=-\int_{\R^d}\nabla u_h:\nabla u_g=\int_{\R^d\setminus\Ic}g:\nabla u_h.
\end{equation*}
Combined with a duality argument, this identity yields
\begin{eqnarray*}
\lefteqn{\|[\mathds1_{\R^d\setminus\Ic}\nabla u_g]_2\|_{\Ld^{p}(\R^d;\Ld^{q}(\Omega))}}\\
&\lesssim&\sup\bigg\{\E\bigg[{\int_{\R^d\setminus\Ic}h:\nabla u_g}\bigg]~:~\|[h]_2\|_{\Ld^{p'}(\R^d;\Ld^{q'}(\Omega))}=1\bigg\}\\
&=&\sup\bigg\{\E\bigg[{\int_{\R^d\setminus\Ic}g:\nabla u_h}\bigg]~:~\|[h]_2\|_{\Ld^{p'}(\R^d;\Ld^{q'}(\Omega))}=1\bigg\}\\
&\le&\|[g]_2\|_{\Ld^{q}(\R^d;\Ld^{p}(\Omega))}\sup\bigg\{\|[\nabla u_h]_2\|_{\Ld^{p'}(\R^d;\Ld^{q'}(\Omega))}~:~\|[h]_2\|_{\Ld^{p'}(\R^d;\Ld^{q'}(\Omega))}=1\bigg\}.
\end{eqnarray*}
Given $2-\frac1{2C_0}<p<q\le2$, we may appeal to~\eqref{eq:CZ-0} with $2\le q'<p'<2+\frac1{C_0}$, and the claim~\eqref{eq:dual-CZ-0} follows.

\medskip
\step3 Conclusion.\\
In view of Step~2, it remains to show that for all $p,q\ge1$,
\begin{equation}\label{eq:CZ-inballs}
\|[\mathds1_{\Ic}\nabla u_g]_2\|_{\Ld^p(\R^d;\Ld^q(\Omega))}\,\lesssim\,\|[\mathds1_{\R^d\setminus\Ic}\nabla u_g]_2\|_{\Ld^p(\R^d;\Ld^q(\Omega))}.
\end{equation}
For all $n$, since $u$ is affine in $I_{n}$, we can write for any constant $c_{n}\in\R^d$,
\[\|\nabla u_g\|_{\Ld^\infty(I_{n})}\,\lesssim\,\|u_g-c_{n}\|_{\Ld^1(\partial I_{n})}.\]
By a trace estimate and by Poincar\'e's inequality with the choice $c_{n}:=\fint_{(I_{n}+\delta B)\setminus I_{n}}u_g$, we deduce
\[\|\nabla u_g\|_{\Ld^\infty(I_{n})}\,\lesssim\,\|u_g-c_{n}\|_{W^{1,1}((I_{n}+\delta B)\setminus I_{n})}\,\lesssim\,\|\nabla u_g\|_{\Ld^1((I_{n}+\delta B)\setminus I_{n})}.\]
We may then estimate pointwise,
\[\mathds1_{\Ic}|\nabla u_g|\,\lesssim\,\sum_n\mathds1_{I_{n}}\|\nabla u_g\|_{\Ld^1((I_{n}+\delta B)\setminus I_{n})},\]
and the claim~\eqref{eq:CZ-inballs} now follows from the hardcore condition in~\ref{Hd}.
\end{proof}

\subsection{Proof of Proposition~\ref{prop:Meyers}}\label{sec:proof-Meyers}
We split the proof into two steps. We start with a Meyers-type perturbative argument based on Caccioppoli's inequality and Gehring's lemma, and we conclude in the second step.

\medskip
\step1 Meyers-type perturbative argument: there exists $C_0\ge1$ (only depending on $d,\delta$) such that for all balls $D\subset\R^d$ and $2\le p\le 2+\frac1{C_0}$,
\begin{equation}\label{eq:impr-integr}
\Big(\fint_{D}[\nabla u_g]_2^p\Big)^\frac1p
\,\lesssim\, \Big(\fint_{CD}[\nabla u_g]_2^\frac{2d}{d+2}\Big)^\frac{d+2}{2d}+\Big(\fint_{CD}[g]_2^p\Big)^\frac1p.
\end{equation}
Given a ball $D\subset\R^d$ with radius $r_D\ge3$, choose a cut-off function $\chi_D$ with $\chi_D|_D\equiv 1$, $\chi_D|_{\R^d\setminus2D}\equiv 0$, and $|\nabla\chi_D|\lesssim\frac1{r_D}$, such that $\chi_D$ is constant in $I_{n}$ for all $n$.
Given arbitrary constants $c_{D}\in\R^d$ and $c_{D}'\in\R$, testing the equation~\eqref{e:up-whole0} for $u_g$ with $\chi_D^2(u_g-c_{D})$,
noting that the boundary terms all vanish, and recalling that $\Div (u_g)=0$, we obtain the following Caccioppoli-type inequality,
\begin{multline*}
\int_{D}|\nabla u_g|^2
\,\lesssim\,\frac{1}{r_D^2}\int_{2D}|u_g-c_{D}|^2+\int_{2D}|g|^2\\
+\Big(\frac{1}{r_D^2}\int_{2D}|u_g-c_{D}|^2\Big)^\frac12\Big(\int_{2D}|P_g-c_{D}'|^2\mathds1_{\R^d\setminus\Ic}\Big)^\frac12.
\end{multline*}
Hence, for all $K\ge1$,
\begin{equation*}
\int_{D}|\nabla u_g|^2
\,\lesssim\,\frac{K^2}{r_D^2}\int_{2D}|u_g-c_{D}|^2+\int_{2D}|g|^2+\frac1{K^2}\int_{2D}|P_g-c_{D}'|^2\mathds1_{\R^d\setminus\Ic}.
\end{equation*}
Using the the Poincar\'e--Sobolev inequality to estimate the first right-hand side term, with the choice $c_{D}:=\fint_{2D}u_g$, and using the localized pressure estimate of Lemma~\ref{lem:pres} to estimate the last right-hand side term, with the choice $c_{D}':=\fint_{2D\setminus\Ic}P$, we deduce
\begin{equation}\label{eq:Cacc-Sob}
\Big(\fint_{D}|\nabla u_g|^2\Big)^\frac12
\,\lesssim\, K\Big(\fint_{2D}|\nabla u_g|^\frac{2d}{d+2}\Big)^\frac{d+2}{2d}+\Big(\fint_{2D}|g|^2\Big)^\frac12+\frac1K\Big(\fint_{2D}|\nabla u_g|^2\Big)^\frac12.
\end{equation}
While this is proven here for all balls $D$ with radius $r_D\ge3$, taking local quadratic averages allows us to infer for all balls $D$ (with any radius $r_D>0$) and all $K\ge1$,
\begin{equation*}
\Big(\fint_{D}[\nabla u_g]_2^2\Big)^\frac12
\,\lesssim\, K\Big(\fint_{2D}[\nabla u_g]_2^\frac{2d}{d+2}\Big)^\frac{d+2}{2d}+\Big(\fint_{2D}[g]_2^2\Big)^\frac12+\frac1K\Big(\fint_{2D}[\nabla u_g]_2^2\Big)^\frac12.
\end{equation*}
Choosing $K$ large enough, the claim~\eqref{eq:impr-integr} now follows from Gehring's lemma in form of Lemma~\ref{lem:Gehring}.

\medskip
\step2 Conclusion.\\
We start with the proof of~(i).
Applying~\eqref{eq:impr-integr} together with Jensen's inequality and with the energy inequality~\eqref{eq:energy}, we find for all $2\le p\le2+\frac1{C_0}$,
\begin{eqnarray*}
\Big(\int_{D}[\nabla u_g]_2^p\Big)^\frac1p
&\lesssim& |D|^{\frac1p-\frac12}\Big(\int_{CD}[\nabla u_g]_2^2\Big)^\frac{1}{2}+\Big(\int_{CD}[g]_2^p\Big)^\frac1p\\
&\lesssim& |D|^{\frac1p-\frac12}\Big(\int_{\R^d}|g|^2\Big)^\frac{1}{2}+\Big(\int_{CD}[g]_2^p\Big)^\frac1p,
\end{eqnarray*}
hence the conclusion~(i) follows for $D\uparrow\R^d$.
Next, item~(ii) is a consequence of~\eqref{eq:impr-integr} with~$g=0$ in~$CD$.
\qed

\section{Corrector estimates}
This section is devoted to the proof of Theorem~\ref{th:cor}.
Next to the corrector $\psi_E$, we further introduce an associated flux corrector~$\zeta_E$,
which is key to put the equation for two-scale expansion errors into a more favorable form, cf.~\eqref{eq:error-wPeps+}.
As in~\cite[Theorem~4]{D-20}, motivated by the work of Jikov on homogenization problems with stiff inclusions~\cite{Jikov-87,Jikov-90} (see also~\cite[Section~3.2]{JKO94}), we start by defining a divergence-free extension $J_E$ of the flux $\sigma(\psi_E+Ex,\Sigma_E)\mathds1_{\R^d\setminus\Ic}$.
Although this extension is not unique, we can choose it as in~\cite{D-20} to coincide with the flux in the corresponding incompressible linear elasticity problem in the limit of inclusions with diverging shear modulus.
The flux corrector $\zeta_E$ is then defined as a vector potential for this extended flux $J_E$; more precisely, equation~\eqref{eq:def-zeta} below amounts to choosing the Coulomb gauge. The construction is recalled for convenience in Section~\ref{sec:pr-ext-cor}.

\begin{lem}[Extended fluxes and flux correctors; \cite{D-20}]\label{lem:ext-cor}
Under Assumption~\ref{Hd}, for all $E\in\Md_0$, there is a stationary random $2$-tensor field $J_E:=\{J_{E;ij}\}_{1\le i,j\le d}$ with finite second moment such that almost surely,
\begin{equation}\label{eq:def-flux}
J_E\mathds1_{\R^d\setminus\Ic}\,=\,\sigma(\psi_E+Ex,\Sigma_E)\mathds1_{\R^d\setminus\Ic},\qquad \Div (J_E)=0.
\end{equation}
In these terms, there exists a unique random $3$-tensor field $\zeta_E=\{\zeta_{E;ijk}\}_{1\le i,j,k\le d}$ that satisfies the following infinite-volume problem:
\begin{enumerate}[\quad$\bullet$]
\item For all $i,j,k$, almost surely, $\zeta_{E;ijk}$ belongs to $H^1_\loc(\R^d)$ and satisfies in the weak sense,
\begin{equation}\label{eq:def-zeta}
-\triangle \zeta_{E,ijk}\,=\,\partial_jJ_{E,ik}-\partial_kJ_{E,ij}.
\end{equation}
\item The random field $\nabla\zeta_E$ is stationary, has vanishing expectation, has finite second moment, and $\zeta_E$ satisfies the anchoring condition $\fint_B\zeta_E=0$ almost surely.
\end{enumerate}
In addition, the following properties are automatically satisfied:
\begin{enumerate}[(i)]
\item $\zeta_E$ is skew-symmetric in its last two indices, that is, $\zeta_{E,ijk}\,=\,-\zeta_{E,ikj}$ for all $i,j,k$;
\smallskip\item $\zeta_E$ is a vector potential for $J_E$, that is,
\[\quad\Div (\zeta_{E,i})=J_{E,i}-\expec{J_{E,i}},\]
in terms of $\zeta_{E,i}=\{\zeta_{E,ijk}\}_{1\le j,k\le d}$ and $J_{E,i}=\{J_{E,ij}\}_{1\le j\le d}$;
\smallskip\item $\zeta_E$ is sublinear at infinity, that is, $\e\zeta_E(\frac\cdot\e)\cvf{}0$ in $H^{1}_\loc(\R^d)$ almost surely as $\e\downarrow0$;
\smallskip\item $\expec{J_E}=2\Bb E+(\bb:E)\Id$, where we recall that the effective constants $\Bb$ and $\bb$ are defined in~\eqref{eq:def-B} and~\eqref{eq:def-b}.
\qedhere
\end{enumerate}
\end{lem}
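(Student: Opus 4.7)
The plan is to construct the extended flux $J_E$ by extending $\sigma(\psi_E+Ex,\Sigma_E)$ from $\R^d\setminus\Ic$ inside each inclusion $I_n$ through a local auxiliary boundary value problem, and then to obtain $\zeta_E$ as the unique stationary solution of the prescribed Poisson equation via standard potential theory for stationary random fields.

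For the extension, I would fix $n$ and solve on $I_n$ an auxiliary incompressible linear elasticity problem (a ``Stokes-type'' Neumann problem) for an unknown pair $(w_E^n, q_E^n)$, with boundary data $\sigma(w_E^n,q_E^n)\nu = \sigma(\psi_E+Ex,\Sigma_E)\nu$ on $\partial I_n$, and then set $J_E := \sigma(w_E^n, q_E^n)$ inside $I_n$ and $J_E := \sigma(\psi_E+Ex,\Sigma_E)$ on $\R^d\setminus\Ic$. The solvability of this Neumann problem requires that the Neumann data be $L^2$-orthogonal to all rigid motions on $\partial I_n$, which is precisely the content of the force and torque balances in \eqref{eq:corr-Stokes}. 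The uniform $C^2$-regularity and hardcore condition in \ref{Hd} give uniform-in-$n$ elliptic and trace estimates, so that $\|J_E\|_{L^2(I_n)} \lesssim \|\nabla\psi_E+E\|_{L^2(I_n^+ \setminus I_n)} + \|\Sigma_E\|_{L^2(I_n^+\setminus I_n)}$. Summing over $n$ using hardcore disjointness and stationarity of $(\nabla\psi_E,\Sigma_E\mathds{1}_{\R^d\setminus\Ic})$ yields finite second moment and stationarity of $J_E$. The global identity $\Div J_E = 0$ on $\R^d$ follows by integration by parts against smooth test functions, the boundary traces from both sides of $\partial I_n$ matching by construction, and the bulk equation $\Div\sigma(\psi_E+Ex,\Sigma_E)=0$ holding pointwise in $\R^d\setminus\Ic$.

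For the flux corrector, I would solve $-\triangle \zeta_{E,ijk} = \partial_j J_{E,ik} - \partial_k J_{E,ij}$ component by component. Since the right-hand side is the derivative of the stationary mean-centered $L^2$ field $J_E - \E[J_E]$, standard Helmholtz decomposition for stationary random fields (as in \cite[Ch.~7]{JKO94}) yields a unique stationary $\zeta_E$ with mean-zero $L^2$ gradient, satisfying the anchoring condition. Property (i) is immediate from the antisymmetry of the source in $(j,k)$. For (ii), define $F_{ij} := \partial_k\zeta_{E,ijk} - J_{E,ij} + \E[J_{E,ij}]$ and compute $-\triangle F_{ij} = \partial_k\partial_j J_{E,ik} - \partial_k\partial_k J_{E,ij} + \triangle J_{E,ij} = \partial_j\Div(J_{E,i}) = 0$; $F_{ij}$ is then a harmonic stationary field with mean-zero stationary gradient, hence constant, and centering forces $F_{ij}\equiv 0$. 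Property (iii) is the usual sublinearity for stationary fields with $L^2$, mean-zero, stationary gradient, obtained via the ergodic theorem. For (iv), I would decompose $\E[J_E]=2\alpha E + \beta(E)\Id + \gamma(E)$ with $\gamma(E)$ symmetric trace-free, identify the trace-free symmetric part by testing $\Div J_E = 0$ against $\psi_{E'}+E'x$ for $E'\in\Md_0^\Sym$ and integrating by parts (boundary terms vanish thanks to force/torque balance and rigidity), recovering exactly the definition \eqref{eq:def-B} of $\Bb$; the scalar (trace) part is obtained by taking the trace and using the divergence-free extension $J_E$ together with a Stokes-formula identity $\int_{I_n}\Tr J_E = \int_{\partial I_n}(x-x_n)\cdot J_E\nu$, which yields the definition \eqref{eq:def-b} of $\bb$.

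The main obstacle is the construction of the local extension in a way that is simultaneously symmetric (so that the test-function argument later works against symmetric gradients), divergence-free (so $\Div J_E = 0$ globally), stationary, and uniformly $L^2$-bounded. A naive zero-extension would fail the divergence-free property, and purely scalar extensions do not respect the symmetry; the Jikov-style extension via the formal stiff-limit of an incompressible elasticity problem is exactly what reconciles these constraints and allows the Neumann compatibility to follow from the force and torque balances. A secondary technical point is getting uniform $n$-independent constants in the elliptic estimates, which relies crucially on the uniform interior/exterior ball condition in \ref{Hd}.
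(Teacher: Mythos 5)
Your proposal is correct and follows essentially the same route as the paper: the extension of the flux inside each $I_n$ via an auxiliary incompressible Stokes--Neumann problem whose compatibility is guaranteed by the force and torque balances (the Jikov-type construction), uniform estimates from the $C^2$ and hardcore conditions, and the flux corrector from standard stationary potential theory, with (i)--(iii) obtained exactly as you describe and (iv) deferred in the paper to a computation in~\cite{D-20}. The only slip is cosmetic: the local estimate should involve the fattened annulus $(I_n+\delta B)\setminus I_n$ rather than $I_n^+\setminus I_n$, which is empty for convex inclusions.
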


With the above definition, we shall establish the following version of Theorem~\ref{th:cor} for the extended corrector $(\psi_E,\zeta_E)$; the proof is postponed to Section~\ref{sec:fluc-str}.

\begin{theor}[Extended corrector estimate]\label{th:cor+}
Under Assumptions~\ref{Hd} and~\ref{Mix+}, for all $E\in\Md_0$ and $q<\infty$,
\begin{equation}\label{eq:bnd-grad-cor1}
\|[(\nabla\psi_E,\Sigma_E\mathds1_{\R^d\setminus\Ic},\nabla\zeta_E)]_2\|_{\Ld^q(\Omega)}\,\lesssim_{q}\,|E|,
\end{equation}
and
\begin{equation}\label{eq:bnd-cor1}
\|[(\psi_E,\zeta_E)]_2(x)\|_{\Ld^q(\Omega)}\,\lesssim_q\,|E|\,\mu_d(|x|),
\end{equation}
where we recall that $\mu_d$ is defined in~\eqref{eq:bnd-cor}.
\end{theor}

\subsection{Proof of Lemma~\ref{lem:ext-cor}}\label{sec:pr-ext-cor}
Let $E\in\Md_0$.
We split the proof into two main steps.

\medskip
\step1 Construction of the extended flux $J_E$.\\
Given a realization of the set of inclusions, we consider for all $n$ the weak solution $(\psi_E^n,\Sigma_E^n)$ in $H^1(I_n)^d\times\Ld^2(I_n)$ of the following Neumann problem in~$I_n$,
\begin{equation}\label{eq:sol-psiEn}
\left\{\begin{array}{ll}
-\triangle\psi_E^{n}+\nabla\Sigma_E^{n}=0,&\text{in $I_n$},\\
\Div(\psi_E^{n})=0,&\text{in $I_n$},\\
\sigma(\psi_E^{n},\Sigma_E^{n})\nu=\sigma(\psi_E+Ex,\Sigma_E)\nu,&\text{on $\partial I_n$}.
\end{array}\right.
\end{equation}
Note that $\psi_E^{n}$ is defined only up to a rigid motion, which is fixed by choosing $\fint_{I_n}\psi_E^{n}=0$ and $\fint_{I_n}\nabla\psi_E^{n}\in\Md^\Sym_0$, and we prove that $(\psi_E^n,\Sigma_E^n)$ satisfies
\begin{equation}\label{eq:bnd-psinSign}
\|(\nabla\psi_E^n,\Sigma_E^n)\|_{\Ld^2(I_n)} \, \lesssim \,\|\sigma(\psi_E+Ex,\Sigma_E)\|_{\Ld^2((I_n+\delta B)\setminus I_n)}.
\end{equation}

\medskip
\substep{1.1} Well-posedness of the Neumann problem~\eqref{eq:sol-psiEn} for $\psi_E^n$.\\
The weak formulation of~\eqref{eq:sol-psiEn} takes on the following guise: $\psi_E^n$ is divergence-free and satisfies for all divergence-free test functions $\phi\in H^1(I_n)^d$,
\begin{equation}\label{eq:funct-psin}
2\int_{I_n}\D(\phi):\D(\psi_E^{n})=\Lc_E(\phi),
\end{equation}
in terms of the linear functional
\[\Lc_E(\phi)\,:=\,\int_{\partial I_n}\phi\cdot\sigma(\psi_E+Ex,\Sigma_E)\nu.\]
In view of the boundary conditions for $\psi_E$, we can rewrite for any $V\in\R^d$ and $\Theta\in\Md^\Skew$,
\[\Lc_E(\phi)\,=\,\int_{\partial I_n}(\phi-V-\Theta(\cdot-x_n))\cdot\sigma(\psi_E+Ex,\Sigma_E)\nu.\]
Choose an extension map
\[T_n:\big\{\phi\in H^1(I_n)^d:\Div(\phi)=0\big\}\to\big\{\phi\in H^1_0(I_n+\delta B)^d:\Div(\phi)=0\big\},\]
such that $T_n[\phi]|_{I_n}=\phi|_{I_n}$ and
\[\|T_n[\phi]\|_{H^1(I_n+\delta B)}\,\lesssim\,\|\phi\|_{H^1(I_n)}.\]
In these terms, using Stokes' formula and recalling that $\sigma(\psi_E+Ex,\Sigma_E)$ is symmetric and divergence-free, we can further rewrite
\begin{eqnarray*}
\Lc_E(\phi)
&=&-\int_{(I_n+\delta B)\setminus I_n}\Div\Big(\sigma(\psi_E+Ex,\Sigma_E)\,T_n[\phi-V-\Theta(\cdot-x_n)]\Big)\nonumber\\
&=&
-\int_{(I_n+\delta B)\setminus I_n}\D(T_n[\phi-V-\Theta(\cdot-x_n)]):\sigma(\psi_E+Ex,\Sigma_E),
\end{eqnarray*}
and thus, since $\D(T_n[\phi-V-\Theta(\cdot-x_n)])$ is trace-free,
\begin{equation}\label{eq:extension-eqn}
\Lc_E(\phi)
\,=\,-2\int_{(I_n+\delta B)\setminus I_n}\D(T_n[\phi-V-\Theta(\cdot-x_n)]):(\D(\psi_E)+E).
\end{equation}
We deduce that $\phi\mapsto\Lc_E(\phi)$ is a continuous linear functional on $\{\phi\in H^1(I_n)^d:\Div(\phi)=0\}$. In addition, for all divergence-free $\phi\in H^1(I_n)^d$,
minimizing over $V,\Theta$ and appealing to Korn's inequality,
we find
\begin{eqnarray}\label{eq:bnd-LE}
|\Lc_E(\phi)|
&\lesssim&\inf_{V\in\R^d,\,\Theta\in\Md^\Skew}\|\phi-V-\Theta(\cdot-x_n)\|_{H^1(I_n)}\|\!\D(\psi_E)+E\|_{\Ld^2((I_n+\delta B)\setminus I_n)}\nonumber\\
&\lesssim&\|\!\D(\phi)\|_{\Ld^2(I_n)}\|\!\D(\psi_E)+E\|_{\Ld^2((I_n+\delta B)\setminus I_n)}.
\end{eqnarray}
By the Lax-Milgram theorem, we deduce that there exists a unique trace-free gradient-like solution $\D(\psi_E^{n})\in \Ld^2(I_n)^{d\times d}_\Sym$ of~\eqref{eq:funct-psin},
and it satisfies
\begin{equation*}
\|\!\D(\psi_E^{n})\|_{\Ld^2(I_n)}\,\lesssim\,\|\!\D(\psi_E)+E\|_{\Ld^2((I_n+\delta B)\setminus I_n)}.
\end{equation*}
The vector field $\psi_E^{n}$ is itself defined only up to a rigid motion and is fixed by choosing $\fint_{I_n}\psi_E^{n}=0$ and $\fint_{I_n}\nabla\psi_E^{n}\in\Md_0^\Sym$, in which case the above becomes by Korn's inequality,
\begin{equation}\label{eq:nabpsiEn}
\|\nabla\psi_E^{n}\|_{\Ld^2(I_n)}\,\lesssim\,\|\!\D(\psi_E)+E\|_{\Ld^2((I_n+\delta B)\setminus I_n)}.
\end{equation}

\medskip
\substep{1.2} Construction of the pressure.\\
Consider the extended deformation
\[q_E^n\,:=\,\D(\psi_E)+E+D(\psi_E^n)\mathds1_{I_n},\qquad\text{in $I_n+\delta B$}.\]
In view of~\eqref{eq:extension-eqn}, the weak formulation~\eqref{eq:funct-psin} yields for all divergence-free test functions $\phi\in C^\infty_c(I_n+\delta B)^d$,
\[2\int_{\R^d}\D(\phi):q_E^n\,=\,0.\]
Appealing e.g.~to~\cite[Proposition~12.10]{JKO94}, we deduce that there exists an associated pressure field $\Sigma_E^n\in\Ld^2_\loc(I_n+\delta B)$, which is unique up to an additive constant, such that for all test functions~$\phi\in C^\infty_c(I_n+\delta B)^d$,
\begin{equation}\label{eq:qtildeS}
\int_{\R^d}\D(\phi):(2q_E^n-\Sigma_E^n)\,=\,0.
\end{equation}
Since for all $\phi\in C^\infty_c((I_n+\delta B)\setminus I_n)^d$ we have
\[\int_{\R^d}\D(\phi):(2q_E^n -\Sigma_E^n)\,=\,\int_{\R^d}\D(\phi):\big(2(\D(\psi_E)+E)-\Sigma_E^n\big)\,=\,0,\]
we deduce that $\Sigma_E^n$ can be chosen uniquely to coincide with $\Sigma_E$ on $(I_n+\delta B)\setminus I_n$.
The pair $(\psi_E^n,\Sigma_E^n)$ is then the unique weak solution of the Neumann problem~\eqref{eq:sol-psiEn} with $\fint_{I_n}\psi_E^{n}=0$ and $\fint_{I_n}\nabla\psi_E^{n}\in\Md^\Sym_0$.

\medskip\noindent
It remains to prove~\eqref{eq:bnd-psinSign}. The estimation of $\nabla\psi_E^n$ follows from~\eqref{eq:nabpsiEn} and it remains to estimate the pressure $\Sigma_E^n$. For that purpose, using that $\Sigma_E^n$ coincides with $\Sigma_E$ on $(I_n+\delta B)\setminus I_n$, we split
\begin{eqnarray*}
\|\Sigma_E^n\|_{\Ld^2(I_n)}
&\le&\Big\|\Sigma_E^n-\fint_{I_n+\delta B}\Sigma_E^n\Big\|_{\Ld^2(I_n+\delta B)}+\Big|\fint_{I_n+\delta B}\Sigma_E^n\Big|\\
&\le&\Big\|\Sigma_E^n-\fint_{I_n+\delta B}\Sigma_E^n\Big\|_{\Ld^2(I_n+\delta B)}+\Big|\fint_{(I_n+\delta B)\setminus I_n}\Sigma_E\Big|\\
&&\hspace{4cm}+\Big|\fint_{(I_n+\delta B)\setminus I_n}\Big(\Sigma_E^n-\fint_{I_n+\delta B}\Sigma_E^n\Big)\Big|\\
&\lesssim&\Big\|\Sigma_E^n-\fint_{I_n+\delta B}\Sigma_E^n\Big\|_{\Ld^2(I_n+\delta B)}+\Big|\fint_{(I_n+\delta B)\setminus I_n}\Sigma_E\Big|
\end{eqnarray*}
Starting from~\eqref{eq:qtildeS}, a standard argument based on the Bogovskii operator yields
\begin{equation*}
\Big\|\Sigma_E^n-\fint_{I_n+\delta B}\Sigma_E^n\Big\|_{\Ld^2(I_n+\delta B)}\,\lesssim\,\|q_E^n\|_{\Ld^2(I_n+\delta B)},
\end{equation*}
so that the above becomes
\[\|\Sigma_E^n\|_{\Ld^2(I_n)}\,\lesssim\,\|q_E^n\|_{\Ld^2(I_n+\delta B)}+\|\Sigma_E\|_{\Ld^2((I_n+\delta B)\setminus I_n)},\]
and the claim~\eqref{eq:bnd-psinSign} follows from~\eqref{eq:nabpsiEn}.

\medskip\noindent
\substep{1.3} Construction of the extended flux.\\
We define the extended deformation and the extended pressure,
\[\tilde q_E\,:=\,\D(\psi_E)+E+\sum_n\D(\psi_E^n)\mathds1_{I_n},\qquad \tilde\Sigma_E\,:=\,\Sigma_E\mathds1_{\R^d\setminus\Ic}+\sum_n \Sigma_E^n\mathds1_{I_n},\]
as well as the corresponding extended flux
\begin{equation}\label{eq:def-J}
J_E\,:=\,2\tilde q_E-\tilde\Sigma_E\,=\,\sigma(\psi_E+Ex,\Sigma_E)\mathds1_{\R^d\setminus\Ic}+\sum_n\sigma(\psi_E^{n},\Sigma_E^{n})\mathds1_{I_n}.
\end{equation}
In view of~\eqref{eq:qtildeS}, together with~\eqref{eq:psiE-ref0}, the pair $(\tilde q_E,\tilde \Sigma_E)$ satisfies for all test functions $\phi\in C^\infty_c(\R^d)^d$,
\begin{equation}\label{eq:qtildeS2}
\int_{\R^d}\D(\phi):(2\tilde q_E-\tilde\Sigma_E)\,=\,0,
\end{equation}
that is, $J_E$ is divergence-free.
The uniqueness of the extensions ensures that $\tilde q_E$ and $\tilde\Sigma_E$ are both stationary, and we now prove that they have finite second moments.
Combining the definition of $\tilde q_E$ with the estimate~\eqref{eq:nabpsiEn} on $\psi_E^n$, we find for all $R>0$,
\[\|\tilde q_E\|_{\Ld^2(B_R)}\,\lesssim\,\|\!\D(\psi_E)+E\|_{\Ld^2(B_{R+3})},\]
and thus, by stationarity, letting $R\uparrow\infty$, and using the $\Ld^2$ estimate on $\psi_E$, cf.~Lemma~\ref{lem:cor},
\[\|\tilde q_E\|_{\Ld^2(\Omega)}\,\lesssim\,\|\!\D(\psi_E)+E\|_{\Ld^2(\Omega)}\,\lesssim\,|E|.\]
For the pressure, starting from~\eqref{eq:qtildeS2}, a standard argument based on the Bogovskii operator yields for all~$R>0$,
\begin{equation*}
\Big\|\tilde\Sigma_E-\fint_{B_R}\tilde\Sigma_E\Big\|_{\Ld^2(B_R)}\,\lesssim\,\|\tilde q_E\|_{\Ld^2(B_R)},
\end{equation*}
and thus, by stationarity, letting $R\uparrow\infty$ and using the above $\Ld^2$ estimate on $\tilde q_E$,
\begin{equation*}
\|\tilde\Sigma_E-\expecmm{\tilde\Sigma_E}\|_{\Ld^2(\Omega)}\,\lesssim\,\|\tilde q_E\|_{\Ld^2(\Omega)}\,\lesssim\,|E|.
\end{equation*}
We conclude that $\|J_E\|_{\Ld^2(\Omega)}\lesssim|E|$.
The identity in item~(iv) for the expectation $\expec{J_E}$ follows from a direct computation, cf.~\cite[Lemma~4.2]{D-20}, and is not repeated here.

\medskip
\step2 Construction of the flux corrector $\zeta_E$.\\
In view of standard stationary calculus, e.g.~\cite[Section~7]{JKO94} (see also~\cite[Proof of Lemma~1]{GNO-reg}),
equation~\eqref{eq:def-zeta} admits a unique stationary gradient solution $\nabla\zeta_{E}\in\Ld^2_\loc(\R^d;\Ld^2(\Omega)^{d\times d})$ with vanishing expectation and with
\[\|\nabla\zeta_E\|_{\Ld^2(\Omega)}\,\lesssim\,\|J_E\|_{\Ld^2(\Omega)}\,\lesssim\,|E|.\]
Items~(i) and~(ii) are easy consequences of the definition of $\zeta_E$. As in Lemma~\ref{lem:cor}, the additional sublinearity statement~(iii) is a standard result for random fields having a stationary gradient with vanishing expectation, cf.~e.g.~\cite[Section~7]{JKO94}.
\qed

\subsection{Proof of Theorem~\ref{th:cor+}}\label{sec:fluc-str}
We start with the following estimate on the optimal CLT decay for large-scale averages of the extended corrector gradient $(\nabla\psi_E,\nabla\zeta_E)$ and of the pressure~$\Sigma_E$.
Due to the nonlinearity of the corrector equation with respect to randomness, local norms of $(\nabla\psi_E,\Sigma_E)$ also appear in the right-hand side of \eqref{eq:CLTscaling}, which is a common difficulty in stochastic homogenization; this will be subsequently absorbed by a buckling argument, taking advantage of the CLT scaling.
\begin{prop}[CLT scaling]\label{prop:CLTscaling}
Under Assumptions~\ref{Hd} and~\ref{Mix+}, for all $g\in C^\infty_c(\R^d)$, $E\in\Md_0$, $R,s\ge1$, and~$1\ll q<\infty$, we have
\begin{align}\label{eq:CLTscaling}
&\Big\|\int_{\R^d}g\,(\nabla \psi_E,\Sigma_E\mathds1_{\R^d\setminus\Ic},\nabla\zeta_E)\Big\|_{\Ld^{2q}(\Omega)}\\
&\hspace{3cm}\,\lesssim_q\,\|g\|_{\Ld^2(\R^d)}\bigg(|E|+\Big\|\Big(\int_{B_{R}}[(\nabla\psi_E,\Sigma_E\mathds1_{\R^d\setminus\Ic})]_2^{2s}\Big)^\frac1{2s}\Big\|_{\Ld^{2q}(\Omega)}\bigg).\nonumber\qedhere
\end{align}
\end{prop}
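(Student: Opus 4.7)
The plan is to apply the multiscale variance inequality of Assumption~\ref{Mix+} to the random variable
\[
Y(\Ic)\,:=\,\int_{\R^d}g\cdot(\nabla\psi_E,\Sigma_E\mathds1_{\R^d\setminus\Ic},\nabla\zeta_E),
\]
and to upgrade the resulting variance bound to an $\Ld^{2q}(\Omega)$ estimate via the standard higher-moment version of the multiscale inequality (available for weights $\pi$ with superalgebraic decay, cf.~the framework of~\cite{DG1,DG2}). The core task is then to bound the oscillation $\partial^{\operatorname{osc}}_{\Ic,B_\ell(x)}Y$ by a deterministic quantity localized in a fattening of $B_\ell(x)$.

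For the oscillation, I would pick an admissible perturbation $\Ic'$ agreeing with $\Ic$ outside $B_\ell(x)$, and write the difference of $Y(\Ic)$ and $Y(\Ic')$ as $\int g\cdot (\nabla(\psi_E-\psi_E'),(\Sigma_E\mathds1-\Sigma_E'\mathds1),\nabla(\zeta_E-\zeta_E'))$. Using Lemma~\ref{lem:reform-whole} to rewrite the corrector equations on the whole space, and using the divergence-free extended flux $J_E$ from Lemma~\ref{lem:ext-cor} to put the $\nabla\zeta_E$ contribution into divergence form, I would then introduce the solution $(u_g,P_g)$ of the dual Stokes problem~\eqref{eq:test-v0} with forcing tied to $g$. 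Integration by parts, together with the fact that $(\psi_E-\psi_E',\Sigma_E-\Sigma_E')$ solves a Stokes system on $\R^d$ whose source is supported in $B_\ell(x)$ (this is where the hardcore/regularity of~\ref{Hd} is used to compare inclusions in the two configurations), reduces the whole integral to a bilinear expression localized in a ball $B_{C\ell}(x)$. Combining the deterministic energy estimate for the difference with the pressure control of Lemma~\ref{lem:pres}, H\"older's inequality yields
\[
\big|\partial^{\operatorname{osc}}_{\Ic,B_\ell(x)}Y\big|\,\lesssim\,\Big(\tfrac{1}{\ell^d}\!\!\int_{B_{C\ell}(x)}[(\nabla\psi_E,\Sigma_E\mathds1_{\R^d\setminus\Ic})]_2^{2s}\Big)^{\!\frac1{2s}}\Big(\ell^d\!\!\int_{B_{C\ell}(x)}[(\nabla u_g,P_g)]_2^{2s'}\Big)^{\!\frac1{2s'}},
\]
with dual exponents $2s,2s'$ and the unit-scale smoothing $[\cdot]_2$ accounting for boundary-trace terms.

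Plugging this into the (higher-moment) multiscale inequality, applying H\"older in $\omega$ to pull the corrector factor out in $\Ld^{2q}(\Omega)$, and controlling the dual factor by the annealed $\Ld^p$ regularity of Theorem~\ref{th:ann-reg} (with a tiny integrability loss, which is why $s$ appears in the statement) yields, after summing over the scales $\ell$ using the superalgebraic decay of $\pi$,
\[
\|Y\|_{\Ld^{2q}(\Omega)}\,\lesssim_q\,\|g\|_{\Ld^2(\R^d)}\,\Big\|\Big(\tfrac1{|B_R|}\int_{B_R}[(\nabla\psi_E,\Sigma_E\mathds1_{\R^d\setminus\Ic})]_2^{2s}\Big)^{\!\frac1{2s}}\Big\|_{\Ld^{2q}(\Omega)}.
\]
Scales $\ell\le R$ produce the explicit local corrector norm on the right-hand side, while the contribution of scales $\ell\gtrsim R$ can be absorbed into the $|E|$ term using stationarity and the vanishing expectation of $(\nabla\psi_E,\Sigma_E\mathds1,\nabla\zeta_E)$ from Lemmas~\ref{lem:cor}--\ref{lem:ext-cor}.

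The main obstacle is the self-referential structure: the sensitivity computation inevitably produces local $\Ld^{2s}$ norms of $(\nabla\psi_E,\Sigma_E)$ on the right-hand side, so the CLT-scaling bound is not closed in itself. This is precisely why the proposition is stated with the local corrector norm on the right, to be absorbed later by a buckling argument (combining this CLT estimate with the annealed $\Ld^p$ regularity of Theorem~\ref{th:meyers-ann} to bootstrap from finite second moments, established via the energy estimate, to all finite moments). A secondary technical difficulty is making the duality clean at the particle boundaries: it is essential to use the extended flux $J_E$ (rather than just $\sigma(\psi_E+Ex,\Sigma_E)\mathds1_{\R^d\setminus\Ic}$) so that the boundary contributions from the rigid particles cancel in the integration by parts, exactly as in the computation~\eqref{ag+1} of Lemma~\ref{lem:reform-whole}.
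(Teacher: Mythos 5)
Your overall strategy is the paper's: apply the higher-moment version of the multiscale variance inequality (Lemma~\ref{lem:p}), bound the oscillation by comparing the correctors for $\Ic$ and a perturbed configuration $\Ic'$, use the whole-space reformulation and the extended flux $J_E$ to make the integrations by parts at particle boundaries clean, introduce the dual Stokes solution $u_g$, localize via the energy estimate for the difference and the pressure bound of Lemma~\ref{lem:pres}, and close with annealed bounds on the dual quantities plus the superalgebraic decay of $\pi$, accepting that the local corrector norm on the right-hand side will be removed later by buckling. That is exactly the architecture of the paper's proof.

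There is, however, one genuine logical problem as written: you control the dual factor by \emph{Theorem~\ref{th:ann-reg}}. The non-perturbative part of that theorem is proved only later, via the large-scale regularity theory, whose minimal radius $r_*$ is controlled by the corrector estimates of Theorem~\ref{th:cor} --- which in turn rest on the very proposition you are proving. The argument must instead invoke the \emph{perturbative} Meyers-type annealed estimate of Theorem~\ref{th:meyers-ann}, which is established independently under Assumption~\ref{Hd} alone; this is precisely why the statement is restricted to $1\ll q<\infty$ (so that after H\"older in $\Omega$ the dual factor is measured in $\Ld^{2q'}(\Omega)$ with $|2q'-2|\ll1$), a restriction your sketch neither uses nor explains. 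Relatedly, the exponent $s$ does not come from an integrability loss in the annealed estimate; in the paper it enters through an $\ell^{2s}$--$\ell^\infty$ bound on $\sup_{B_R}M_\ell$ after smuggling in a spatial average at scale $R$. Finally, a single dual field $u_g$ does not suffice to test all three components: the sensitivity of $\Sigma_E\mathds1_{\R^d\setminus\Ic}$ requires a Bogovskii-type field $s_g$ with $\Div(s_g)=g\mathds1_{\R^d\setminus\Ic}$ constant on the inclusions, the contribution inside the inclusions requires a divergence-free tensor $h_g$ matching $\fint_{I_n}g$ there, and the flux corrector requires the Helmholtz dual $v_g$ composed with these; each needs its own annealed bound (uniform in the randomness of the test data). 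These constructions are standard but not automatic, and they are where the hardcore and regularity hypotheses of~\ref{Hd} actually enter.
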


In order to get such a control on stochastic moments, we appeal to the following consequence of the multiscale variance inequality~\eqref{eq:SGL} in~\ref{Mix+}, cf.~\cite[Proposition~1.10(ii)]{DG1}.

\begin{lem}[Control of higher moments; \cite{DG1}]\label{lem:p}
If the inclusion process $\Ic$ satisfies the multiscale variance inequality~\eqref{eq:SGL} with some weight $\pi$, then we have for all $q<\infty$ and all $\sigma(\Ic)$-measurable random variables~$Y(\Ic)$
with $\expec{Y(\Ic)}=0$,
\begin{equation}\label{eq:SGL-p}
\|Y(\Ic)\|_{\Ld^{2q}(\Omega)}^2\,\lesssim \, q^2\,\E\bigg[{\int_0^\infty \bigg(\int_{\R^d}\Big(\partial^{\operatorname{osc}}_{\Ic,B_\ell(x)}Y(\Ic)\Big)^2dx\bigg)^q \langle\ell\rangle^{-dq}\,\pi(\ell)\,d\ell}\bigg]^\frac1{q}.
\qedhere
\end{equation}
\end{lem}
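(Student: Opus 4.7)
The plan is to bootstrap the given second-moment variance inequality \eqref{eq:SGL} into a $(2q)$-th moment inequality by applying it to the auxiliary random variable $|Y|^q$ and running a buckling argument, closed by induction on $q$. This is the multiscale analogue of the classical passage from a spectral-gap inequality to $\Ld^p$-concentration.

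The first ingredient is a pointwise chain rule for the oscillation. Fix $x\in\R^d$ and $\ell\ge0$, and let $Y_\pm$ denote the essential sup and inf of $Y(\Ic')$ over realizations $\Ic'$ coinciding with $\Ic$ outside $B_\ell(x)$, so that $\partial^{\operatorname{osc}}_{B_\ell(x)}Y=Y_+-Y_-$. Treating separately the three cases $Y_-\ge0$, $Y_+\le0$, and $Y_-<0<Y_+$, and using the mean value theorem for $t\mapsto |t|^q$, one checks
\[
\partial^{\operatorname{osc}}_{B_\ell(x)}|Y|^q\,\le\,q\,M_{x,\ell}^{q-1}\,\partial^{\operatorname{osc}}_{B_\ell(x)}Y,\qquad M_{x,\ell}:=|Y_+|\vee|Y_-|\,\le\,|Y|+\partial^{\operatorname{osc}}_{B_\ell(x)}Y.
\]

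Next, I would apply \eqref{eq:SGL} to $|Y|^q-\expec{|Y|^q}$ and insert the chain rule, obtaining
\[
\var{|Y|^q}\,\le\,q^2\,\expecM{\int_0^\infty\!\!\int_{\R^d}M_{x,\ell}^{2(q-1)}\big(\partial^{\operatorname{osc}}_{B_\ell(x)}Y\big)^2\,dx\,\langle\ell\rangle^{-d}\pi(\ell)\,d\ell}.
\]
Splitting $M_{x,\ell}^{2(q-1)}\lesssim_q |Y|^{2(q-1)}+(\partial^{\operatorname{osc}}_{B_\ell(x)}Y)^{2(q-1)}$ and applying H\"older's inequality in $\Omega$ with conjugate exponents $q/(q-1)$ and $q$, this is controlled by $q^2\,\|Y\|_{\Ld^{2q}(\Omega)}^{2(q-1)}A^{1/q}$ plus a residual term, where
\[
A\,:=\,\expecM{\Big(\int_0^\infty\!\!\int_{\R^d}(\partial^{\operatorname{osc}}_{B_\ell(x)}Y)^2\,dx\,\langle\ell\rangle^{-d}\pi(\ell)\,d\ell\Big)^q}
\]
is the target quantity on the right-hand side of \eqref{eq:SGL-p}. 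The residual, coming from the $(\partial^{\operatorname{osc}}Y)^{2(q-1)}$ piece, is absorbed into $A$ by inserting a crude pointwise majorant on the oscillation (bounded by the global essential sup of $|Y|$, itself controlled by $\|Y\|_{\Ld^{2q}(\Omega)}$ up to constants).

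The final step closes the buckling: writing $\expec{|Y|^{2q}}=\var{|Y|^q}+\expec{|Y|^q}^2$ and proceeding by induction along the dyadic sequence $q=1,2,4,\ldots$ (the base case $q=1$ being \eqref{eq:SGL} itself, and the hypothesis $\expec{Y}=0$ entering here), the term $\expec{|Y|^q}^2$ is controlled by the previously established lower moment, and Young's inequality absorbs the prefactor $\|Y\|_{\Ld^{2q}(\Omega)}^{2(q-1)}$ from the right. This yields $\|Y\|_{\Ld^{2q}(\Omega)}^{2q}\lesssim q^{2q}A$, which is \eqref{eq:SGL-p}; non-dyadic $q$ is reached by H\"older interpolation in the probability space. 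The main technical obstacle is precisely this buckling step: without the zero-mean condition and the dyadic induction, $\expec{|Y|^q}^2$ would only be bounded by $\expec{|Y|^{2q}}$ via Jensen, leaving no room for absorption and breaking the argument.
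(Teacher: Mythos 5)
Your overall strategy (apply \eqref{eq:SGL} to $|Y|^q$, chain rule for the oscillation, H\"older in $\Omega$, then buckle by absorbing $\|Y\|_{\Ld^{2q}(\Omega)}^{2(q-1)}$ via Young and a dyadic induction with base case $\var{Y}$) is indeed the strategy behind this result; note the paper does not prove the lemma itself but quotes it from~\cite[Proposition~1.10(ii)]{DG1}. Your chain rule $\partial^{\operatorname{osc}}_{\Ic,B_\ell(x)}|Y|^q\le q\,M_{x,\ell}^{q-1}\partial^{\operatorname{osc}}_{\Ic,B_\ell(x)}Y$ with $M_{x,\ell}\le |Y|+\partial^{\operatorname{osc}}_{\Ic,B_\ell(x)}Y$ and the H\"older treatment of the main term are fine. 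The genuine gap is the residual term $\E\big[\int_0^\infty\int_{\R^d}(\partial^{\operatorname{osc}}_{\Ic,B_\ell(x)}Y)^{2q}dx\,\langle\ell\rangle^{-d}\pi(\ell)d\ell\big]$: you dispose of it by bounding the oscillation by the global essential supremum of $|Y|$, ``itself controlled by $\|Y\|_{\Ld^{2q}(\Omega)}$ up to constants''. No such bound exists: $\supess_\Omega|Y|$ is not controlled by any finite moment of $Y$ (the lemma must apply to unbounded $Y$), and even for bounded $Y$ the implied constant would depend on $Y$ itself, making the final inequality circular. Since your H\"older step is taken in $\Omega$ only, the residual genuinely involves the $\Ld^{2q}_x$-norm of $x\mapsto\partial^{\operatorname{osc}}_{\Ic,B_\ell(x)}Y$, which is not dominated by the $\Ld^2_x$-quantity entering $A$ without a further idea.

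The repair is spatial, not an $\Ld^\infty(\Omega)$ bound: the oscillation is monotone under enlargement of the ball, so $\partial^{\operatorname{osc}}_{\Ic,B_\ell(x)}Y\le\partial^{\operatorname{osc}}_{\Ic,B_{2\ell+1}(x')}Y$ whenever $|x-x'|\le\ell\vee1$, whence $\sup_x(\partial^{\operatorname{osc}}_{\Ic,B_\ell(x)}Y)^2\lesssim\langle\ell\rangle^{-d}\int_{\R^d}(\partial^{\operatorname{osc}}_{\Ic,B_{2\ell+1}(x)}Y)^2dx$ and therefore $\int_{\R^d}(\partial^{\operatorname{osc}}_{\Ic,B_\ell(\cdot)}Y)^{2q}\lesssim\langle\ell\rangle^{-d(q-1)}\big(\int_{\R^d}(\partial^{\operatorname{osc}}_{\Ic,B_{2\ell+1}(\cdot)}Y)^2\big)^q$. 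Inserted into the residual and after a change of variables in $\ell$ (harmless dilation of the weight, constants of the admissible form $C^q$), this yields precisely the $\langle\ell\rangle^{-dq}$-weighted functional on the right of \eqref{eq:SGL-p}, and your buckling then closes; this covering mechanism is also why the $\langle\ell\rangle^{-dq}$ weight appears in the statement. Two smaller points: the absorption presupposes $\|Y\|_{\Ld^{2q}(\Omega)}<\infty$, so one should first argue for truncations of $Y$; and your reduction of non-dyadic $q$ ``by H\"older interpolation'' goes the wrong way (it bounds the left-hand side by a \emph{higher} moment of the oscillation functional) --- instead run the buckling directly for arbitrary real $q\ge1$, treating $\expec{|Y|^q}^2$ by $\Ld^2$--$\Ld^{2q}$ interpolation and Young, with $\var{Y}$ and the assumption $\expec{Y}=0$ as base.
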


Next, in preparation for the buckling argument, we show how  to bound local norms of $(\nabla\psi_E,\Sigma_E\mathds1_{\R^d\setminus\Ic})$ as appearing in the right-hand side of~\eqref{eq:CLTscaling} by corresponding large-scale averages.
This statement is inspired by~\cite{JO} in the context of homogenization for divergence-form linear elliptic equations.

\begin{prop}\label{prop:interpol}
Choose \mbox{$\chi\in C^\infty_c(B)$} with $\int_B\chi=1$, and set $\chi_r(x):=r^{-d}\chi(\frac xr)$.
Under Assumption~\ref{Hd}, for all $E\in\Md_0$, $1\ll_\chi r\ll_\chi R$, and $q,s\ge1$ with $|s-1|\ll1$,
\[\bigg\|\Big(\fint_{B_{R}}[(\nabla\psi_E,\Sigma_E\mathds1_{\R^d\setminus\Ic})]_2^{2s}\Big)^\frac1{2s}\bigg\|_{\Ld^{2q}(\Omega)}\,\lesssim_\chi\, |E|+\Big\|\int_{\R^d}\chi_r\,(\nabla\psi_E,\Sigma_E\mathds1_{\R^d\setminus\Ic})\Big\|_{\Ld^{2q}(\Omega)}.\qedhere\]
\end{prop}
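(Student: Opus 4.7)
The plan is to decompose $(\nabla\psi_E,\Sigma_E\mathds1_{\R^d\setminus\Ic})$ into a low-frequency part mollified by $\chi_r$ and a high-frequency remainder, and estimate each separately. Define
\[\bar\psi := \chi_r*\psi_E,\qquad \bar\Sigma := \chi_r*(\Sigma_E\mathds1_{\R^d\setminus\Ic}),\qquad w:=\psi_E-\bar\psi,\qquad Q:=\Sigma_E\mathds1_{\R^d\setminus\Ic}-\bar\Sigma,\]
so that $(\nabla\bar\psi,\bar\Sigma)$ captures the spatial mean at scale~$r$ while $(\nabla w,Q)$ captures the fluctuation.

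For the smoothed part, by joint stationarity of $\nabla\psi_E$ and $\Sigma_E\mathds1_{\R^d\setminus\Ic}$ together with translation-invariance of the convolution, the random vector $(\nabla\bar\psi(x),\bar\Sigma(x))$ has, for every $x\in\R^d$, the same distribution (up to a harmless reflection of $\chi_r$) as $\int_{\R^d}\chi_r(\nabla\psi_E,\Sigma_E\mathds1_{\R^d\setminus\Ic})$. Since $|s-1|\ll 1$ and $q\ge 1$ yield $q\ge s$, Minkowski's inequality gives
\[\bigg\|\Big(\fint_{B_R}[(\nabla\bar\psi,\bar\Sigma)]_2^{2s}\Big)^{1/(2s)}\bigg\|_{\Ld^{2q}(\Omega)}\,\lesssim_\chi\,\bigg\|\int_{\R^d}\chi_r(\nabla\psi_E,\Sigma_E\mathds1_{\R^d\setminus\Ic})\bigg\|_{\Ld^{2q}(\Omega)},\]
which accounts for the second right-hand side term of the claim.

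For the fluctuation part, the aim is to bound $\|(\fint_{B_R}[(\nabla w,Q)]_2^{2s})^{1/(2s)}\|_{\Ld^{2q}(\Omega)}$ by $|E|$ via the perturbative annealed Meyers estimate of Theorem~\ref{th:meyers-ann} with $\eta=0$ (which requires only Assumption~\ref{Hd}, and entails no loss of stochastic integrability). Starting from the whole-space formulation in Lemma~\ref{lem:reform-whole},
\[-\triangle\psi_E+\nabla(\Sigma_E\mathds1_{\R^d\setminus\Ic})=-\sum_n\delta_{\partial I_n}\sigma(\psi_E+Ex,\Sigma_E)\nu,\]
I split $\sigma(\psi_E+Ex,\Sigma_E)=\sigma(\psi_E,\Sigma_E)+2E^\Sym$ on each $\partial I_n$ and use the distributional identity $\sum_n\delta_{\partial I_n}E^\Sym\nu=-\Div(E^\Sym\mathds1_\Ic)$ to reabsorb the $E$-contribution as a volumetric forcing supported in $\Ic$. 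Localizing with a cutoff adapted to $B_{CR}$ and subtracting the $\chi_r$-mollified equation then yields, for $(w,Q)$ on $B_R$, a Stokes problem of type~\eqref{eq:test-v0} with source $g$ of size $|E|$ supported in $B_{CR}$, up to commutator errors from the cutoff and the convolution that are themselves controlled by the low-frequency bound via the previous paragraph. Theorem~\ref{th:meyers-ann} then gives $\|[\nabla w]_2\|_{\Ld^{2s}(\R^d;\Ld^{2q}(\Omega))}\lesssim|E|\cdot R^{d/(2s)}$, from which the desired estimate over $B_R$ follows.

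The main obstacle is precisely this recasting of the equation for $w$ as an instance of~\eqref{eq:test-v0}: since $\psi_E$ satisfies $D(\psi_E)=-E^\Sym$ on $\Ic$ rather than the rigidity constraint $D(u)=0$ expected of a generic Stokes solution, an extension or local surgery inside the particles is needed to absorb this mismatch into the forcing $g$ while preserving incompressibility outside and the force and torque balance on each particle, all at cost $|E|$. The condition $r\ll_\chi R$ ensures that the commutator errors produced by the cutoff $\chi_R$ and by the convolution with $\chi_r$ can be absorbed via the low-frequency bound and do not degrade the size $|E|$ of the resulting forcing.
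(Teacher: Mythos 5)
Your treatment of the low-frequency part (stationarity plus Minkowski, giving the term $\|\int\chi_r(\nabla\psi_E,\Sigma_E\mathds1_{\R^d\setminus\Ic})\|_{\Ld^{2q}(\Omega)}$) is fine, but the high-frequency part contains the entire difficulty and is not actually carried out; as described it would fail. The source term you would obtain for $w=\psi_E-\chi_r\ast\psi_E$ is not of size $|E|$: besides the rigidity mismatch $\D(\psi_E)=-E^\Sym$ on $\Ic$ (which you name but do not resolve), the equation for $w$ inherits the commutator $(\delta_0-\chi_r)\ast\sum_n\delta_{\partial I_n}\sigma(\psi_E+Ex,\Sigma_E)\nu$. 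Mollifying a surface measure at scale $r$ does not turn it into the large-scale average $\int\chi_r\nabla\psi_E$; it merely smears the full local information about $(\nabla\psi_E,\Sigma_E)$ near each particle boundary over an $r$-neighborhood, so this forcing is of the same order as $[\nabla\psi_E]_2$ itself, with no small prefactor and hence no way to absorb it. Your claim that these errors ``are controlled by the low-frequency bound'' is exactly the step that does not work. In addition, Theorem~\ref{th:meyers-ann} only applies for stochastic exponents with $|q-2|\le\frac1{C_0}$, whereas the proposition must hold for all $q\ge1$; using the full annealed regularity of Theorem~\ref{th:ann-reg} instead would be circular, since that result rests on the corrector estimates this proposition is designed to produce.

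The paper proceeds quite differently and never writes an equation for $\psi_E-\chi_r\ast\psi_E$. It first proves a deterministic reverse H\"older (Meyers) inequality for $\nabla\psi_E$ itself, via a Caccioppoli inequality for $\psi_E+Ex$ (with the localized pressure estimate of Lemma~\ref{lem:pres}) followed by Gehring's lemma: for $|s-1|\ll1$,
\begin{equation*}
\Big(\fint_{B_R}[\nabla\psi_E]_2^{2s}\Big)^{\frac1s}\,\lesssim\,K^2\Big(1+\frac1{R^2}\fint_{B_{CR}}|\psi_E-c_R|^2\Big)+\frac1{K^2}\fint_{B_{CR}}|\nabla\psi_E|^2.
\end{equation*}
The connection to $\chi_r$ is then made by choosing $c_R:=\fint_{B_{CR}}\chi_r\ast\psi_E$ and applying Poincar\'e's inequality, which yields $\frac1{R^2}\fint_{B_{CR}}|\psi_E-c_R|^2\lesssim_\chi\frac{r^2}{R^2}\fint_{B_{CR}}|\nabla\psi_E|^2+\fint_{B_{CR}}|\chi_r\ast\nabla\psi_E|^2$. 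After taking $\Ld^{2q}(\Omega)$ norms and using stationarity, the terms carrying the prefactor $K\frac rR+\frac1K$ are absorbed into the left-hand side by choosing $K\gg1$ and $R\gg_{K,\chi}r$ --- this absorption is the mechanism that replaces your appeal to annealed Meyers and is what makes the statement valid for every $q\ge1$. The pressure is then reduced to the gradient via Lemma~\ref{lem:pres}. If you want to salvage a decomposition-based proof, you would need to build this kind of smallness (in $r/R$ or $1/K$) into the remainder, which your current setup does not provide.
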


Based on the above two propositions, we are now in position to proceed with the buckling argument and the proof of Theorem~\ref{th:cor+}.

\begin{proof}[Proof of Theorem~\ref{th:cor+}]
Let $E\in\Md_0$ be fixed with $|E|=1$.
We split the proof into three steps:
after some preliminary estimate, we establish the moment bounds~\eqref{eq:bnd-grad-cor1} on $(\nabla\psi_E,\Sigma_E\mathds1_{\R^d\setminus\Ic},\nabla\zeta_E)$ by a buckling argument,
before deducing the corresponding moment bounds~\eqref{eq:bnd-cor1} on $(\psi_E,\zeta_E)$ by integration.

\medskip
\step1 Preliminary: proof that for all $R\ge1$,
\begin{multline}\label{eq:bnd-inter-1.2}
\|[(\nabla\psi_E,\Sigma_E\mathds1_{\R^d\setminus\Ic},\nabla\zeta_E)]_2\|_{\Ld^{2q}(\Omega)}\\
\,\lesssim\,(R^\frac d2)^{1-\frac1q}\bigg\|\Big(\fint_{B_{R}}|(\nabla\psi_E,\Sigma_E\mathds1_{\R^d\setminus\Ic},\nabla\zeta_E)|^2\Big)^\frac12\bigg\|_{\Ld^{2q}(\Omega)}.
\end{multline}
For $R,q\ge1$, in view of local quadratic averages, the discrete $\ell^2-\ell^{2q}$ inequality yields
\begin{eqnarray*}
\lefteqn{\Big(\fint_{B_R(x)}[(\nabla\psi_E,\Sigma_E\mathds1_{\R^d\setminus\Ic},\nabla\zeta_E)]_2^{2q}\Big)^\frac1{2q}}\\
&\lesssim&\bigg(R^{-d}\sum_{z\in B_{2R}(x)\cap\frac1C\Z^d}[(\nabla\psi_E,\Sigma_E\mathds1_{\R^d\setminus\Ic},\nabla\zeta_E)]_2(z)^{2q}\bigg)^\frac1{2q}\\
&\lesssim& (R^\frac d2)^{1-\frac1q}\Big(\fint_{B_{4R}(x)}|(\nabla\psi_E,\Sigma_E\mathds1_{\R^d\setminus\Ic},\nabla\zeta_E)|^2\Big)^\frac12.
\end{eqnarray*}
Taking the $\Ld^{2q}(\Omega)$ norm and using the stationarity of $(\nabla\psi_E,\Sigma_E\mathds1_{\R^d\setminus\Ic},\nabla\zeta_E)$, the claim follows.

\medskip
\step2 Moment bounds~\eqref{eq:bnd-grad-cor1}.\\
Combining the results of Propositions~\ref{prop:CLTscaling} and~\ref{prop:interpol}, 
we find for all $1\ll_\chi r\ll_\chi R$ and $q,s\ge1$ with $1\ll q<\infty$ and $|s-1|\ll1$,
\begin{eqnarray*}
\lefteqn{\bigg\|\Big(\fint_{B_{R}}[(\nabla \psi_E,\Sigma_E\mathds1_{\R^d\setminus\Ic},\nabla\zeta_E)]_2^{2s}\Big)^\frac1{2s}\bigg\|_{\Ld^{2q}(\Omega)}}\\
&\lesssim_\chi&1+\Big\|\int_{\R^d}\chi_r(\nabla\psi_E,\Sigma_E\mathds1_{\R^d\setminus\Ic})\Big\|_{\Ld^{2q}(\Omega)}\\
&\lesssim_{q,\chi}&1+\big(\tfrac Rr\big)^{\frac d2}R^{-\frac d{2s'}}\bigg\|\Big(1+\fint_{B_R}[(\nabla\psi_E,\Sigma_E\mathds1_{\R^d\setminus\Ic})]_2^{2s}\Big)^\frac1{2s}\bigg\|_{\Ld^{2q}(\Omega)}.
\end{eqnarray*}
Letting $1\ll_\chi r\ll_\chi R$ be fixed with $r\simeq_\chi R$, and choosing $R\gg_{q,s,\chi}1$, we deduce
\[\bigg\|\Big(\fint_{B_{R}}[(\nabla \psi_E,\Sigma_E\mathds1_{\R^d\setminus\Ic},\nabla\zeta_E)]_2^{2s}\Big)^\frac1{2s}\bigg\|_{\Ld^{2q}(\Omega)}\lesssim_{q,s} 1.\]
Inserting this into~\eqref{eq:bnd-inter-1.2} together with Jensen's inequality, the conclusion~\eqref{eq:bnd-grad-cor1} follows.

\medskip
\step3 Moment bounds~\eqref{eq:bnd-cor1}.\\
We focus on the bound on $\psi_E$, while the argument for~$\zeta_E$ is similar.
Poincar\'e's inequality in~$B(x)$ gives
\begin{equation}\label{eq:phi-bnd0}
\bigg\|\Big[\psi_E-\fint_{B}\psi_E\Big]_2(x)\bigg\|_{\Ld^{2q}(\Omega)}\,\lesssim\,\|[\nabla\psi_E]_2\|_{\Ld^{2q}(\Omega)}+\Big\|\fint_{B(x)}\psi_E-\fint_{B}\psi_E\Big\|_{\Ld^{2q}(\Omega)},
\end{equation}
and it remains to estimate the second right-hand side term.
For that purpose, we write
\[\fint_{B(x)}\psi_E-\fint_{B}\psi_E=\int_{\R^d}\nabla\psi_E\cdot\nabla h_{x},\]
where $h_x$ denotes the unique decaying solution in $\R^d$ of
\[-\triangle h_x=\tfrac{1}{|B|}(\mathds1_{B(x)}-\mathds1_B).\]
Appealing to Proposition~\ref{prop:CLTscaling} together with the moment bounds~\eqref{eq:bnd-grad-cor1}, we find for all \mbox{$q<\infty$},
\[\Big\|\int_{\R^d}\nabla\psi_E\cdot\nabla h_{x}\Big\|_{\Ld^{2q}(\Omega)}\,\lesssim_q\,\|\nabla h_{x}\|_{\Ld^2(\R^d)}.\]
A direct computation with Green's kernel gives
\[\|\nabla h_{x}\|_{\Ld^2(\R^d)}\,\lesssim\,\mu_d(|x|),\]
and thus
\[\Big\|\fint_{B(x)}\psi_E-\fint_{B}\psi_E\Big\|_{\Ld^{2q}(\Omega)}\,\lesssim_q\,\mu_d(|x|).\]
Inserting this into~\eqref{eq:phi-bnd0}, together with the moment bounds~\eqref{eq:bnd-grad-cor1},
the conclusion~\eqref{eq:bnd-cor1} for~$\psi_E$ follows.
\end{proof}

\subsection{Proof of Proposition~\ref{prop:CLTscaling}}
Let $E\in\Md_0$ be fixed with $|E|=1$.
Applying the version~\eqref{eq:SGL-p} of the multiscale variance inequality~\eqref{eq:SGL} to control higher moments, we find
\begin{align}\label{eq:SG-phi}
&\Big\|\int_{\R^d}g\,(\nabla \psi_E,\Sigma_E\mathds1_{\R^d\setminus\Ic},\nabla\zeta_E)\Big\|_{\Ld^{2q}(\Omega)}^{2}\\
&\quad\,\lesssim_q\,\E\bigg[\int_0^\infty\bigg(\int_{\R^d}\Big(\partial^{\operatorname{osc}}_{\Ic ,B_\ell(x)}\int_{\R^d}g\,(\nabla \psi_E,\Sigma_E\mathds1_{\R^d\setminus\Ic},\nabla\zeta_E)\Big)^2dx\bigg)^q\langle\ell\rangle^{-dq}\,\pi(\ell)\,d\ell\bigg]^\frac1q,\nonumber
\end{align}
and it remains to estimate the oscillation of $\int_{\R^d} g(\nabla \psi_E,\Sigma_E\mathds1_{\R^d\setminus\Ic},\nabla\zeta_E)$ with respect to the inclusion process $\Ic $ on any ball~$B_\ell(x)$.
Given $\ell\ge0$ and $x\in\R^d$, and given a realization of $\Ic$, let $\Ic '$ be a locally finite point set satisfying the hardcore and regularity conditions in~\ref{Hd}, with $\Ic '\cap(\R^d\setminus B_\ell(x))=\Ic \cap(\R^d\setminus B_\ell(x))$, and denote by~$(\nabla\psi'_E,\Sigma_E'\mathds1_{\R^d\setminus\Ic'},\nabla\zeta_E')$ the corresponding extended corrector with $\Ic $ replaced by $\Ic'$ (this is obviously well-defined in $\Ld^2_\loc(\R^d)$ as the perturbation is compactly supported).
We split the proof into nine steps.

\medskip
\step{1} Preliminary: dual test functions and annealed estimates.\\
As we shall abundantly appeal to duality arguments in the proof, this first step is devoted to the construction of a number of useful dual test functions and to the proof of corresponding annealed estimates:
\begin{enumerate}[\quad$\bullet$]
\item Given a test function $g\in \Ld^\infty(\Omega;C^\infty_c(\R^d)^{d\times d})$, we let $\nabla u_g\in \Ld^\infty(\Omega;\Ld^2(\R^d)^{d\times d})$ denote the unique solution of the steady Stokes problem~\eqref{eq:test-v0}, and we recall that 
Theorem~\ref{th:meyers-ann} yields for all $|q-2|\ll1$,
\begin{equation}\label{eq:test-V+}
\quad\|[\nabla u_g]\|_{\Ld^2(\R^d;\Ld^q(\Omega))}\,\lesssim\,\|[g]_2\|_{\Ld^2(\R^d;\Ld^q(\Omega))}.
\end{equation}
\item Given a test function $g\in C^\infty_c(\R^d;\Ld^\infty(\Omega)^{d})$, we let $\nabla v_g\in \Ld^\infty(\Omega;\Ld^2(\R^d)^{d})$ denote the unique solution of
\begin{equation}\label{eq:test-W}
\quad-\triangle v_g=\Div (g),\quad\text{in $\R^d$},
\end{equation}
which satisfies for all $1<q<\infty$,
\begin{equation}\label{eq:test-W+}
\quad\|[\nabla v_g]_2\|_{\Ld^2(\R^d;\Ld^q(\Omega))}\,\lesssim_q\,\|[g]_2\|_{\Ld^2(\R^d;\Ld^q(\Omega))}.
\end{equation}
\item Given $g\in C^\infty_c(\R^d;\Ld^\infty(\Omega))$, there exists a vector field $s_g\in\Ld^\infty(\Omega;\dot H^1(\R^d)^d)$ such that~$s_g|_{I_n}$ is constant for all $n$, and such that for all $1<q<\infty$,
\begin{gather}
\quad\Div (s_g)= g\mathds1_{\R^d\setminus\Ic},\quad\text{in $\R^d$},\label{eq:test-R}\\
\quad\|[\nabla s_g]_2\|_{\Ld^2(\R^d;\Ld^q(\Omega))}\,\lesssim\,\|[g]_2\|_{\Ld^2(\R^d;\Ld^q(\Omega))},\nonumber
\end{gather}
\item Given $g\in C^\infty_c(\R^d;\Ld^\infty(\Omega)^{d\times d})$, there exists a $2$-tensor field $h_g\in\Ld^\infty(\Omega;H^1(\R^d)^{d\times d})$ such that $h_g|_{I_n}=\fint_{I_n}g$ for all $n$, and such that for all $1<q<\infty$,
\begin{gather}
\Div (h_g)=0,\quad\text{in $\R^d$},\label{eq:test-H}\\
\|[ h_g]_2\|_{\Ld^2(\R^d;\Ld^q(\Omega))}\,\lesssim\,\|[g]_2\|_{\Ld^2(\R^d;\Ld^q(\Omega))}.\nonumber
\end{gather}
\end{enumerate}

\medskip\noindent
The existence and uniqueness of $\nabla v_g$ is clear, and the annealed bound~\eqref{eq:test-W+} follows from Banach-valued Fourier multiplier theorems, e.g.\@ in form of the extrapolation result in~\cite[Theorem~3.15]{Kunstmann-Weis-04}.

\medskip\noindent
We turn to the construction of $s_g$.
First denote by $s_g^\circ:=\nabla w_g\in\Ld^\infty(\Omega;\dot H^1(\R^d)^d)$ the solution of
\[\triangle w_g=g\mathds1_{\R^d\setminus\Ic},\quad\text{in $\R^d$}.\]
In view of~\eqref{eq:test-W+}, it satisfies for all $1<q<\infty$,
\begin{equation*}
\|[\nabla s_g^\circ]_2\|_{\Ld^2(\R^d;\Ld^q(\Omega))}\,\lesssim_q\,\|[g]_2\|_{\Ld^2(\R^d;\Ld^q(\Omega))}.
\end{equation*}
Next, as in~\eqref{eq:ext-Bogo}, by a standard use of the Bogovskii operator in form of~\cite[Theorem~III.3.1]{Galdi}, for all~$n$, we can construct a vector field $s_g^n\in H^1_0(I_n+\delta B)^d$ such that $s_g^n=-s_g^\circ+\fint_{I_n}s_g^\circ$ in~$I_n$, and
\begin{gather*}
\Div (s_g^n) = 0,\\
\|\nabla s_g^n\|_{\Ld^2((I_n+\delta B)\setminus I_n)}\,\lesssim\,\|\nabla s_g^\circ\|_{\Ld^2(I_n)}.
\end{gather*}
Since the fattened inclusions $\{I_n+\delta B\}_n$ are disjoint, cf.~\ref{Hd}, the vector field $s_g:=s_g^\circ+\sum_ns_g^n$ (where we implicitly extend $s_g^n$ by $0$ outside $I_n+\delta B$) is checked to satisfy the required properties.

\medskip\noindent
It remains to construct $h_g$.
As in~\eqref{eq:ext-Bogo}, using the Bogovskii operator in form of~\cite[Theorem~III.3.1]{Galdi}, for all $n$, we can construct a $2$-tensor field $h_g^n \in H^1_0(I_n+\delta B)^{d\times d}$ such that $h_g^n|_{I_n}=\fint_{I_n}g$, and
\begin{gather*}
\Div (h_g^n)=0,\\
\|\nabla h_g^n\|_{\Ld^2((I_n+\delta B)\setminus I_n)}\,\lesssim\,\|g\|_{\Ld^2(I_n)},
\end{gather*}
and the tensor field $h_g=\sum_nh_g^n$ then satisfies the required properties.

\medskip
\step2 Preliminary: trace estimate.\\
For later reference, we prove the following general trace estimate: given a symmetric $2$-tensor field $H\in C^\infty_b(\R^d)^{d\times d}$ such that
\[\left\{\begin{array}{ll}
\Div(H)=0,&\text{in $(I_n+\delta B)\setminus I_n$,}\\
\int_{\partial I_n}H\nu=0,&\\
\int_{\partial I_n}\Theta(x-x_n)\cdot H\nu=0,&\text{for all $\Theta\in\Md^\Skew$},
\end{array}\right.\]
we have for all $g\in C^\infty_b(\R^d)^d$,
\begin{equation}\label{eq:trace-estim}
\Big|\int_{\partial I_n}g\cdot H\nu\Big|\,\lesssim\,\Big(\int_{(I_n+\delta B)\setminus I_n}|\!\D(g)|^2\Big)^\frac12\Big(\int_{(I_n+\delta B)\setminus I_n}|H|^2\Big)^\frac12.
\end{equation}
We start by considering the following auxiliary Neumann problem,
\[\left\{\begin{array}{ll}
-\triangle z_n+\nabla R_n=0,&\text{in $I_n$},\\
\Div(z_n)=0,&\text{in $I_n$},\\
\sigma(z_n,R_n)\nu=H\nu,&\text{on $\partial I_n$}.
\end{array}\right.\]
Well-posedness for this problem is obtained as for~\eqref{eq:sol-psiEn} thanks to the assumptions on~$H$, and the solution satisfies
\[\|(\nabla z_n,R_n)\|_{\Ld^2(I_n)}\,\lesssim\,\|H\|_{\Ld^2((I_n+\delta B)\setminus I_n)}.\]
Since Stokes' formula yields
\[\int_{\partial I_n}g\cdot H\nu\,=\,\int_{\partial I_n}g\cdot \sigma(z_n,R_n)\nu\,=\,\int_{I_n}\D(g): \sigma(z_n,R_n),\]
the claim~\eqref{eq:trace-estim} follows.

\medskip
\step{3} Proof of
\begin{eqnarray}
\int_{B_{\ell+3}(x)}|\nabla\psi_E'|^2&\lesssim&\int_{B_{\ell+3}(x)}\big(1+|\nabla\psi_E|^2\big),\label{ant.1.30-}\\
\int_{B_{\ell+3}(x)}|\Sigma_E'|^2\mathds1_{\R^d\setminus\Ic'}&\lesssim&\int_{B_{\ell+3}(x)}\big(1+|\nabla\psi_E|^2+|\Sigma_E|^2\mathds1_{\R^d\setminus\Ic}\big).\label{ant.1.3}
\end{eqnarray}
Equation~\eqref{eq:psiE-ref0} for $\psi_E-\psi_E'$ takes the form
\begin{multline}\label{eq:diff-psi-def}
-\triangle(\psi_E-\psi_E')+\nabla(\Sigma_E\mathds1_{\R^d\setminus\Ic}-\Sigma_E'\mathds1_{\R^d\setminus\Ic'})\\
\,=\,-\sum_n\delta_{\partial I_n}\sigma(\psi_E+Ex,\Sigma_E)\nu+\sum_n\delta_{\partial I_n'}\sigma(\psi_E'+Ex,\Sigma_E')\nu.
\end{multline}
Testing this equation with $\psi_E-\psi_E'$, we find
\begin{multline*}
\int_{\R^d}|\nabla(\psi_E-\psi_E')|^2
\,=\,-\sum_n\int_{\partial I_n}(\psi_E-\psi_E')\cdot\sigma(\psi_E+Ex,\Sigma_E)\nu\\
+\sum_n\int_{\partial I_n'}(\psi_E-\psi_E')\cdot\sigma(\psi_E'+Ex,\Sigma_E')\nu,
\end{multline*}
which, by the boundary conditions, turns into
\begin{multline}\label{eq:eqn-psi-psi'0}
\int_{\R^d}|\nabla(\psi_E-\psi_E')|^2
\,=\,\sum_{n:I_n\cap B_\ell(x)\ne\varnothing}\int_{\partial I_n}\psi_E'\cdot\sigma(\psi_E+Ex,\Sigma_E)\nu\\
+\sum_{n:I_n'\cap B_\ell(x)\ne\varnothing}\int_{\partial I_n'}\psi_E\cdot\sigma(\psi_E'+Ex,\Sigma_E')\nu.
\end{multline}
Note that, by Stokes' formula, the constraints $\Div(\psi_E)=\Div(\psi_E')=0$ allow to replace the pressures $\Sigma_E$ and~$\Sigma_E'$ in this identity by $\Sigma_E-c$ and $\Sigma_E'-c'$, respectively, for any constants $c,c'\in\R$.
Appealing to the trace estimate~\eqref{eq:trace-estim}, we are led to
\begin{multline*}
\int_{\R^d}|\nabla(\psi_E-\psi_E')|^2
\,\lesssim\,\Big(\int_{B_{\ell+3}(x)}\big(1+|\nabla\psi_E|^2+|\Sigma_E-c|^2\mathds1_{\R^d\setminus\Ic}\big)\Big)^\frac12\\
\times\Big(\int_{B_{\ell+3}(x)}\big(1+|\nabla\psi_E'|^2+|\Sigma_E'-c'|^2\mathds1_{\R^d\setminus\Ic'}\big)\Big)^\frac12.
\end{multline*}
Choosing $c:=\fint_{B_{\ell+3}(x)\setminus\Ic }\Sigma_E$ and $c':=\fint_{B_{\ell+3}(x)\setminus\Ic'}\Sigma_E'$,
and using the pressure estimate of Lemma~\ref{lem:pres}, we deduce
\begin{equation}\label{eq:preant1.3}
\int_{\R^d}|\nabla(\psi_E-\psi_E')|^2
\,\lesssim\,\Big(\int_{B_{\ell+3}(x)}\big(1+|\nabla\psi_E|^2\big)\Big)^\frac12
\Big(\int_{B_{\ell+3}(x)}\big(1+|\nabla\psi_E'|^2\big)\Big)^\frac12,
\end{equation}
and the claim~\eqref{ant.1.30-}  follows from the triangle inequality.

\medskip\noindent
Next, we establish the corresponding bound~\eqref{ant.1.3} on the perturbed pressure.
Using the Bogovskii operator as in the construction of $s_g$ in Step~1, we can construct a vector field $S_E\in\dot H^1(\R^d)^d$ such that
$S_E|_{I_n'}$ is constant for all $n$ and such that
\begin{gather*}
\Div (S_E)=(\Sigma_E\mathds1_{\R^d\setminus\Ic}-\Sigma_E'\mathds1_{\R^d\setminus\Ic'})\mathds1_{\R^d\setminus\Ic'},\\
\|\nabla S_E\|_{\Ld^2(\R^d)}\,\lesssim\,\|\Sigma_E\mathds1_{\R^d\setminus\Ic}-\Sigma_E'\mathds1_{\R^d\setminus\Ic'}\|_{\Ld^2(\R^d\setminus\Ic')}.
\end{gather*}
Testing equation~\eqref{eq:diff-psi-def} with $S_E$ and using the boundary conditions, we find
\begin{multline*}
\int_{\R^d}\Div(S_E)\,(\Sigma_E\mathds1_{\R^d\setminus\Ic}-\Sigma_E'\mathds1_{\R^d\setminus\Ic'})=\int_{\R^d}\nabla S_E:\nabla(\psi_E-\psi_E')\\
+\sum_{n:I_n\cap B_\ell(x)\ne\varnothing}\int_{\partial I_n}S_E\cdot\sigma(\psi_E+Ex,\Sigma_E)\nu,
\end{multline*}
which yields, by inserting the value of $\Div (S_E)$ and using again the trace estimate~\eqref{eq:trace-estim},
\begin{multline*}
\int_{\R^d\setminus\Ic'}|\Sigma_E\mathds1_{\R^d\setminus\Ic}-\Sigma_E'\mathds1_{\R^d\setminus\Ic'}|^2\lesssim\Big(\int_{\R^d}|\nabla S_E|^2\Big)^\frac12\\
\times\Big(\int_{\R^d}|\nabla(\psi_E-\psi_E')|^2+\int_{B_{\ell+3}(x)}\big(1+|\nabla\psi_E|^2+|\Sigma_E|^2\mathds1_{\R^d\setminus\Ic}\big)\Big)^\frac12.
\end{multline*}
Appealing to the bound on the norm of $\nabla S_E$, this yields
\begin{equation*}
\int_{\R^d\setminus\Ic'}|\Sigma_E\mathds1_{\R^d\setminus\Ic}-\Sigma_E'\mathds1_{\R^d\setminus\Ic'}|^2\,\lesssim\,
\int_{\R^d}|\nabla(\psi_E-\psi_E')|^2+\int_{B_{\ell+3}(x)}\big(1+|\nabla\psi_E|^2+|\Sigma_E|^2\mathds1_{\R^d\setminus\Ic}\big).
\end{equation*}
Combining this with~\eqref{eq:preant1.3} and \eqref{ant.1.30-}, the claim~\eqref{ant.1.3} follows by the triangle inequality.

\medskip
\step{4} Sensitivity of the corrector gradient outside the inclusions: for all $g\in C^\infty_c(\R^d)^{d\times d}$,
\begin{multline}\label{ant.1.1}
\Big|\int_{\R^d\setminus\Ic }g:\nabla\psi_E-\int_{\R^d\setminus\Ic '}g:\nabla\psi_E'\Big|\\
\,\lesssim\, \Big(\int_{B_{\ell+3}(x)}\big(|g|^2+|\nabla u_g|^2\big)\Big)^\frac12
\Big(\int_{B_{\ell+3}(x)}\big(1+|\nabla\psi_E|^2\big)\Big)^\frac12.
\end{multline}
Decomposing $\int_{\R^d \setminus\Ic }-\int_{\R^d \setminus\Ic '}=\int_{\Ic '\setminus\Ic }-\int_{\Ic \setminus\Ic '}$ and noting that $(\Ic '\setminus\Ic )\cup(\Ic \setminus\Ic ')\subset B_{\ell}(x)$, we find
\begin{multline}\label{eq:decomp-delphi0}
\Big|\int_{\R^d\setminus\Ic }g:\nabla\psi_E-\int_{\R^d\setminus\Ic' }g:\nabla\psi_E'\Big|\\
\lesssim\Big|\int_{\R^d\setminus\Ic }g:\nabla(\psi_E-\psi_E')\Big|+\Big(\int_{B_{\ell}(x)}|g|^2\Big)^\frac12\Big(\int_{B_{\ell}(x)}|\nabla\psi_E'|^2\Big)^\frac12.
\end{multline}
It remains to examine the first right-hand side term, for which we appeal to a duality argument, in terms of the solution $\nabla u_g$ of~\eqref{eq:test-v0}.
Testing with $\psi_E-\psi_E'$ the equation~\eqref{e:up-whole0} for $\nabla u_g$, and subtracting an arbitrary constant $c_1\in\R$ to the pressure $P_g$, we obtain
\[\int_{\R^d\setminus\Ic }g:\nabla(\psi_E-\psi_E')\,=\,-\int_{\R^d}\nabla u_g:\nabla(\psi_E-\psi_E')-\sum_n\int_{\partial I_n}(\psi_E-\psi_E')\cdot\big(g+\sigma(u_g,P_g-c_1)\big)\nu,\]
which, in view of the boundary conditions, turns into
\begin{multline}\label{ant.1.1.1}
\int_{\R^d\setminus\Ic }g:\nabla(\psi_E-\psi_E')\,=\,-\int_{\R^d}\nabla u_g:\nabla(\psi_E-\psi_E')\\
+\sum_{n:I_n\cap B_\ell(x)\ne\varnothing}\int_{\partial I_n}\psi_E'\cdot\big(g+\sigma(u_g,P_g-c_1)\big)\nu.
\end{multline}
Likewise, testing with $u_g$ the equation~\eqref{eq:diff-psi-def} for $\psi_E-\psi'_E$, we get for any constant~$c_2\in\R$,
\begin{equation*}
\int_{\R^d}\nabla u_g :\nabla(\psi_E-\psi_E')\,=\,
-\sum_n\int_{\partial I_{n}} u_g \cdot\sigma(\psi_E+Ex ,\Sigma_E)\nu
+\sum_n\int_{\partial I_{n}'} u_g \cdot\sigma(\psi_E'+Ex,\Sigma_E'-c_2)\nu,
\end{equation*}
which, in view of the boundary conditions, takes the form
\begin{equation*}
\int_{\R^d}\nabla u_g :\nabla(\psi_E-\psi_E')\,=\,
\sum_{n:I_n'\cap B_\ell(x)\ne\varnothing}\int_{\partial I_{n}'} u_g\cdot\sigma(\psi_E'+Ex,\Sigma_E'-c_2)\nu.
\end{equation*}
Combining this with \eqref{ant.1.1.1}, we obtain
\begin{multline*}
\int_{\R^d\setminus\Ic }g:\nabla(\psi_E-\psi_E')\,=\,
\sum_{n:I_n\cap B_\ell(x)\ne\varnothing}\int_{\partial I_n}\psi_E'\cdot\big(g+\sigma(u_g,P_g-c_1)\big)\nu\\
-\sum_{n:I_n'\cap B_\ell(x)\ne\varnothing}\int_{\partial I_{n}'} u_g \cdot\sigma(\psi_E'+Ex,\Sigma_E'-c_2)\nu.
\end{multline*}
Appealing to the trace estimate~\eqref{eq:trace-estim}, we deduce
\begin{multline*}
\Big|\int_{\R^d\setminus\Ic }g:\nabla(\psi_E-\psi_E')\Big|
\,\lesssim\, \Big(\int_{B_{\ell+3}(x)}\big(|g|^2+|\nabla u_g|^2+|P_g -c_1|^2\mathds1_{\R^d\setminus\Ic }\big)\Big)^\frac12\\
\times\Big(\int_{B_{\ell+3}(x)}\big(1+|\nabla\psi_E'|^2+|\Sigma_E'-c_2|^2\mathds1_{\R^d\setminus\Ic '}\big)\Big)^\frac12.
\end{multline*}
Choosing $c_1:=\fint_{B_{\ell+3}(x)\setminus\Ic }P_g$ and $c_2:=\fint_{B_{\ell+3}(x)\setminus\Ic' }\Sigma_E'$, and appealing to the pressure estimate of Lemma~\ref{lem:pres},
we deduce
\begin{equation}\label{eq:preant1.1}
\Big|\int_{\R^d\setminus\Ic }g:\nabla(\psi_E-\psi_E')\Big|
\,\lesssim\, \Big(\int_{B_{\ell+3}(x)}\big(|g|^2+|\nabla u_g|^2\big)\Big)^\frac12
\Big(\int_{B_{\ell+3}(x)}\big(1+|\nabla\psi_E'|^2\big)\Big)^\frac12.
\end{equation}
Combined with~\eqref{eq:decomp-delphi0} and with the result~\eqref{ant.1.30-} of Step~3, this yields the claim \eqref{ant.1.1}.

\medskip
\step{5} Sensitivity of the corrector gradient inside the inclusions: for all $g\in C^\infty_c(\R^d)^{d\times d}$,
\begin{multline}\label{eq:diff-psi-ins}
\Big|\int_{\Ic }g:\nabla\psi_E-\int_{\Ic '}g:\nabla\psi_E'\Big|\,\lesssim\,\Big|\int_{\R^d\setminus\Ic }h_g :\nabla\psi_E-\int_{\R^d\setminus\Ic }h_g :\nabla\psi_E'\Big|\\
+\Big(\int_{B_{\ell+3}(x)}\big(|g|^2+|h_g|^2\big)\Big)^\frac12
\Big(\int_{B_{\ell+3}(x)}\big(1+|\nabla\psi_E|^2\big)\Big)^\frac12.
\end{multline}
First decompose
\begin{multline*}
{\Big|\int_{\Ic }g:\nabla\psi_E-\int_{\Ic '}g:\nabla\psi_E'\Big|}
\,\le\, \bigg| \sum_{n:I_n\cap B_\ell(x)=\varnothing} \int_{I_{n}} g: \nabla (\psi_E - \psi_E ') \bigg|
\\
+\sum_{n:I_n\cap B_\ell(x)\ne\varnothing}\Big| \int_{I_{n}} g: \nabla \psi_E\Big| +\sum_{n:I_n'\cap B_\ell(x)\ne\varnothing} \Big|\int_{I_{n}'} g:  \nabla \psi_E'\Big|.
\end{multline*}
Since $\psi_E$ and $\psi_E'$ are both affine inside inclusions $I_{n}$'s with $I_{n}\cap B_\ell(x)=\varnothing$, we can rewrite
\begin{multline*}
\Big|\int_{\Ic }g:\nabla\psi_E-\int_{\Ic '}g:\nabla\psi_E'\Big|
\,\lesssim\,\bigg|\sum_{n}\Big(\fint_{I_{n}}g\Big):\int_{I_{n}}\nabla(\psi_E-\psi_E')\bigg|\\
+\Big(\int_{B_{\ell+2}(x)}|g|^2\Big)^\frac12\Big(\int_{B_{\ell+2}(x)}\big(|\nabla\psi_E|^2+|\nabla\psi_E'|^2\big)\Big)^\frac12,
\end{multline*}
and it remains to analyze the first right-hand side term.
In terms of the $2$-tensor field $h_g$ defined in~\eqref{eq:test-H}, we can write
by means of Stokes' formula,
\begin{eqnarray*}
\sum_{n}\Big(\fint_{I_{n}}g\Big):\int_{I_{n}}\nabla(\psi_E-\psi_E')&=&\sum_{n}\Big(\fint_{I_{n}}g\Big):\int_{\partial I_{n}}(\psi_E-\psi_E')\otimes\nu\\
&=&\sum_{n}\int_{\partial I_{n}}h_g :(\psi_E-\psi_E')\otimes\nu\\
&=&-\int_{\R^d\setminus\Ic }\partial_i\big(h_g :(\psi_E-\psi_E')\otimes\ee_i\big)\\
&=&-\int_{\R^d\setminus\Ic }h_g :\nabla(\psi_E-\psi_E'),
\end{eqnarray*}
where in the last identity we used that $\Div(h_g)=0$.
Combining with the above, and using the result~\eqref{ant.1.30-} of Step~3, the claim~\eqref{eq:diff-psi-ins} follows.

\medskip
\step{6} Sensitivity of the corrector pressure: for all $g\in C^\infty_c(\R^d)$,
\begin{multline}\label{eq:bnd-Sig-diff}
\Big|\int_{\R^d\setminus\Ic}g\,\Sigma_E-\int_{\R^d\setminus\Ic'}g\,\Sigma_E'\Big|\,\lesssim\,
\Big|\int_{\R^d\setminus\Ic}\nabla s_g:\nabla\psi_E-\int_{\R^d\setminus\Ic'}\nabla s_g:\nabla\psi_E'\Big|\\
+\Big(\int_{B_{\ell+3}(x)}\big(|g|^2+|\nabla s_g|^2\big)\Big)^\frac12\Big(\int_{B_{\ell+3}(x)}\big(1+|\nabla\psi_E|^2+|\Sigma_E|^2\mathds1_{\R^d\setminus\Ic}\big)\Big)^\frac12.
\end{multline}
In terms of the vector field $s_g$ defined in~\eqref{eq:test-R}, we can write
\begin{eqnarray*}
\int_{\R^d\setminus\Ic}g\,\Sigma_E-\int_{\R^d\setminus\Ic'}g\,\Sigma_E'&=&\int_{\R^d\setminus\Ic}g\big(\Sigma_E\mathds1_{\R^d\setminus\Ic}-\Sigma_E'\mathds1_{\R^d\setminus\Ic'}\big)-\int_{\Ic\setminus\Ic'}g\,\Sigma_E'\\
&=&\int_{\R^d}\Div(s_g)\,(\Sigma_E\mathds1_{\R^d\setminus\Ic}-\Sigma_E'\mathds1_{\R^d\setminus\Ic'}) -\int_{\Ic\setminus\Ic'}g\,\Sigma_E',
\end{eqnarray*}
and thus, using the equation~\eqref{eq:diff-psi-def} for $\psi_E-\psi_E'$, the boundary conditions, and the fact that $s_g$ is constant on the inclusion $I_n$
\begin{multline}\label{eq:pre-bnd-Sig-diff}
\int_{\R^d\setminus\Ic}g\,\Sigma_E-\int_{\R^d\setminus\Ic'}g\,\Sigma_E'\,=\,
\int_{\R^d}\nabla s_g:\nabla(\psi_E-\psi_E')\\
-\sum_{n:I_n'\cap B_\ell(x)\ne\varnothing}\int_{\partial I_n'}s_g\cdot\sigma(\psi_E'+Ex,\Sigma_E')\nu
-\int_{\Ic\setminus\Ic'}g\,\Sigma_E'.
\end{multline}
As $s_g|_{I_n}$ is constant for all $n$,  $\nabla s_g=0$ in $\Ic$,
and since $\Ic \setminus \Ic' \subset B_{\ell}(x)$ the first right-hand side term satisfies
\begin{multline}\label{eq:rgcst-psidiff}
\bigg|\int_{\R^d}\nabla s_g:\nabla(\psi_E-\psi_E')-\Big(\int_{\R^d\setminus\Ic}\nabla s_g:\nabla\psi_E-\int_{\R^d\setminus\Ic'}\nabla s_g:\nabla\psi_E'\Big)\bigg|\\
\,\le\,\Big(\int_{B_{\ell}(x)}|\nabla s_g|^2\Big)^\frac12\Big(\int_{B_{\ell}(x)}|\nabla\psi_E'|^2\Big)^\frac12.
\end{multline}
Combining this with~\eqref{eq:pre-bnd-Sig-diff}, appealing to the trace estimate~\eqref{eq:trace-estim}, and using~\eqref{ant.1.30-}--\eqref{ant.1.3} in Step~3, the claim~\eqref{eq:bnd-Sig-diff}  follows.

\medskip
\step{7} Sensitivity of the extended flux: for all $g\in C^\infty_c(\R^d)^{d\times d}_\Sym$,
\begin{multline}\label{eq:bnd-J-diff}
\Big|\int_{\R^d}g:(J_E-J_E')\Big|\\
\,\lesssim\,
\Big|\int_{\R^d\setminus\Ic}g:\nabla\psi_E-\int_{\R^d\setminus\Ic'}g:\nabla\psi_E'\Big|
+\Big|\int_{\R^d\setminus\Ic}\Tr(g)\,\Sigma_E-\int_{\R^d\setminus\Ic'}\Tr(g)\,\Sigma_E'\Big|\\
+\Big(\int_{B_{\ell+3}(x)}|g|^2\Big)^\frac12
\Big(\int_{B_{\ell+3}(x)}\big(1+|\nabla\psi_E|^2+|\Sigma_E|^2\mathds1_{\R^d\setminus\Ic}\big)\Big)^\frac12.
\end{multline}
The definition~\eqref{eq:def-J} of $J_E$ yields
\begin{multline*}
\int_{\R^d}g:(J_E-J_E')\,=\,2\Big(\int_{\R^d\setminus\Ic}g:(\nabla \psi_E+E)-\int_{\R^d\setminus\Ic'}g:(\nabla\psi_E'+E)\Big)\\
-\Big(\int_{\R^d\setminus\Ic}\Tr(g)\,\Sigma_E-\int_{\R^d\setminus\Ic'}\Tr(g)\,\Sigma_E'\Big)\\
+\sum_{n:I_n\cap B_\ell(x)\ne\varnothing}\int_{I_n}g:\sigma(\psi_E^{n},\Sigma_E^{n})-\sum_{n:I_n'\cap B_\ell(x)\ne\varnothing}\int_{I_n'}g:\sigma(\psi_E^{\prime n},\Sigma_E^{\prime n}),
\end{multline*}
and the claim~\eqref{eq:bnd-J-diff} then follows by using~\eqref{eq:bnd-psinSign} to estimate the last two right-hand side terms.

\medskip
\step{8} Sensitivity of the flux corrector: for all $g\in C^\infty_c(\R^d)^{d}$,
\begin{equation}\label{eq:bnd-zeta-diff}
\Big|\int_{\R^d}g\cdot\nabla(\zeta_{E;ijk}-\zeta_{E;ijk}')\Big|\,\lesssim\,
\Big|\int_{\R^d}\nabla v_g\otimes(J_E-J_E')\Big|.
\end{equation}
In terms of the auxiliary field $\nabla v_g$ defined in~\eqref{eq:test-W},
we can write
\begin{equation*}
\int_{\R^d}g\cdot\nabla\zeta_{E;ijk}-\int_{\R^d}g\cdot\nabla\zeta_{E;ijk}'\,=\,-\int_{\R^d}\nabla v_g\cdot\nabla\zeta_{E;ijk}+\int_{\R^d}\nabla v_g\cdot\nabla\zeta_{E;ijk}',
\end{equation*}
which, in view of the equation~\eqref{eq:def-zeta} for $\zeta_E$, takes the form
\begin{equation*}
\int_{\R^d}g\cdot\nabla\zeta_{E;ijk}-\int_{\R^d}g\cdot\nabla\zeta_{E;ijk}'\,=\,\int_{\R^d}\partial_j v_g (J_{E;ik}-J_{E;ik}')-\int_{\R^d}\partial_k v_g(J_{E;ij}-J_{E;ij}'),
\end{equation*}
and the claim~\eqref{eq:bnd-zeta-diff} follows.

\medskip
\step{9} Conclusion.\\
Iteratively combining the results~\eqref{ant.1.1}, \eqref{eq:diff-psi-ins}, \eqref{eq:bnd-Sig-diff}, \eqref{eq:bnd-J-diff}, and \eqref{eq:bnd-zeta-diff} of Steps~4--8,
we obtain for all $g\in C^\infty_c(\R^d)$,
\begin{equation*}
\Big|\partial^{\operatorname{osc}}_{\Ic ,B_\ell(x)}\int_{\R^d}g\,(\nabla \psi_E,\Sigma_E\mathds1_{\R^d\setminus\Ic},\nabla\zeta_E)\Big|
\,\lesssim\,\ell^dM_\ell(x)\Big(\fint_{B_{\ell+3}(x)}\big(|A[g]|^2+|\nabla U [A[g]]|^2\big)\Big)^\frac12,
\end{equation*}
where we have set for abbreviation
\begin{eqnarray*}
M_\ell(x)&:=&\Big(\fint_{B_{\ell+3}(x)}\big(1+|\nabla\psi_E|^2+|\Sigma_E|^2\mathds1_{\R^d\setminus\Ic}\big)\Big)^\frac12,\\
A[g]&:=&\big(g,H[g],\nabla S[g],\nabla V[g],\nabla S[\nabla V[g]]\big),
\end{eqnarray*}
in terms of the following linear operators
\[\nabla U[g]:=\nabla u_g,\quad \nabla V[g]:=\nabla v_g,\quad \nabla S[g]:=\nabla s_g,\quad H[g]:=h_g,\]
as defined in Step~1.
Inserting this into~\eqref{eq:SG-phi}, we find for all $q<\infty$,
\begin{multline}\label{eq:pre-bnd-aver-phi}
\Big\|\int_{\R^d}g\,(\nabla \psi_E,\Sigma_E\mathds1_{\R^d\setminus\Ic},\nabla\zeta_E)\Big\|_{\Ld^{2q}(\Omega)}^{2}\\
\,\lesssim_q\,\,\E\bigg[\int_0^\infty\bigg(\int_{\R^d}M_\ell(x)^2\Big(\fint_{B_{\ell+3}(x)}\big(|A[g]|^2+|\nabla U [A[g]]|^2\big)\Big)dx\bigg)^q\langle\ell\rangle^{dq}\,\pi(\ell)\,d\ell\bigg]^\frac1q.
\end{multline}
Before we estimate the right-hand side of \eqref{eq:pre-bnd-aver-phi}, we smuggle in a spatial average at some arbitrary scale $R\ge1$: setting $|f|^2:=|A[g]|^2+|\nabla U [A[g]]|^2$ for shortness,
\begin{equation*}
\int_{\R^d}M_\ell(x)^2\Big(\fint_{B_{\ell+3}(x)} |f|^2\Big)dx
\,\lesssim\,\int_{\R^d}\Big(\sup_{B_R(y)}M_\ell^2\Big)\bigg(\fint_{B_{\ell+3}(y)}\Big(\fint_{B_{R}(x)}|f|^2\Big)dx\bigg)dy.
\end{equation*}
We then use a duality argument to compute the $\Ld^q(\Omega)$ norm of this expression,
\begin{multline*}
\E\bigg[\bigg(\int_{\R^d}M_\ell(x)^2\Big(\fint_{B_{\ell+3}(x)}|f|^2\Big)dx\bigg)^q\bigg]^\frac1q\\
\hspace{-6cm}\,\lesssim\,\sup_{\|X\|_{\Ld^{2q'}(\Omega)}=1}\,\E\bigg[\int_{\R^d}\Big(\sup_{ B_R(y)}M_\ell^2\Big)
\bigg(\fint_{B_{\ell+3}(y)}\Big(\fint_{B_{R}(x)}|Xf|^2\Big)\,dx\bigg)\,dy\bigg].
\end{multline*}
where the supremum runs over random variables $X$ independent of the space variable.
By H\"older's inequality and by stationarity of~$M_\ell$, we find
\begin{multline*}
\E\bigg[\bigg(\int_{\R^d}M_\ell(x)^2\Big(\fint_{B_{\ell+3}(x)} |f|^2\Big)\,dx\bigg)^q\bigg]^\frac1q\\
\,\lesssim\, \Big\|\sup_{B_R}M_\ell\Big\|_{\Ld^{2q}(\Omega)}^2~
\sup_{\|X\|_{\Ld^{2q'}(\Omega)}=1} \int_{\R^d}\E \bigg[\bigg(\fint_{B_{\ell+3}(y)} \Big(\fint_{B_{R}(x)}|Xf|^2\Big)\,dx\bigg)^{q'} \bigg]^\frac1{q'}dy,
\end{multline*}
which, by Jensen's inequality, yields
\begin{multline}\label{eq:bnd-Mell-subd}
\E\bigg[\bigg(\int_{\R^d}M_\ell(x)^2\Big(\fint_{B_{\ell+3}(x)}|f|^2\Big)\,dx\bigg)^q\bigg]^\frac1q\\
\,\lesssim\, \Big\|\sup_{B_R}M_\ell\Big\|_{\Ld^{2q}(\Omega)}^2~
\sup_{\|X\|_{\Ld^{2q'}(\Omega)}=1}\|[Xf]_2\|_{\Ld^2(\R^d;\Ld^{2q'}(\Omega))}^2.
\end{multline}
Appealing to the annealed estimate in~\eqref{eq:test-V+}, we find for $q\gg1$ (hence $|2q'-2|\ll1$),
\begin{eqnarray*}
\|[X\nabla V[A[g]]]_2\|_{\Ld^2(\R^d;\Ld^{2q'}(\Omega))}&=&\|[\nabla V[A[Xg]]]_2\|_{\Ld^2(\R^d;\Ld^{2q'}(\Omega))}\\
&\lesssim&\|[A[Xg]]_2\|_{\Ld^2(\R^d;\Ld^{2q'}(\Omega))},
\end{eqnarray*}
while the annealed estimates in~\eqref{eq:test-W+}, \eqref{eq:test-R}, and~\eqref{eq:test-H} yield for $q>1$,
\begin{eqnarray*}
\|[A[Xg]]_2\|_{\Ld^2(\R^d;\Ld^{2q'}(\Omega))}
\,\lesssim\,\|[Xg]_2\|_{\Ld^2(\R^d;\Ld^{2q'}(\Omega))}\,=\,\|X\|_{\Ld^{2q'}(\Omega)}\|g\|_{\Ld^2(\R^d)}.
\end{eqnarray*}
Using these bounds in combination with~\eqref{eq:pre-bnd-aver-phi} and~\eqref{eq:bnd-Mell-subd}, together with the superalgebraic decay of the weight $\pi$ in form of Jensen's inequality, cf.~Assumption~\ref{Mix+}, we obtain for all $1\ll q<\infty$,
\begin{equation*}
\Big\|\int_{\R^d}g\,(\nabla \psi_E,\Sigma_E\mathds1_{\R^d\setminus\Ic},\nabla\zeta_E)\Big\|_{\Ld^{2q}(\Omega)}^{2}
\,\lesssim_q\,\Big\|\sup_{B_R}M_\ell\Big\|_{\Ld^{2q}(\Omega)}^2\| g\|_{\Ld^2(\R^d)}^2.
\end{equation*}
Finally, by stationarity and by the $\ell^{2s}-\ell^\infty$ inequality, the supremum of $M_\ell$ can be estimated as follows, for all $s\ge1$,
\begin{equation*}
\Big\|\sup_{B_R}M_\ell\Big\|_{\Ld^{2q}(\Omega)}\,\lesssim\,\bigg\|\Big(1+\int_{B_{R}}[(\nabla\psi_E,\Sigma_E\mathds1_{\R^d\setminus\Ic})]_2^{2s}\Big)^\frac1{2s}\bigg\|_{\Ld^{2q}(\Omega)},
\end{equation*}
and the conclusion~\eqref{eq:CLTscaling} follows.
\qed

\subsection{Proof of Proposition~\ref{prop:interpol}}
Let $E\in\Md_0$ be fixed with $|E|=1$.
We split the proof into three steps.

\medskip
\step1 Meyers-type perturbative argument: for all $s\ge1$ with $|s-1|\ll1$, for all $R,K\ge1$ and $c_R\in\R^d$,
\begin{equation}\label{gl.2}
\Big(\fint_{B_R}[\nabla \psi_E]_2^{2s}\Big)^\frac1{s}
\lesssim K^2\Big(1+\frac{1}{R^2}\fint_{B_{CR}}|\psi_E-c_R|^2\Big)+\frac1{K^2}\fint_{B_{CR}}|\nabla \psi_E|^2.
\end{equation}
Arguing as in~\eqref{eq:Cacc-Sob}, with $u_g$ replaced by $\psi_E+Ex$ and with $g=0$, we obtain the following Caccioppoli-type inequality: for all balls $D\subset\R^d$ with radius $r_D\ge3$, for all $K\ge1$ and $c_D\in\R^d$,
\begin{equation}\label{eq:preconcl-buckle}
\fint_{D}|\nabla \psi_E|^2
\,\lesssim\, K^2\Big(1+\frac{1}{r_D^2}\fint_{2D}|\psi_E-c_D|^2\Big)
+\frac1{K^2}\fint_{2D}|\nabla\psi_E|^2.
\end{equation}
Using the Poincar\'e-Sobolev inequality to estimate the first right-hand side term, with the choice $c_D:=\fint_{2D}\psi_E$, we deduce 
\begin{equation*}
\Big(\fint_{D}|\nabla \psi_E|^2\Big)^\frac12
\,\lesssim \,K\Big(1+\fint_{2D}|\nabla\psi_E|^\frac{2d}{d+2}\Big)^\frac{d+2}{2d}
+\frac1K\Big(\fint_{2D}|\nabla\psi_E|^2\Big)^\frac12.
\end{equation*}
While this is proven for all balls $D$ with radius $r_D\ge3$, smuggling in local quadratic averages at scale~1 allows to infer that for all balls $D$ (with any radius $r_D>0$) and $K\ge1$, 
\begin{equation*}
\Big(\fint_{D}[\nabla \psi_E]_2^2\Big)^\frac12
\,\lesssim\, K\Big(1+\fint_{2D}[\nabla\psi_E]_2^\frac{2d}{d+2}\Big)^\frac{d+2}{2d}
+\frac1K\Big(\fint_{2D}[\nabla\psi_E]_2^2\Big)^\frac12.
\end{equation*}
Choosing $K$ large enough and applying Gehring's lemma in form of Lemma~\ref{lem:Gehring}, we deduce the following Meyers-type estimate: for all $s\ge1$ with $|s-1|\ll1$, and all~$R>0$,
\begin{equation*}
\Big(\fint_{B_R}[\nabla \psi_E]_2^{2s}\Big)^\frac1{s}
\,\lesssim\, 1+\fint_{B_{CR}}[\nabla\psi_E]_2^2.
\end{equation*}
Combining this with~\eqref{eq:preconcl-buckle}, the claim~\eqref{gl.2} follows.

\medskip
\step2 Conclusion on $\nabla\psi_E$: for all $1\le r\ll_\chi R$ and $q,s\ge1$ with $|s-1|\ll1$,
\begin{equation*}
\bigg\|\Big(\fint_{B_{R}}[\nabla \psi_E]_2^{2s}\Big)^\frac1{2s}\bigg\|_{\Ld^{2q}(\Omega)}\,\lesssim_\chi\,1+\Big\|\int_{\R^d}\chi_r\nabla\psi_E\Big\|_{\Ld^{2q}(\Omega)}.
\end{equation*}
For $1\le r\le R$, choosing $c_R:=\fint_{B_{CR}}\chi_r\ast\psi_E$, Poincar\'e's inequality yields
\begin{eqnarray*}
\fint_{B_{CR}}|\psi_E-c_R|^2&\lesssim &\fint_{B_{CR}}|\psi_E-\chi_r\ast\psi_E|^2+\fint_{B_{CR}}|\chi_r\ast\psi_E-c_R|^2\\
&\lesssim_\chi&r^2\fint_{B_{CR}}|\nabla\psi_E|^2+R^2\fint_{B_{CR}}|\chi_r\ast\nabla\psi_E|^2.
\end{eqnarray*}
Inserting this into~\eqref{gl.2}, we find
\begin{equation*}
\Big(\fint_{B_R}[\nabla \psi_E]_2^{2s}\Big)^\frac1s
\,\lesssim\,K^2+ \Big(K^2\frac{r^2}{R^2}+\frac1{K^2}\Big)\fint_{B_{CR}}|\nabla\psi_E|^2+K^2\fint_{B_{CR}}|\chi_r\ast\nabla\psi_E|^2.
\end{equation*}
Taking the $\Ld^{q}(\Omega)$ norm, and using that stationarity and Jensen's inequality yield
\[\Big\|\fint_{B_{CR}}|\nabla \psi_E|^2\Big\|_{\Ld^q(\Omega)}\,\lesssim\,\Big\|\fint_{B_{R}}[\nabla \psi_E]_2^2\Big\|_{\Ld^q(\Omega)}\,\lesssim\,\bigg\|\Big(\fint_{B_{R}}[\nabla \psi_E]_2^{2s}\Big)^\frac1{2s}\bigg\|_{\Ld^{2q}(\Omega)}^2,\]
and
\[\Big\|\fint_{B_{CR}} |\chi_r\ast\nabla \psi_E |^2\Big\|_{\Ld^q(\Omega)}\, \le\, \|\chi_r \ast \nabla \psi_E \|_{\Ld^{2q}(\Omega)}^2,\]
we deduce
\begin{multline*}
\bigg\|\Big(\fint_{B_{R}}[\nabla \psi_E]_2^{2s}\Big)^\frac1{2s}\bigg\|_{\Ld^{2q}(\Omega)}\\
\,\lesssim_\chi\,K+\Big(K\frac{r}{R}+\frac1{K}\Big)\bigg\|\Big(\fint_{B_{R}}[\nabla \psi_E]_2^{2s}\Big)^\frac1{2s}\bigg\|_{\Ld^{2q}(\Omega)}
+K\Big\|\int_{\R^d}\chi_r\nabla\psi_E\Big\|_{\Ld^{2q}(\Omega)}.
\end{multline*}
Choosing $K\gg1$ and $R\gg_{K,\chi} r$, the second right-hand side term can be absorbed into the left-hand side and the claim follows.

\medskip
\step3 Conclusion on the pressure $\Sigma_E$.\\
For all $R,s\ge1$,
we decompose
\begin{equation*} 
\Big(\fint_{B_R}[\Sigma_E\mathds1_{\R^d\setminus\Ic}]_2^{2s}\Big)^\frac1s\,\lesssim\,
\bigg(\fint_{B_R}\Big[\Big(\Sigma_E-\fint_{B_R\setminus\Ic}\Sigma_E\Big)\mathds1_{\R^d\setminus\Ic}\Big]_2^{2s}\bigg)^\frac1s+\Big|\fint_{B_R\setminus\Ic}\Sigma_E\Big|^2.
\end{equation*}
Appealing to the pressure estimate of Lemma~\ref{lem:pres} to estimate the first right-hand side term,
and further decomposing the second term, we obtain for all $1\le r\le R$, assuming that $\int_{\R^d\setminus\Ic}\chi_r\simeq\int_{\R^d}\chi_r=1$ (which holds automatically provided $r\gg_\chi1$ in view of the hardcore assumption, cf.~\ref{Hd}),
\begin{multline*} 
\Big(\fint_{B_R}[\Sigma_E\mathds1_{\R^d\setminus\Ic}]_2^{2s}\Big)^\frac1s\,\lesssim\,1+
\Big(\fint_{B_R}[\nabla\psi_E]_2^{2s}\Big)^\frac1s+\Big|\int_{\R^d}\chi_r\Sigma_E\mathds1_{\R^d\setminus\Ic}\Big|^2\\
+\bigg|\int_{\R^d}\chi_r\Big(\Sigma_E-\fint_{B_R\setminus\Ic}\Sigma_E\Big)\mathds1_{\R^d\setminus\Ic}\bigg|^2.
\end{multline*}
It remains to estimate the last right-hand side term. By the Cauchy--Schwarz inequality, for~$r\ll_\chi R$ such that $\chi_r$ is supported in $B_R$, using again the pressure estimate of Lemma~\ref{lem:pres}, we find
\begin{eqnarray*} 
\bigg|\int_{\R^d}\chi_r\Big(\Sigma_E-\fint_{B_R\setminus\Ic}\Sigma_E\Big)\mathds1_{\R^d\setminus\Ic}\bigg|^2&\lesssim&\Big(R^d\int_{\R^d}|\chi_r|^2\Big)\fint_{B_R}\Big|\Sigma_E-\fint_{B_R\setminus\Ic}\Sigma_E\Big|^2\mathds1_{\R^d\setminus\Ic}\\
&\lesssim&\Big(R^d\int_{\R^d}|\chi_r|^2\Big)\Big(1+\fint_{B_R}|\nabla\psi_E|^2\Big).
\end{eqnarray*}
Since for $r\le R$ we have $R^d\int_{\R^d}|\chi_r|^2\le\|\chi\|_{\Ld^\infty(\R^d)}\|\chi\|_{\Ld^1(\R^d)}$, we conclude
\begin{equation*} 
\Big(\fint_{B_R}[\Sigma_E\mathds1_{\R^d\setminus\Ic}]_2^{2s}\Big)^\frac1s\,\lesssim_\chi\,1+
\Big(\fint_{B_{R}}[\nabla\psi_E]_2^{2s}\Big)^\frac1s+\Big|\int_{\R^d}\chi_r\Sigma_E\mathds1_{\R^d\setminus\Ic}\Big|^2.
\end{equation*}
Combined with the results on $\nabla\psi_E$ in Step~2, the conclusion follows.\qed

\section{Large-scale regularity}\label{sec:reg}

This section is devoted to the development of a large-scale regularity theory for the steady Stokes problem~\eqref{eq:test-v0}, and to the proof of Theorems~\ref{th:schauder}, \ref{th:lp-reg}, and~\ref{th:ann-reg}.
We take inspiration from the theory developed recently in the model setting of divergence-form linear elliptic equations with random coefficients~\cite{AS,AM-16,AKM1,AKM2,AKM-book,GNO-reg,DO1,JO}, and we focus more precisely on the formulation in~\cite{GNO-reg,DO1}.

\subsection{Structure of the argument}
In the formulation of~\cite{GNO-reg}, for divergence-form linear elliptic equations, the key ingredient to large-scale regularity theory is encapsulated in a perturbative statement encoding an improvement of flatness similar to what holds for harmonic functions, at the price of controlling the linear growth of an extended corrector, cf.~\cite[Proposition~1]{GNO-reg}. 
In the heterogeneous setting, we recall that Euclidean coordinates are naturally corrected by correctors, and flatness is understood as closeness to gradients of such corrected coordinates.
The following proposition is the extension of such a result in the context of the steady Stokes problem~\eqref{eq:test-v0};
the proof is postponed to Section~\ref{sec:pr-p1}.

\begin{prop}[Perturbative improvement of flatness]\label{p1}
There exists an exponent $\e\simeq1$ such that the following holds:
For all $R\gg1$, if $\nabla u$ is a solution of the following free steady Stokes problem in~$B_R$,
\begin{equation}\label{eq:Stokes-zero}
\left\{\begin{array}{ll}
-\triangle u+\nabla P=0,&\text{in $B_R\setminus\Ic$},\\
\Div(u)=0,&\text{in $B_R$},\\
\D(u)=0,&\text{in $\Ic\cap B_R$},\\
\int_{\partial I_n}\sigma(u,P)\nu=0,&\forall n:I_n \subset B_R,\\
\int_{\partial I_n}\Theta(x-x_n)\cdot\sigma(u,P)\nu=0,&\forall n:I_n \subset B_R,\,\forall\Theta\in\Md^\Skew,
\end{array}\right.
\end{equation}
then there exists a matrix $E_0\in\Md_0$
such that for all $4\le r\le R$,
\begin{equation}\label{e.p1.1}
\fint_{B_r}|\nabla u-(\nabla\psi_{E_0}+E_0)|^2
\,\lesssim\,\Big((\tfrac{r}{R})^2+(\tfrac{R}{r})^{d+2}(1\wedge\gamma_R)^{2\e}\Big)\fint_{B_R}|\!\D(u)|^2,
\end{equation}
where we have set for abbreviation,
\begin{equation}\label{e.p1.2}
  \gamma_R \, :=\, \sup_{L\ge R}\,\frac{1}{L}\bigg(1+\fint_{B_L}\Big|(\psi,\zeta)-\fint_{B_L}(\psi,\zeta)\Big|^2\bigg)^\frac12.
\end{equation}
Moreover, the following non-degeneracy property holds for all $E\in\Md_0$,
\begin{equation}\label{e.p1.3}
(1-C\gamma_R) |E|\,\lesssim\, \Big(\fint_{B_{R/2}}|\nabla\psi_E+E|^2\Big)^\frac12\, \lesssim\, (1+\gamma_R)|E|.
\qedhere
\end{equation}
\end{prop}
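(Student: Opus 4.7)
The plan is to follow the homogenization-based improvement-of-flatness strategy of~\cite{GNO-reg,DO1}, adapted to the Stokes setting with rigid particles via the two-scale analysis developed in Section~\ref{sec:homog} and in~\cite{D-20}. Without loss of generality one may assume $r\le R/4$, as otherwise \eqref{e.p1.1} is trivial. The idea is that on $B_{R/2}$ the heterogeneous solution $u$ is close, in a sense quantified by $\gamma_R$, to the solution $\bar u$ of a constant-coefficient homogenized Stokes system; and on small sub-balls $\bar u$ is in turn well approximated by an affine map, which is then heterogenized by adding the corrector $\psi_{E_0}$.

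Step~1 (homogenized approximation). I will introduce the unique weak solution $(\bar u,\bar P)$ of the constant-coefficient Stokes system $-\Div(2\Bb\D(\bar u))+\nabla\bar P=0$, $\Div(\bar u)=0$ in $B_{R/2}$, with Dirichlet data $\bar u=u$ on $\partial B_{R/2}$ (up to replacing $R/2$ by a well-chosen nearby slice so that the trace of $u$ is controlled). Since $\Bb$ is positive definite on $\Md_0^\Sym$, cf.~Theorem~\ref{th:hom-quant}, standard interior regularity for constant-coefficient Stokes systems yields analyticity of $\bar u$ inside $B_{R/2}$ together with the quadratic Campanato decay
\begin{equation*}
\fint_{B_r}|\D(\bar u)-E_0|^2\,\lesssim\,(r/R)^2\,\fint_{B_{R/2}}|\D(\bar u)|^2\,\lesssim\,(r/R)^2\,\fint_{B_R}|\D(u)|^2,
\end{equation*}
where $E_0:=\D(\bar u)(0)\in\Md_0^\Sym$ and the last inequality uses the energy inequality for $\bar u$.

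Step~2 (two-scale expansion and error estimate). Following the construction of Section~\ref{sec:homog}, I will build a modified two-scale expansion $w:=\bar u+\sum_{E\in\Ec}\chi_R\,\psi_E\,\partial_E\bar u$, where $\chi_R$ is a cutoff supported away from $\partial B_{R/2}$ and each factor $\chi_R\psi_E\partial_E\bar u$ is surgically modified on each particle so as to preserve simultaneously $\Div(w)=0$ outside the particles and $\D(w)=0$ on the inclusions. Using the extended flux corrector $\zeta_E$ from Lemma~\ref{lem:ext-cor} to re-express the flux of the expansion as a divergence, the difference $u-w$ solves a Stokes-type system whose right-hand side has divergence structure with integrand of the form $\nabla^2\bar u\otimes\big[(\psi,\zeta)-\fint_{B_{R/2}}(\psi,\zeta)\big]$. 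Testing with $u-w$, controlling the pressure via Lemma~\ref{lem:pres}, and invoking the constant-coefficient bound $\|\nabla^2\bar u\|_{\Ld^\infty(B_{R/2})}\lesssim R^{-1}\big(\fint_{B_R}|\D(u)|^2\big)^{1/2}$, I will obtain $\fint_{B_{R/2}}|\nabla(u-w)|^2\lesssim\gamma_R^2\,\fint_{B_R}|\D(u)|^2$. Combined with the affine approximation from Step~1 this gives \eqref{e.p1.1} with $\gamma_R^2$ in place of $(1\wedge\gamma_R)^{2\e}$; a Meyers-type interpolation based on Theorem~\ref{th:meyers-ann} between this $\Ld^2$-in-$\gamma_R$ bound and the trivial Caccioppoli bound then upgrades the prefactor to $(1\wedge\gamma_R)^{2\e}$ at the price of the announced $(R/r)^{d+2}$ polynomial loss.

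For \eqref{e.p1.3}, the upper bound follows from Caccioppoli applied to the corrector equation together with the control $\fint_{B_R}|\psi_E-\fint_{B_R}\psi_E|^2\le R^2\gamma_R^2|E|^2$ coming from~\eqref{e.p1.2} and the linearity $E\mapsto\psi_E$. For the lower bound, writing $\fint_{B_{R/2}}\D(\psi_E+Ex)=E+\fint_{B_{R/2}}\D(\psi_E)$ and bounding the last term by $C\gamma_R|E|$ via a trace-Poincar\'e argument gives the claim by Jensen's inequality. The main Stokes-specific obstacles, relative to the divergence-form elliptic blueprint of~\cite{GNO-reg,DO1}, are the construction of a two-scale expansion compatible with both the fluid incompressibility and the particle rigidity (requiring the local surgery inclusion-by-inclusion of Section~\ref{sec:homog}) and the pressure treatment in the resulting energy estimate via Lemma~\ref{lem:pres}; the Meyers-type interpolation producing the exponent $\e>0$ then follows the scheme of~\cite{DO1}.
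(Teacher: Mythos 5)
Your overall architecture (homogenized Dirichlet approximation on a large ball, interior regularity for the constant-coefficient Stokes system, two-scale expansion with rigidity surgery, pressure control, Campanato-type reduction to $B_r$) matches the paper's, and you correctly identify the Stokes-specific obstacles. However, the quantitative heart of your Step~2 contains a genuine gap: the claimed intermediate estimate $\fint_{B_{R/2}}|\nabla(u-w)|^2\lesssim\gamma_R^2\,\fint_{B_R}|\!\D(u)|^2$ is false. Your expansion $w=\bar u+\chi_R\psi_E\partial_E\bar u$ necessarily involves a cutoff $\chi_R$ vanishing in a boundary layer of width $\rho$, and testing the equation for $u-w$ produces the term $\int_{\{\chi_R\ne1\}}|\nabla\bar u|^2$, which carries \emph{no} smallness in $\gamma_R$: near $\partial B_{R/2}$ the function $\nabla\bar u$ is only $\Ld^2$ (its interior sup-bound degenerates like $(\rho/R)^{-d}$), so this boundary-layer contribution is at best $O(1)\cdot\fint_{B_R}|\nabla u|^2$ in general, and even in the periodic case the unweighted two-scale expansion error is only $O(\sqrt{1/R})$ in energy, not $O(\gamma_R)=O(1/R)$. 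This is precisely why the paper runs the energy estimate with the Hardy weight $\mu_{R,\e}(x)=(1-|x|/R)^{\e/2}$ (Steps~2--3 of its proof), which converts the boundary-layer term into $(\rho/R)^{\e}$ while the corrector terms contribute $(\rho/R)^{-d-2}\gamma_R^2$; optimizing over $\rho$ yields $(1\wedge\gamma_R)^{2\e}$ after renaming $\e$. The weight in turn forces a \emph{weighted} pressure estimate via the Muckenhoupt--Bogovskii construction of Lemma~\ref{lem:muck}, which your unweighted use of Lemma~\ref{lem:pres} does not replace.

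Relatedly, your proposed ``Meyers-type interpolation'' is both misplaced and unnecessary in the direction you describe: if the $\gamma_R^2$ bound were true, then on the nontrivial range $\gamma_R\le1$ one has $\gamma_R^2\le(1\wedge\gamma_R)^{2\e}$ for $\e\le1$, so no upgrade would be needed (and when $\gamma_R\ge1$ the estimate is a trivial consequence of Caccioppoli). The exponent $\e$ in \eqref{e.p1.1} is not of Meyers origin; it is the Hardy-weight exponent (constrained by $\e\ll K^{-1/2}$ and by the $A_2$/integrability conditions $\e<\tfrac1{2d}$), and the $(R/r)^{d+2}$ loss arises when converting the weighted integral of $|\nabla w|^2$ over $D_R$ into an average over the small ball $B_r$, not from interpolation. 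To repair the proof you would need to reproduce the weighted energy and weighted pressure estimates; the remaining ingredients of your sketch (interior regularity, Caccioppoli iteration, and the Poincar\'e-based non-degeneracy \eqref{e.p1.3}) are essentially sound.
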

Although the proof of  Proposition~\ref{p1} follows the main steps as the proof of~\cite[Proposition~1]{GNO-reg}, it differs in two 
significant respects.
First, the natural two-scale expansion is not rigid inside the inclusions, which makes energy estimates more involved and requires some
local surgery. Second, and more importantly, a suitable control is needed on the pressure of the two-scale expansion error, which is made particularly subtle due to the crucial use of weighted norms.
Weighted pressure estimates are obtained based on the following weighted version of Bogovskii's standard construction; the proof is postponed to Section~\ref{sec:pr-p1+}.

\begin{lem}[Weighted Bogovskii construction]\label{lem:muck}
Given a domain $D\subset B_R$ that is star-shaped with respect to every point in $B_{R_0}$, for some $0<R_0 \le R$,
consider a weight $\mu\in C^\infty(\R^d;[0,1])$ such that $\mu^{-1}$ belongs to the Muckenhoupt class~$A_2$ and $\mu^{-1}\in\Ld^{d/2}(D)$ (or $\mu^{-1}\in\Ld^{s}(D)$ for some $s>1$ in the case $d=2$).
Then, for all $F\in\Ld^2(D)$ with $\int_D\mu F=0$, there exists $S\in H^1_0(D)^d$ such that
\begin{gather*}
\Div (S)\,=\,\mu F,\quad\text{in $D$},\\
\int_D\mu^{-1}|\nabla S|^2\,\lesssim\,\int_D\mu|F|^2,
\end{gather*}
where the multiplicative constant only depends on $d$, on $R/R_0$, on the $A_2$-norm of~$\mu^{-1}$, and on $\fint_D\mu^{-d/2}$ (or on $s>1$ and on $\fint_D\mu^{-s}$ in the case $d=2$).
\end{lem}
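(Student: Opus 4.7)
\emph{Plan.} The natural strategy is to construct $S$ via the classical Bogovskii formula and then establish the weighted gradient bound by appealing to Muckenhoupt weighted estimates. Fix a cutoff $\omega\in C^\infty_c(B_{R_0})$ with $\int_{\R^d}\omega=1$, let $B$ denote the Bogovskii operator associated with $\omega$ and the star-shape structure of $D$ (as in Galdi or Dur\'an--Muschietti), and set $S:=B(\mu F)$. Since $\int_D\mu F=0$ by hypothesis, the standard construction yields $S\in H^1_0(D)^d$ satisfying $\Div(S)=\mu F$ pointwise in $D$. The task is thereby reduced to proving the weighted gradient estimate $\int_D\mu^{-1}|\nabla S|^2\lesssim\int_D\mu|F|^2$.

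The key analytic ingredient is the standard kernel decomposition $\nabla B=T_{\mathrm{CZ}}+T_R$ going back to Bogovskii and made systematic in Dur\'an--Muschietti, where $T_{\mathrm{CZ}}$ is a Calder\'on--Zygmund operator with kernel of order $|x-y|^{-d}$ (whose standard CZ constants track the geometric ratio $R/R_0$), while $T_R$ has kernel pointwise controlled by $C|x-y|^{1-d}$ times a cutoff in $B_{R_0}$, coming from derivatives of $\omega$. Since $\mu^{-1}\in A_2$, the Coifman--Fefferman weighted theorem yields boundedness of $T_{\mathrm{CZ}}$ on $\Ld^2(\mu^{-1}dx)$; applied to the input $\mu F$ it gives
\begin{equation*}
\int_D\mu^{-1}\,|T_{\mathrm{CZ}}(\mu F)|^2\,\lesssim\,\int_D\mu^{-1}\,(\mu F)^2\,=\,\int_D\mu\,|F|^2,
\end{equation*}
with multiplicative constant depending only on $d$, on $R/R_0$, and on the $A_2$-norm of $\mu^{-1}$.

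For the remainder piece, the pointwise kernel bound yields $|T_R g|\lesssim I_1(|g|)$ with $I_1$ the Riesz potential of order $1$. Combining H\"older's inequality with the Hardy--Littlewood--Sobolev embedding $I_1:\Ld^2(\R^d)\to\Ld^{2d/(d-2)}(\R^d)$ (for $d\ge 3$) gives
\begin{equation*}
\int_D\mu^{-1}\,|T_R(\mu F)|^2\,\le\,\|\mu^{-1}\|_{\Ld^{d/2}(D)}\,\|I_1(\mu F)\|_{\Ld^{2d/(d-2)}(D)}^2\,\lesssim\,\|\mu^{-1}\|_{\Ld^{d/2}(D)}\int_D\mu\,|F|^2,
\end{equation*}
where the last step uses $\|\mu F\|_{\Ld^2(D)}^2\le\int_D\mu|F|^2$ (since $\mu\le 1$). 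The two-dimensional case is handled by the same argument with the alternative endpoint $I_1:\Ld^p\to\Ld^{2s'}$ for $p<2$ chosen to pair via H\"older with $\|\mu^{-1}\|_{\Ld^s(D)}$, $s>1$. Collecting the two bounds and observing that $\mu\le 1$ implies $\mu^{-1}\ge 1$---so that the weighted estimate automatically upgrades to the unweighted $H^1_0(D)$ membership of $S$---closes the argument.

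The main obstacle I expect is the careful bookkeeping of the standard Calder\'on--Zygmund constants of $T_{\mathrm{CZ}}$ in terms of $R/R_0$, which requires the explicit kernel computation of Dur\'an--Muschietti and Dur\'an--L\'opez Garc\'ia; a subordinate technical point is the density argument needed to justify the pointwise identity $\Div(S)=\mu F$ and the above weighted inequalities for the full class of $F\in\Ld^2(D)$ subject to the constraint $\int_D\mu F=0$.
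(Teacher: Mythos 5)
Your proposal is correct and follows essentially the same route as the paper: Bogovskii's formula, a decomposition of $\nabla S$ into a Calder\'on--Zygmund part (handled by weighted $\Ld^2$ bounds for $A_2$ weights), a weakly singular part with kernel $O(|x-y|^{1-d})$ (handled by Hardy--Littlewood--Sobolev plus H\"older against $\|\mu^{-1}\|_{\Ld^{d/2}(D)}$, with the modified exponent in $d=2$), and the normalization/scaling to track $R/R_0$. The only omission is the third, purely local term in the Bogovskii gradient formula, of the form $\mu(x)F(x)\int_D\frac{(x_i-y_i)(x_j-y_j)}{|x-y|^2}\alpha(y)\,dy$, which is bounded pointwise by $\mu|F|$ and hence contributes $\int_D\mu^{-1}\mu^2|F|^2\le\int_D\mu|F|^2$ trivially.
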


With Proposition~\ref{p1} at hand, we may now turn to the proof of Theorems~\ref{th:schauder}--\ref{th:ann-reg}, for which we heavily lean on~\cite{GNO-reg,DO1}.
First, following~\cite{GNO-reg}, we encapsulate a quantitative (averaged) control on the sublinear growth of the extended corrector by considering the minimal radius~$R$ such that $\gamma_R$ in~\eqref{e.p1.2} is small enough:
more precisely, given a constant~$C_0\ge1$ (to be fixed large enough),
we define the \emph{minimal radius} $r_*$ as the following random field,
\begin{equation}\label{e.def-r*} 
r_*(x)\,:=\,\inf\bigg\{R>0~:~ \frac1{\ell^2} \fint_{B_\ell(x)} \Big|(\psi,\zeta)-\fint_{B_\ell(x)} (\psi,\zeta)\Big|^2 \le \frac1{C_0},~~\forall \ell\ge R \bigg\}.
\end{equation}
Stationarity of~$r_*$ follows from stationarity of $(\nabla\psi,\nabla\zeta)$.
Almost sure finiteness of~$r_*$ follows from the sublinearity of $(\psi,\zeta)$ at infinity, cf.~Lemmas~\ref{lem:cor} and~\ref{lem:ext-cor}(iii). Under Assumption~\ref{Mix+}, moment bounds on $r_*$ are a direct consequence of corrector estimates of Theorem~\ref{th:cor} together with a union bound; we omit the details.

\medskip
Next, still following~\cite{GNO-reg}, in order to quantify the improvement of flatness for the solution of the steady Stokes problem, we consider the excess \eqref{eq:def-exc} of a trace-free $2$-tensor field $h$ on a ball~$D$, that is,
\begin{equation*}
\Exc(h;D)\,:=\,\inf_{E\in \Md_0 }\fint_{D} |h-(\nabla\psi_E+E)|^2,
\end{equation*}
which measures the deviation of $h$ from gradients of corrected coordinates.
In these terms, we establish the following consequence of Proposition~\ref{p1}, which quantifies the decay of the excess for solutions of the free steady Stokes problem~\eqref{eq:Stokes-zero}, proving a quantitative improvement of flatness on smaller balls.
The proof relies on Proposition~\ref{p1} together with a standard Campanato iteration;
in particular, since it is oblivious of the underlying PDE, we refer the reader to the proof of~\cite[Theorem~1]{GNO-reg} in the context of divergence-form linear elliptic equations, which applies without changing a iota.

\begin{theor}[Excess-decay estimate]\label{th:excess-decay}
Under Assumption~\ref{Hd}, for any H\"older exponent $\alpha \in (0,1)$, there exists a constant $C_\alpha\simeq_\alpha1$ such that the following holds: Let $r_*$ be defined in~\eqref{e.def-r*} with constant~$C_0$ replaced by $C_\alpha$.
For all $R\ge r_*(0)$, if $\nabla u$ is a solution of the free steady Stokes problem~\eqref{eq:Stokes-zero} in $B_R$,
then the following large-scale Lipschitz estimate holds for all $r_*(0)\le r\le R$,
\begin{equation}\label{e.exc-decay-3}
\fint_{B_r} |\nabla u|^2 \,\le\, C_\alpha \fint_{B_R}|\nabla u|^2,
\end{equation}
as well as the following large-scale $C^{1,\alpha}$ estimate for all $r_*(0)\le r\le R$,
\begin{equation*}
\Exc(\nabla u;B_r)\,\le\, C_\alpha (\tfrac r R)^{2\alpha} \Exc(\nabla u;B_R). 
\end{equation*}
In addition, the correctors enjoy the following non-degeneracy property for all $r\ge r_*(0)$ and $E\in\Md_0$,
\begin{equation*}
\frac1{C_\alpha} |E|^2 \,\le\, \fint_{B_r} |\nabla \psi_E+E|^2 \,\le\, C_\alpha |E|^2.\qedhere
\end{equation*}
\end{theor}

As a direct consequence, we may deduce a corresponding result for solutions of the steady Stokes problem~\eqref{eq:Stokes-zero} with a nontrivial right-hand side, cf.~\eqref{eq:test-v0}, as stated in~Theorem~\ref{th:schauder}.
The proof, which is identical to that of~\cite[Corollary~3]{GNO-reg}, is omitted as it only relies on Theorem~\ref{th:excess-decay} together with an energy estimate.

\medskip
Next, as a second consequence of the above, we may further deduce quenched large-scale $\Ld^p$ regularity estimates as stated  in Theorem~\ref{th:lp-reg}.
This is obtained for instance by combining the large-scale Lipschitz estimate~\eqref{e.exc-decay-3} together with Shen's dual Calder\'on--Zygmund lemma, cf.~\cite[Theorem~2.1]{Shen-07}, as done in~\cite[Section~6.1]{DO1} in the context of divergence-form linear elliptic equations. This proof further requires to replace the minimal radius $r_*$ in the above by the largest $\frac18$-Lipschitz lower bound $\underline r_*$, cf.~\cite[Section~3.7]{GNO-reg}; both satisfy the same boundedness properties and we use the same notation ``$r_*$'' in the statement.
Since this approach does not rely on the specific PDE at hand, the same proof applies without changing a iota and we do not reproduce it here.

\medskip
Finally, making a further use of Shen's dual Calder\'on--Zygmund lemma, cf.~\cite[Theorem~2.1]{Shen-07}, together with the quenched large-scale $\Ld^p$ regularity theory of Theorem~\ref{th:lp-reg} and  with  the large-scale Lipschitz estimate~\eqref{e.exc-decay-3}, the annealed regularity estimate of Theorem~\ref{th:ann-reg} easily follows as in~\cite{DO1} for $2\le q\le p<\infty$.
A duality argument yields the corresponding conclusion for $1<p\le q\le2$, and an interpolation argument allows to conclude for all $1<p,q<\infty$. The additional perturbative statement in Theorem~\ref{th:ann-reg} is already established in Theorem~\ref{th:meyers-ann}.

\subsection{Proof of Proposition~\ref{p1}}\label{sec:pr-p1}
Let $R\gg1$ be large enough and fixed. To ease notation, we assume without loss of generality $\fint_{B_R} (\psi,\zeta,\Sigma\mathds1_{\R^d\setminus\Ic})=0$.
Set $\Nc_R:=\{n:I_n^++\delta B\subset B_{R}\}$ and $\Nc_R^\circ:=\{n:(I_n^++\delta B)\cap \partial B_{R}\ne\varnothing\}$, where we recall that $I_n^+$ stands for the convex hull of $I_n$, and define
\[D_R:=\bigg(B_{R-\frac\delta2}\setminus\bigcup_{n\in\Nc_R^\circ}(I_n^++\delta B)\bigg)~+~\tfrac\delta2B.\]
In view of Assumption~\ref{Hd}, we note that
\begin{enumerate}[\quad$\bullet$]
\item $D_R$ is a $C^2$ domain (uniformly in $R$);
\item any inclusion that intersects $D_R$ is contained in $D_R$ and is at distance at least $\delta$ from~$\partial D_R$;
\item $B_{R-2-\delta}\subset D_R\subset B_R$.
\end{enumerate}
Given $4\le\rho\le\frac R4$ (the choice of which will be optimized later), we choose a smooth cut-off function $\eta_R\in C^\infty_c(\R^d;[0,1])$ such that $\eta_R=1$ in $B_{R-2\rho}$, $\eta_R=0$ outside $B_{R-\rho}$, and $|\nabla\eta_R|\lesssim\rho^{-1}$, and we further choose $\eta_R$ to be constant in the fattened inclusions $\{I_n+\frac\delta2B\}_{n\in\Nc_R}$.
Note in particular that $\eta_R$ is supported inside $D_R$.
We split the proof into five main steps.

\medskip
\step1 Two-scale expansion and representation of the error.\\
We split the proof into two further substeps.

\medskip\noindent
\substep{1.1} Construction of two-scale expansions.\\
Given a weak solution $(u,P)$ to~\eqref{eq:Stokes-zero},
let $(\hat u,\hat P)$ denote the unique weak solution of the following corresponding homogenized equation with Dirichlet data on $D_R$,
\begin{equation}\label{e.p1.4}
\quad\left\{\begin{array}{ll}
-\Div(2\Bb \D( \hat u))+\nabla\hat P=0 ,&\text{in $D_R$},\\
\Div(\hat u)=0,&\text{in $D_R$},\\
\hat u=u,&\text{on $\partial D_R$},
\end{array}\right.
\end{equation}
where we recall that the effective viscosity $\Bb$ is defined in~\eqref{eq:def-B}.
For definiteness, the pressures $P$ and $\hat P$ are chosen with $\int_{D_R} P\mathds1_{\R^d\setminus\Ic}=\int_{D_R}\hat P=0$.
Reformulating this homogenized equation as
\[-\Div(2\Bb \D( \hat u-u))+\nabla\hat P=\Div(2\Bb \D(u)),\qquad\text{in $D_R$},\]
testing with $\hat u-u\in H^1_0(D_R)^d$, and combining an energy estimate with the triangle inequality, we obtain
\begin{equation*}
\int_{D_R} |\!\D( \hat u)|^2 \,\lesssim\, \int_{D_R}|\!\D( u)|^2,
\end{equation*}
and, further using that $\Div(\hat u-u)=0$ implies $\int_{D_R}|\nabla(\hat u-u)|^2=2\int_{D_R}|\!\D(\hat u-u)|^2$,
\begin{equation}\label{e.p1.12+}
\int_{D_R} |\nabla\hat u|^2 \,\lesssim\, \int_{D_R}|\nabla u|^2.
\end{equation}
We now compare $u$ and $P$ to their respective two-scale expansions,
\[u\,\leadsto\,\hat u + \eta_R \psi_E \partial_E \hat u,\qquad P\,\leadsto\,\hat P+\eta_R\bb:\D(\hat u)+\eta_R \Sigma_E\mathds1_{\R^d\setminus\Ic} \partial_E \hat u,\]
where we use Einstein's convention of implicit summation on repeated indices and where the index $E$ runs here over an orthonormal basis $\Ec$ of $\Md_0^\Sym$.
Recall that the pressure~$P$ is only defined up to a global arbitrary constant on ${\R^d\setminus\Ic}$,
so that we may choose an arbitrary constant $P_*\in\R$ and consider the pressure $P'=P+P_*$ on ${\R^d\setminus\Ic}$.
In addition we choose arbitrary constants $\{P_n\}_n\subset\R$ and extend the pressure inside the inclusions by setting $P'|_{I_n}=P_n$. We thus define in the whole domain $D_R$,
\begin{equation}\label{eq:cst-press}
P'\,:=\,(P+P_*)\mathds1_{{\R^d\setminus\Ic}}+\sum_{n\in\Nc_R}P_n \mathds 1_{I_n},
\end{equation}
where the constants $P_*$ and $\{P_n\}_n$ will be suitably chosen later.
We then consider the following two-scale expansion errors in $D_R$,
\begin{equation}\label{eq:def-wQ}
w\,:=\,u-\hat u - \eta_R \psi_E \partial_E \hat u,\qquad
Q\,:=\,P'-\hat P-\eta_R\bb:\D(\hat u)-\eta_R \Sigma_E\mathds1_{\R^d\setminus\Ic} \partial_E \hat u.
\end{equation}

\medskip
\substep{1.2}
Proof that $(w,Q)$ satisfies  in the weak sense in $D_R$
\begin{align}\label{e.p1.5}
&-\triangle w+\nabla Q\,=\,
-\sum_{n\in\Nc_R}\delta_{\partial I_n}\sigma(u,P+P_*-P_n)\nu-\Div\big((\eta_R\partial_E\hat u)J_E\mathds1_\Ic\big)\\
&\quad+\Div\Big(2(1-\eta_R)(\Id-\Bb)\D(\hat u)+(2\psi_E\otimes_s-\zeta_E)\nabla(\eta_R\partial_E\hat u)-\Id(\psi_{E}\cdot\nabla)(\eta_R\partial_E\hat u)\Big).\nonumber
\end{align}
By definition of $w,Q$, expanding the gradient and reorganizing the terms, we find
\begin{multline*}
-\triangle w+\nabla Q\,=\,-\triangle u+\nabla P'+\triangle \hat u-\nabla\hat P-\nabla\big(\eta_R\bb:\D(\hat u)\big)
+\Div\big(\psi_E\otimes\nabla(\eta_R\partial_E\hat u)\big)\\
+(\eta_R\partial_E\hat u)\,\Div\big(\nabla\psi_E-\Sigma_E\mathds1_{\R^d\setminus\Ic}\Id\big)+\big(\nabla\psi_E-\Sigma_E\mathds1_{\R^d\setminus\Ic}\Id\big)\nabla(\eta_R\partial_E\hat u).
\end{multline*}
Further using that $\Div(\psi_E)=0$, and using Leibniz' rule, this can be rewritten as
\begin{multline*}
-\triangle w+\nabla Q\,=\,-\triangle u+\nabla P'+\triangle \hat u-\nabla\hat P-\nabla\big(\eta_R\bb:\D(\hat u)\big)\\
+\Div\big(2\psi_E\otimes_s\nabla(\eta_R\partial_E\hat u)\big)-\nabla\big(\psi_{E}\cdot\nabla(\eta_R\partial_E\hat u)\big)\\
+\Div\Big((\eta_R\partial_E\hat u)\big(2\D(\psi_E)-\Sigma_E\mathds1_{\R^d\setminus\Ic}\Id\big)\Big).
\end{multline*}
Since $\Div(\hat u)=0$, we may decompose
\[\triangle\hat u=\Div(2\D(\hat u))=\Div\big(2(1-\eta_R)\D(\hat u)\big)+\Div\big((\eta_R\partial_E\hat u)2E\big).\]
Inserting this into the above, and writing $2(\D(\psi_E)+E)-\Sigma_E\mathds1_{\R^d\setminus\Ic}=J_E\mathds1_{\R^d\setminus\Ic}$ in terms of the extended flux $J_E$, cf.~Lemma~\ref{lem:ext-cor}, we obtain
\begin{multline}\label{eq:pre-wQ-eqn}
-\triangle w+\nabla Q\,=\,-\triangle u+\nabla P'+\Div\big(2(1-\eta_R)\D(\hat u)\big)-\nabla\hat P-\nabla\big(\eta_R\bb:\D(\hat u)\big)\\
+\Div\big(2\psi_E\otimes_s\nabla(\eta_R\partial_E\hat u)\big)-\nabla\big(\psi_{E}\cdot\nabla(\eta_R\partial_E\hat u)\big)
+\Div\big((\eta_R\partial_E\hat u)J_E\mathds1_{\R^d\setminus\Ic}\big).
\end{multline}
Since $\Div(J_E)=0$, we have
\[\Div\big((\eta_R\partial_E\hat u)J_E\mathds1_{\R^d\setminus\Ic}\big)\,=\,J_E\nabla(\eta_R\partial_E\hat u)-\Div\big((\eta_E\partial_E\hat u)J_E\mathds1_\Ic\big),\]
and thus, further recalling $\expec{J_E}=2\Bb E+(\bb:E)\Id$, writing $J_E-\expec{J_E}=\Div(\zeta_E)$, and using the skew-symmetry of $\zeta_E$, cf.~Lemma~\ref{lem:ext-cor}, we find
\begin{multline*}
\Div\big((\eta_R\partial_E\hat u)J_E\mathds1_{\R^d\setminus\Ic}\big)\,=\,\Div(2\eta_R\Bb\D(\hat u))+\nabla(\eta_R\bb:\D(\hat u))\\
-\Div(\zeta_E\nabla(\eta_R\partial_E\hat u))-\Div\big((\eta_R\partial_E\hat u)J_E\mathds1_\Ic\big).
\end{multline*}
Inserting this into~\eqref{eq:pre-wQ-eqn}, and recalling that equation~\eqref{e.p1.4} yields $-\Div(2\Bb\D(\hat u))+\nabla\hat P=0$, we deduce
\begin{multline*}
-\triangle w+\nabla Q\,=\,-\triangle u+\nabla P'+\Div\big(2(1-\eta_R)(\Id-\Bb)\D(\hat u)\big)\\
+\Div\big((2\psi_E\otimes_s-\zeta_E)\nabla(\eta_R\partial_E\hat u)\big)-\nabla\big(\psi_{E}\cdot\nabla(\eta_R\partial_E\hat u)\big)
-\Div\big((\eta_R\partial_E\hat u)J_E\mathds1_\Ic\big).
\end{multline*}
Finally, since equation~\eqref{e:up-whole0} for $(u,P)$ implies of $(u,P')$ on $D_R$
\[-\triangle u+\nabla P'=-\sum_{n\in\Nc_R}\delta_{\partial I_n}\sigma(u,P+P_*-P_n)\nu,\]
 the claim~\eqref{e.p1.5} follows.

\medskip
\step2 Weighted energy estimate for the two-scale expansion error:
considering the following weight function as in~\cite{GNO-reg},
\begin{equation}\label{eq:def-muReps}
\mu_{R,\e}:~B_R \to [0,1]:~ x\mapsto(1-\tfrac{|x|}{R})^{\frac\e2},
\end{equation}
we prove, for all $K\gg1$ and $\e\ll K^{-1/2}$,
\begin{multline}\label{e.p1.8}
\int_{D_R} \mu_{R,\e}^2|\nabla w|^2
\,\lesssim\,\frac1K\int_{D_R}\mu_{R,\e}^2Q^2+K\int_{D_R}(1-\eta_R)^2\mu_{R,\e}^2|\nabla\hat u|^2\\
+K\Big(\sup_{D_R}|\nabla(\eta_R\nabla\hat u)|^2\Big)\int_{D_R}(1+|(\psi,\zeta,\nabla\psi,\Sigma\mathds1_{\R^d\setminus\Ic})|^2).
\end{multline}
The main difficulty is that neither $\hat u$ nor~$\mu_{R,\e}$ is constant inside the inclusions, which prohibits us from easily taking advantage of the boundary conditions for~$u$ and~$\psi_E$ in the estimate.
To circumvent this issue, we use the following truncation maps $T_0,T_1$: for all $g\in C^\infty_b(D_R)$,
\begin{eqnarray}\label{eq:def-T0T1}
T_0[g](x)&:=&(1-\chi(x))g(x)+\sum_{n\in\Nc_R}\chi_n(x)\Big(\fint_{I_n+\frac\delta2B}g\Big),\\
T_1[g](x)&:=&(1-\chi(x))g(x)+\sum_{n\in\Nc_R}\chi_n(x)\bigg(\Big(\fint_{I_n+\frac\delta2B}g\Big)+\Big(\fint_{I_n+\frac\delta2B}\nabla g\Big)(x-x_n)\bigg),\nonumber
\end{eqnarray}
where for all $n$ we have chosen a cut-off function $\chi_n\in C^\infty_c(\R^d;[0,1])$ with
\[\chi_n|_{I_n+\frac\delta4B}= 1, \qquad\chi_n|_{\R^d\setminus(I_n+\frac\delta2 B)}=0,\qquad|\nabla \chi_n|+|\nabla^2\chi_n|\lesssim 1,\]
and where we have set for abbreviation $\chi:=\sum_{n\in\Nc_R}\chi_n$.
In these terms,
we consider the following modification of the weight $\mu_{R,\e}$ and of the two-scale expansion error $(w,Q)$,
\begin{eqnarray}
\tilde \mu_{R,\e}&:=&T_0[\mu_{R,\e}],\nonumber\\
\tilde w&:=&u-T_1[\hat u]-\eta_R\psi_ET_0[\partial_E\hat u],\nonumber\\
\tilde Q&:=&P'-T_0[\hat P]-\eta_R\bb:T_0[\D(\hat u)]-\eta_R\Sigma_E\mathds1_{\R^d\setminus\Ic}T_0[\partial_E\hat u].\label{eq:trunc-wQ}
\end{eqnarray}
Note that $T_1[\hat u]=\hat u=u$ on $\partial D_R$, and thus $\tilde w\in H^1_0(D_R)^d$.
Testing equation~\eqref{e.p1.5} for $w$ with the test function $\tilde\mu_{R,\e}^2\tilde w\in H^1_0(D_R)^d$, we find
\begin{equation}\label{eq:J0123}
J_0\,=\,J_1+J_2+J_3,
\end{equation}
in terms of
\begin{eqnarray*}
J_0 &:=&\int_{D_R}\nabla(\tilde\mu_{R,\e}^2\tilde w):(\nabla w- Q\Id),\\
J_1&:=&-\sum_{n\in\Nc_R}\int_{\partial I_n}\tilde\mu_{R,\e}^2\tilde w\cdot\sigma(u,P+P_*-P_n)\nu+\sum_{n\in\Nc_R}\int_{I_n}(\eta_R\partial_E\hat u)\,\nabla(\tilde\mu_{R,\e}^2\tilde w):J_E,\\
J_2&:=&-2\int_{D_R}(1-\eta_R)\nabla(\tilde\mu_{R,\e}^2\tilde w):(\Id-\Bb)\D(\hat u),\\
J_3&:=&-\int_{D_R}\nabla(\tilde\mu_{R,\e}^2\tilde w):\Big((2\psi_E\otimes_s-\zeta_E)\nabla(\eta_R\partial_E\hat u)-\Id(\psi_E\cdot\nabla)(\eta_R\partial_E\hat u)\Big).
\end{eqnarray*}
It remains to estimate these  terms, and we split the proof of~\eqref{e.p1.8} into four further substeps.

\medskip
\substep{2.1} Lower bound on $J_0$: for all $K\gg1$ and $0<\e\ll K^{-1/2}$,
\begin{multline}\label{e.p1.6}
J_0\,\ge\, \frac12 \int_{D_R} \tilde \mu_{R,\e}^2|\nabla w|^2\\
-\frac1K \int_{D_R} \tilde \mu_{R,\e}^2 Q^2
-K\int_{D_R} \tilde \mu_{R,\e}^2|\nabla(w-\tilde w)|^2
-K\int_{D_R} \tilde \mu_{R,\e}^2\,\Div(\tilde w)^2.
\end{multline}
Expanding the gradient in the definition of $J_0$ yields
\begin{multline*}
J_0\,=\,\int_{D_R} \tilde \mu_{R,\e}^2 \nabla \tilde w:\nabla w+\int_{D_R}2\tilde \mu_{R,\e}(\tilde w\otimes \nabla \tilde \mu_{R,\e}) : \nabla w\\
-\int_{D_R}\Big( \tilde \mu_{R,\e}^2\Div (\tilde w)+2\tilde \mu_{R,\e}\tilde w\cdot \nabla \tilde \mu_{R,\e}\Big)Q.
\end{multline*}
Adding and subtracting $\nabla w$ to $\nabla \tilde w$, we deduce by Young's inequality, for all $K\ge1$,
\begin{multline}\label{e.p1.6-00}
J_0
\,\ge\, \Big(1-\frac{1}{K}\Big)\int_{D_R} \tilde \mu_{R,\e}^2|\nabla  w|^2
-\frac1{K}\int_{D_R}\tilde \mu_{R,\e}^2Q^2\\
-4K \int_{D_R} |\nabla \tilde \mu_{R,\e}|^2|\tilde w|^2
-\frac K2\int_{D_R} \tilde \mu_{R,\e}^2|\nabla(w-\tilde w)|^2
-\frac K2\int_{D_R} \tilde \mu_{R,\e}^2\,\Div(\tilde w)^2.
\end{multline}
Since $\tilde \mu_{R,\e}$ satisfies for all $x\in B_{R}$,
$$
\tilde \mu_{R,\e}(x) \simeq \mu_{R,\e}(x), \qquad |\nabla \tilde \mu_{R,\e}(x)| \lesssim |\nabla \mu_{R,\e}(x)| \simeq \tfrac{\e}{R} \big(1-\tfrac{|x|}{R}\big)^{\frac\e2-1},
$$
the following estimate follows from Hardy's inequality in form of e.g.~\cite[Estimate~(88)]{GNO-reg}: given $0<\e\le\frac12$, there holds for all $g\in H^1_0(B_{R})$,
\begin{equation}\label{hardy}
\int_{B_{R}} |\nabla \tilde \mu_{R,\e}|^2 |g|^2 \,\lesssim\, {\e^2} \int_{B_{R}}\tilde \mu_{R,\e}^2 |\nabla g|^2.
\end{equation}
Extending $\tilde w$ by $0$ outside $D_R$ and applying this inequality, we find
\[\int_{D_R} |\nabla \tilde \mu_{R,\e}|^2|\tilde w|^2~\lesssim~ \e^2\int_{D_R}\tilde\mu_{R,\e}^2|\nabla\tilde w|^2.\]
Inserting this into~\eqref{e.p1.6-00}, the claim~\eqref{e.p1.6} follows for $K\ge3$ and $K\e^2\ll1$.

\medskip
\substep{2.2} Upper bound on $J_1$: for all $K\ge1$,
\begin{multline}\label{eq:bnd-J1}
|J_1|\,\lesssim\,
\frac1K\int_{D_R}\tilde\mu_{R,\e}^2|\nabla\tilde w|^2
+\frac1K\int_{D_R}\tilde\mu_{R,\e}^2|\tilde Q|^2\\
+K\Big(\sup_{D_R}|\nabla(\eta_R\nabla\hat u)|^2\Big)\int_{D_R}(1+|(\nabla\psi,\Sigma\mathds1_{\R^d\setminus\Ic})|^2)
+K\int_{D_R}(1-\eta_R)^2\tilde\mu_{R,\e}^2|\nabla\hat u|^2.
\end{multline}
We examine separately the two terms in the definition of $J_1=J_{1,1}+J_{1,2}$,
\begin{eqnarray*}
J_{1,1}&:=&-\sum_{n\in\Nc_R}\int_{\partial I_n}\tilde\mu_{R,\e}^2\tilde w\cdot\sigma(u,P+P_*-P_n)\nu,\\
J_{1,2}&:=&\sum_{n\in\Nc_R}\int_{I_n}(\eta_R\partial_E\hat u)\,\D(\tilde\mu_{R,\e}^2\tilde w):J_E,
\end{eqnarray*}
and we start with $J_{1,1}$.
Since $\tilde \mu_{R,\e}$ and $\eta_R$ are constant in the inclusions, and since for all $n\in\Nc_R$ we have
\begin{equation}\label{eq:Dtildew-incl}
\D(\tilde w)=-(1-\eta_R)\Big(\fint_{I_n+\frac\delta2B}\D(\hat u)\Big),\qquad\text{in $I_n$},
\end{equation}
we may use the boundary conditions for $u$ to the effect of
\begin{equation*}
J_{1,1}\,=\,\sum_{n\in\Nc_R}\big((1-\eta_R)\tilde\mu_{R,\e}^2\big)(x_n)\Big(\fint_{I_n+\frac\delta2B}\D(\hat u)\Big):\int_{\partial I_n}\sigma(u,P+P_*-P_n)\nu\otimes(x-x_n).
\end{equation*}
Using Stokes' formula in the form $\int_{\partial I_n}\nu\otimes(x-x_n)=|I_n|\Id$, together with the constraint $\Div(\hat u)=0$ that we use in the form
$(\fint_{I_n+\frac \delta 2 B} \D(\hat u)):\Id=0$, we can subtract any constant to the pressure in the above expression, so that in particular
\begin{multline}\label{eq:formJ11}
J_{1,1}\,=\,\sum_{n\in\Nc_R}\big((1-\eta_R)\tilde\mu_{R,\e}^2\big)(x_n)\Big(\fint_{I_n+\frac\delta2B}\D(\hat u)\Big)\\
:\int_{\partial I_n}\sigma\big(u,P+P_*-T_0[\hat P]-\eta_R\bb:T_0[\D(\hat u)]\big)\nu\otimes(x-x_n).
\end{multline}
We turn to $J_{1,2}$.
Decomposing $\partial_E\hat u=(\partial_E\hat u-T_0[\partial_E\hat u])+T_0[\partial_E\hat u]$, using that~$T_0[\partial_E\hat u]$, $\tilde \mu_{R,\e}$, and $\eta_R$ are constant in the inclusions, that $\tilde w$ is affine in the inclusions, and using~\eqref{eq:Dtildew-incl} again,
we find
\begin{multline*}
J_{1,2}\,=\,\sum_{n\in\Nc_R}\int_{I_n}\eta_R\tilde\mu_{R,\e}^2\big(\partial_E\hat u-T_0[\partial_E\hat u]\big)\D(\tilde w):J_E\\
-\sum_{n\in\Nc_R}\big((1-\eta_R)\eta_R\tilde\mu_{R,\e}^2T_0[\partial_E\hat u]\big)(x_n)\Big(\fint_{I_n+\frac\delta2B}\D(\hat u)\Big):\int_{I_n}J_E.
\end{multline*}
Writing $J_E|_{I_n}=\sigma(\psi_E^n,\Sigma_E^n)$ with $(\psi_E^n,\Sigma_E^n)$ defined in~\eqref{eq:sol-psiEn}, cf.~\eqref{eq:def-J}, using Stokes' formula,
and recalling that $\sigma(\psi_E^n,\Sigma_E^n)\nu=\sigma(\psi_E+Ex,\Sigma_E)\nu$ on $\partial I_n$, cf.~\eqref{eq:sol-psiEn},
we deduce
\begin{multline*}
J_{1,2}\,=\,\sum_{n\in\Nc_R}\int_{I_n}\eta_R\tilde\mu_{R,\e}^2\big(\partial_E\hat u-T_0[\partial_E\hat u]\big)\D(\tilde w):\sigma(\psi_E^n,\Sigma_E^n)\\
-\sum_{n\in\Nc_R}\big((1-\eta_R)\eta_R\tilde\mu_{R,\e}^2T_0[\partial_E\hat u]\big)(x_n)\Big(\fint_{I_n+\frac\delta2B}\D(\hat u)\Big):\int_{\partial I_n}\sigma(\psi_E+Ex,\Sigma_E)\nu\otimes(x-x_n).
\end{multline*}
Combining this with~\eqref{eq:formJ11}, and reorganizing the terms, we obtain
\begin{equation*}
J_1\,=\,J'_{1,1}+J'_{1,2},
\end{equation*}
in terms of
\begin{eqnarray*}
J'_{1,1}&=&\sum_{n\in\Nc_R}\int_{I_n}\eta_R\tilde\mu_{R,\e}^2\big(\partial_E\hat u-T_0[\partial_E\hat u]\big)\D(\tilde w):\sigma(\psi_E^n,\Sigma_E^n),\\
J'_{1,2}&=&\sum_{n\in\Nc_R}\big((1-\eta_R)\tilde\mu_{R,\e}^2\big)(x_n)\Big(\fint_{I_n+\frac\delta2B}\D(\hat u)\Big)\\
&&\hspace{2cm}:\int_{\partial I_n}\Big(\sigma\big(u,P+P_*-T_0[\hat P]-\eta_R\bb:T_0[\D(\hat u)]\big)\\
&&\hspace{5 cm}-\eta_R T_0[\partial_E\hat u]\sigma(\psi_E+Ex,\Sigma_E)\Big)\nu\otimes(x-x_n).
\end{eqnarray*}
We separately estimate $J_{1,1}'$ and $J_{1,2}'$, and we start with the former.
Using~\eqref{eq:bnd-psinSign} and noting that $|\nabla\hat u-T_0[\nabla\hat u]|=|\nabla\hat u-\fint_{I_n+\frac\delta2B}\nabla\hat u|\lesssim\sup_{I_n+\frac\delta2B}|\nabla^2\hat u|$ on $I_n$ and that $\eta_R$ is constant in $I_n$, we find
\begin{equation}\label{eq:bnd-L1}
|J'_{1,1}|\,\lesssim\,\Big(\sup_{D_R}|\nabla(\eta_R\nabla\hat u)|\Big)\Big(\int_{D_R}\tilde\mu_{R,\e}^2|\nabla\tilde w|^2\Big)^\frac12\Big(\int_{D_R}(1+|(\nabla\psi,\Sigma\mathds1_{\R^d\setminus\Ic})|^2)\Big)^\frac12.
\end{equation}
We turn to $J'_{1,2}$. Writing for abbreviation
\[H:=\sigma\big(u,P'-T_0[\hat P]-\eta_R\bb:T_0[\D(\hat u)]\big)-\eta_R T_0[\partial_E\hat u]\sigma(\psi_E+Ex,\Sigma_E\big),\]
and noting that $\Div(H)=0$ in $(I_n+\frac\delta4B)\setminus I_n$, $\int_{\partial I_n}H\nu=0$, and $\int_{\partial I_n}\Theta(x-x_n)\cdot H\nu=0$ for all $n\in\Nc_R$ and $\Theta\in\Md^\Skew$, the trace estimate~\eqref{eq:trace-estim} leads to
\begin{equation}\label{eq:trace-est}
|J'_{1,2}|\,\lesssim\,\sum_{n\in\Nc_R}\big((1-\eta_R)\tilde\mu_{R,\e}^2\big)(x_n)\Big(\fint_{I_n+\frac\delta2B}|\!\D(\hat u)|^2\Big)^\frac12\Big(\int_{(I_n+\frac\delta4B)\setminus I_n}|H|^2\Big)^\frac12.
\end{equation}
For all $n\in\Nc_R$, we can write in the annulus $(I_n+\frac\delta4B)\setminus I_n$ (where $P'=P+P_*$), recalling the definition~\eqref{eq:trunc-wQ} of the modified two-scale expansion error~$(\tilde w,\tilde Q)$ and the definition of truncations,
\begin{eqnarray*}
H&=&2\D\!\big(u-\eta_RT_1[\hat u]-\eta_R\psi_ET_0[\partial_E\hat u]\big)\\
&&\hspace{2cm}-\big(P'-T_0[\hat P]-\eta_R\bb:T_0[\D(\hat u)]-\eta_R\Sigma_ET_0[\partial_E\hat u]\big)\Id\\
&=&\sigma(\tilde w,\tilde Q)+2(1-\eta_R)T_0[\D(\hat u)].
\end{eqnarray*}
Inserting this into~\eqref{eq:trace-est}, using that $\sup_{B(x)}\tilde\mu_{R,\e}\simeq\inf_{B(x)}\tilde\mu_{R,\e}$ holds for all $x\in D_R$, and using that $\eta_R$ is constant in fattened inclusions,
we deduce
\begin{multline*}
|J'_{1,2}|\,\lesssim\,\Big(\int_{D_R}(1-\eta_R)^2\tilde\mu_{R,\e}^2|\!\D(\hat u)|^2\Big)^\frac12\\
\times\Big(\int_{D_R}\tilde\mu_{R,\e}^2(|\!\D(\tilde w)|^2+|\tilde Q|^2)+(1-\eta_R)^2\tilde\mu_{R,\e}^2|\!\D(\hat u)|^2\Big)^\frac12.
\end{multline*}
Combined with the bound~\eqref{eq:bnd-L1} on $J'_{1,1}$, the claim~\eqref{eq:bnd-J1} follows by Young's inequality.

\medskip
\substep{2.3}
Upper bound on $J_2,J_3$: for all $K\ge1$,
\begin{multline}\label{e.p1.7}
|J_2|+|J_3|
\,\lesssim\,
\frac1K\int_{D_R}\tilde\mu_{R,\e}^2|\nabla\tilde w|^2
+K\int_{D_R}(1-\eta_R)^2\tilde\mu_{R,\e}^2|\nabla\hat u|^2\\
+K\Big(\sup_{D_R}|\nabla(\eta_R\nabla\hat u)|^2\Big)\int_{D_R}|(\psi,\zeta)|^2.
\end{multline}
Expanding the gradients and using Young's inequality, we find for all $K\ge1$,
\begin{eqnarray*}
|J_2|&\lesssim&K\int_{D_R}(1-\eta_R)^2\tilde\mu_{R,\e}^2|\nabla\hat u|^2+\frac1K\int_{D_R}\tilde\mu_{R,\e}^2|\nabla\tilde w|^2+\frac1K\int_{D_R}|\nabla\tilde\mu_{R,\e}|^2|\tilde w|^2,\\
|J_3|&\lesssim&K\int_{D_R}\tilde\mu_{R,\e}^2|(\psi,\zeta)|^2|\nabla(\eta_R\nabla\hat u)|^2+\frac1K\int_{D_R}\tilde\mu_{R,\e}^2|\nabla\tilde w|^2+\frac1K\int_{D_R}|\nabla\tilde\mu_{R,\e}|^2|\tilde w|^2,
\end{eqnarray*}
and Hardy's inequality~\eqref{hardy} yields the claim~\eqref{e.p1.7}.

\medskip
\substep{2.4}
Control of truncation errors:
\begin{eqnarray}
\hspace{-1cm}\int_{D_R} \tilde \mu_{R,\e}^2 |\nabla (w-\tilde w)|^2 &\lesssim& \int_{D_R}(1-\eta_R)^2 \tilde \mu_{R,\e}^2 |\nabla\hat u|^2\nonumber\\
&&\hspace{1.5cm}+\Big(\sup_{D_R}|\nabla(\eta_R \nabla\hat u)|^2\Big)\int_{D_R}\big(1+ |(\psi,\nabla \psi)|^2\big),\label{eq:est-w-bar}\\
\hspace{-1cm}\int_{D_R} \tilde \mu_{R,\e}^2 (Q-\tilde Q)^2&\lesssim&\int_{D_R} (1-\eta_R)^2\tilde \mu_{R,\e}^2|\nabla\hat u|^2\nonumber\\
&&\hspace{1.5cm}+\Big(\sup_{D_R}|\nabla(\eta_R\nabla \hat u)|^2\Big)\int_{D_R}\big(1+\Sigma^2\mathds1_{\R^d\setminus\Ic}\big).\label{eq:est-Q-bar}
\end{eqnarray}
We start with the proof of~\eqref{eq:est-w-bar}.
The definition~\eqref{eq:trunc-wQ} of $\tilde w$ yields
\begin{multline*}
\nabla (w-\tilde w)\,=\,-\nabla (\hat u- T_1[\hat u])-\eta_R(\partial_E\hat u-T_0[\partial_E\hat u]) \nabla\psi_E
- \psi_E \otimes \nabla \big(\eta_R  (\partial_E\hat u-T_0[\partial_E\hat u])\big),
\end{multline*}
and thus
\begin{multline}\label{e.p1.11+0}
\int_{D_R} \tilde \mu_{R,\e}^2 |\nabla (w-\tilde w)|^2 \,\lesssim\, \int_{D_R} \tilde \mu_{R,\e}^2 |\nabla (\hat u-T_1[\hat u])|^2\\
+\Big(\sup_{D_R}|\eta_R(\nabla\hat u-T_0[\nabla\hat u])|^2+\sup_{D_R}|\nabla (\eta_R (\nabla\hat u-T_0[\nabla\hat u]))|^2\Big)\int_{D_R} |(\psi,\nabla \psi)|^2.
\end{multline}
The definition~\eqref{eq:def-T0T1} of the truncation maps $T_0,T_1$ gives
\begin{eqnarray*}
\nabla\hat u-T_0[\nabla\hat u]&=&\sum_{n\in\Nc_R}\chi_n\Big(\nabla\hat u-\fint_{I_n+\frac\delta2B}\nabla\hat u\Big)\\
\nabla(\hat u-T_1[\hat u])&=&\sum_{n\in\Nc_R}\chi_n\Big(\nabla \hat u-\fint_{I_n+\frac\delta2B}\nabla\hat u\Big)\\
&&+\sum_{n\in\Nc_R}\nabla\chi_n\bigg(\hat u-\Big(\fint_{I_n+\frac\delta2B}\hat u\Big)-\Big(\fint_{I_n+\frac\delta2B}\nabla\hat u\Big)(x-x_n)\bigg).
\end{eqnarray*}
Using the properties of $\tilde\mu_{R,\e}$, $\eta_R$, and of the cut-off functions $\{\chi_n\}_n$, and appealing to Poincaré's inequality on the fattened inclusions (on which we recall that $\eta_R$ is constant), we find
\begin{eqnarray}
\lefteqn{\int_{D_R} \tilde \mu_{R,\e}^2|\nabla (\hat u-T_1[\hat u])|^2}\nonumber\\
&\lesssim&\sum_{n\in\Nc_R}\Big(\sup_{I_n+\frac\delta2B}\tilde\mu_{R,\e}^2\Big)\int_{I_n+\frac\delta2B}\Big( \eta_R^2 |\nabla (\hat u-T_1[\hat u])|^2+   (1-\eta_R)^2 |\nabla (\hat u-T_1[\hat u])|^2\Big)\nonumber\\
&\lesssim&\sum_{n\in\Nc_R}\Big(\sup_{I_n+\frac\delta2B}\tilde\mu_{R,\e}^2\Big)\int_{I_n+\frac\delta2B}\Big( \eta_R^2 |\nabla^2\hat u|^2+   (1-\eta_R)^2 |\nabla\hat u|^2\Big)\nonumber\\
&\lesssim& \int_{D_R} |\nabla(\eta_R\nabla \hat u)|^2+ \int_{D_R} (1-\eta_R)^2\tilde \mu_{R,\e}^2 |\nabla \hat u|^2,\label{eq:bnd-uhatT1}
\end{eqnarray}
and similarly,
\begin{multline}\label{eq:bnd-ubarutil}
\sup_{D_R}|\eta_R(\nabla\hat u-T_0[\nabla\hat u])|+\sup_{D_R}|\nabla (\eta_R (\nabla\hat u-T_0[\nabla\hat u]))|\\
\,\lesssim\,\sup_{n\in\Nc_R}\Big(\eta_R(x_n)\sup_{I_n+\frac\delta2B}|\nabla^2\hat u|\Big)
\,\lesssim\,\sup_{D_R}|\nabla(\eta_R\nabla \hat u)|.
\end{multline}
Inserting these bounds into~\eqref{e.p1.11+0}, the claim~\eqref{eq:est-w-bar} follows.

\medskip\noindent
We turn to the proof of~\eqref{eq:est-Q-bar}.
The definition~\eqref{eq:trunc-wQ} of $\tilde Q$ yields
\begin{equation*}
Q-\tilde Q\,=\,-(\hat P-T_0[\hat P])-\eta_R\bb:(\D(\hat u)- T_0[\D(\hat u)])-\eta_R\Sigma_E\mathds1_{\R^d\setminus\Ic}(\partial_E\hat u-T_0[\partial_E\hat u]),
\end{equation*}
and thus
\begin{multline}\label{e.p1.11+}
\int_{D_R}\tilde\mu_{R,\e}^2(Q-\tilde Q)^2\,\lesssim\,\int_{D_R}\tilde\mu_{R,\e}^2(\hat P-T_0[\hat P])^2\\
+\Big(\sup_{D_R}|\eta_R(\nabla\hat u- T_0[\nabla\hat u])|^2\Big)\int_{D_R}(1+\Sigma^2\mathds1_{\R^d\setminus\Ic}).
\end{multline}
We start by analyzing the first right-hand side term. By definition of $T_0$, using the properties of $\tilde\mu_{R,\e}$ and appealing to Poincaré's inequality on the fattened inclusions (on which we recall that $\eta_R$ is constant), we find
\begin{multline}\label{eq:bnd-hatPdiffT0}
\int_{D_R}\tilde\mu_{R,\e}^2(\hat P-T_0[\hat P])^2\,\lesssim\,\sum_{n\in\Nc_R}\Big(\sup_{I_n+\frac\delta2B}\tilde\mu_{R,\e}^2\Big)\int_{I_n+\frac\delta2B}\Big(\hat P-\fint_{I_n+\frac\delta2B}\hat P\Big)^2\\
\,\lesssim\,\sum_{n\in\Nc_R}\Big(\sup_{I_n+\frac\delta2B}\tilde\mu_{R,\e}^2\Big)\int_{I_n+\frac\delta2B}\bigg(\eta_R^2|\nabla\hat P|^2+(1-\eta_R)^2\Big(\hat P-\fint_{I_n+\frac\delta2B}\hat P\Big)^2\bigg).
\end{multline}
We now appeal to a classical pressure estimates on $\hat P$.
On the one hand, since $(\hat u,\hat P)$ satisfies a steady Stokes equation~\eqref{e.p1.4} without forcing in $D_R$, a direct use of the Bogovskii operator in form of e.g.~\cite[Theorem III.3.1]{Galdi} yields for all $n\in\Nc_R$,
\begin{equation}\label{eq:bnd-hatPdiff}
\int_{I_n+\frac\delta2B}\Big(\hat P-\fint_{I_n+\frac\delta2B} \hat P\Big)^2\,\lesssim\, \int_{I_n+\frac\delta2B}|\nabla \hat u|^2.
\end{equation}
On the other hand, since $(\partial_i\hat u,\partial_i\hat P)$ satisfies the same equation in $D_R$, the same argument yields
\begin{equation*}
\int_{I_n+\frac\delta2B}\Big|\nabla\hat P-\fint_{I_n+\frac\delta2B}\nabla \hat P\Big|^2\,\lesssim\, \int_{I_n+\frac\delta2B}|\nabla^2 \hat u|^2,
\end{equation*}
Further noting that equation~\eqref{e.p1.4} yields
\begin{eqnarray*}
\int_{I_n+\frac\delta2B}\nabla \hat P&=&\int_{I_n+\frac\delta2B}\Div(2\Bb\D(\hat u))\\
&=&2\int_{\partial(I_n+\frac\delta2B)}(\Bb\D(\hat u))\nu\\
&=&2\int_{\partial(I_n+\frac\delta2B)}\Big(\Bb\D(\hat u)-\fint_{I_n+\frac\delta2B}\Bb\D(\hat u)\Big)\nu,
\end{eqnarray*}
and thus
\[\Big|\int_{I_n+\frac\delta2B}\nabla \hat P\Big|\,\lesssim\,\sup_{I_n+\frac\delta2B}|\nabla^2\hat u|,\]
we deduce
\begin{equation*}
\int_{I_n+\frac\delta2B}|\nabla\hat P|^2\,\lesssim\,\sup_{I_n+\frac\delta2B}|\nabla^2\hat u|^2.
\end{equation*}
Inserting this together with~\eqref{eq:bnd-hatPdiff} into~\eqref{eq:bnd-hatPdiffT0}, we obtain
\begin{equation*}
\int_{D_R}\tilde\mu_{R,\e}^2(\hat P-T_0[\hat P])^2
\,\lesssim\,|D_R|\Big(\sup_{D_R}|\nabla(\eta_R\nabla\hat u)|^2\Big)+\int_{D_R}(1-\eta_R)^2\tilde\mu_{R,\e}^2|\nabla\hat u|^2.
\end{equation*}
Combining this with~\eqref{e.p1.11+} and~\eqref{eq:bnd-ubarutil},
the claim~\eqref{eq:est-Q-bar} follows.

\medskip
\substep{2.5} Control of the divergence:
\begin{equation}\label{eq:est-div-w}
\int_{D_R}\tilde\mu_{R,\e}^2\,\Div(\tilde w)^2\,\lesssim\,\int_{D_R}(1-\eta_R)^2\tilde\mu_{R,\e}^2|\nabla\hat u|^2+\Big(\sup_{D_R}|\nabla(\eta_R\nabla\hat u)|^2\Big)\int_{D_R}\big(1+|\psi|^2\big).
\end{equation}
As $\Div (u)=\Div (\hat u)=\Div(\psi_E)=0$, the definition~\eqref{eq:trunc-wQ} of $\tilde w$ yields
\begin{equation*}
\Div (\tilde w)\,=\,\Div(\hat u-T_1[\hat u])-\psi_E\cdot\nabla(\eta_RT_0[\partial_E\hat u]),
\end{equation*}
and the claim~\eqref{eq:est-div-w} follows from the estimates~\eqref{eq:bnd-uhatT1} and~\eqref{eq:bnd-ubarutil}.

\medskip
\substep{2.6} Proof of~\eqref{e.p1.8}.\\
Combining~\eqref{eq:J0123}, \eqref{e.p1.6}, \eqref{eq:bnd-J1}, and~\eqref{e.p1.7}, we obtain for all $K\gg1$ and $0<\e\ll K^{-1/2}$,
\begin{multline*}
\int_{D_R} \tilde \mu_{R,\e}^2|\nabla w|^2
\,\lesssim\,\frac1K\int_{D_R}\tilde\mu_{R,\e}^2|\nabla\tilde w|^2
+\frac1K\int_{D_R}\tilde\mu_{R,\e}^2( Q^2+\tilde Q^2)\\
+K\Big(\sup_{D_R}|\nabla(\eta_R\nabla\hat u)|^2\Big)\int_{D_R}(1+|(\psi,\zeta,\nabla\psi,\Sigma\mathds1_{\R^d\setminus\Ic})|^2)
+K\int_{D_R}(1-\eta_R)^2\tilde\mu_{R,\e}^2|\nabla\hat u|^2\\
+K\int_{D_R} \tilde \mu_{R,\e}^2|\nabla(w-\tilde w)|^2
+K\int_{D_R} \tilde \mu_{R,\e}^2\,\Div(\tilde w)^2.
\end{multline*}
Decomposing $\nabla\tilde w=\nabla w+\nabla(\tilde w-w)$ and $\tilde Q=Q+(\tilde Q-Q)$, using the bounds~\eqref{eq:est-w-bar} and~\eqref{eq:est-Q-bar} on the truncation errors $\nabla(w-\tilde w)$ and $Q-\tilde Q$, and using the bound~\eqref{eq:est-div-w} on~$\Div(\tilde w)$, we find
\begin{multline*}
\int_{D_R} \tilde \mu_{R,\e}^2|\nabla w|^2
\,\lesssim\,\frac1K\int_{D_R}\tilde\mu_{R,\e}^2|\nabla w|^2
+\frac1K\int_{D_R}\tilde\mu_{R,\e}^2Q^2\\
+K\Big(\sup_{D_R}|\nabla(\eta_R\nabla\hat u)|^2\Big)\int_{D_R}(1+|(\psi,\zeta,\nabla\psi,\Sigma\mathds1_{\R^d\setminus\Ic})|^2)
+K\int_{D_R}(1-\eta_R)^2\tilde\mu_{R,\e}^2|\nabla\hat u|^2.
\end{multline*}
Choosing $K\gg1$ large enough to absorb the first right-hand side term, and noting that~$\tilde\mu_{R,\e}\simeq\mu_{R,\e}$ on $D_R$, the conclusion~\eqref{e.p1.8} follows.

\medskip
\step3 Weighted pressure estimate for the two-scale expansion error: for all $0<\e\ll1$,
\begin{multline}\label{e.p1.9}
\int_{D_R}\mu_{R,\e}^2 Q^2
\,\lesssim\,\int_{D_R}\mu_{R,\e}^2|\nabla w|^2
+\int_{D_R}(1-\eta_R)^2\mu_{R,\e}^2|\nabla\hat u|^2\\
+\Big(\sup_{D_R}|\nabla(\eta_R\nabla\hat u)|^2\Big)~\int_{D_R}\big(1+|(\psi,\zeta,\Sigma\mathds1_{\R^d\setminus\Ic})|^2\big).
\end{multline}
Combining this with the bound~\eqref{e.p1.8} on $\nabla w$, and choosing $K\gg1$ large enough, we deduce for all $0<\e\ll1$,
\begin{multline}\label{e.p1.9concl}
\int_{D_R} \mu_{R,\e}^2(|\nabla w|^2+Q^2)
\,\lesssim\,\int_{D_R}(1-\eta_R)^2\mu_{R,\e}^2|\nabla\hat u|^2\\
+\Big(\sup_{D_R}|\nabla(\eta_R\nabla\hat u)|^2\Big)\int_{D_R}(1+|(\psi,\zeta,\nabla\psi,\Sigma\mathds1_{\R^d\setminus\Ic})|^2).
\end{multline}
We turn to the proof of~\eqref{e.p1.9}. For that purpose, we shall again appeal to the truncated version $\tilde Q$ of~$Q$ as in Step~2, cf.~\eqref{eq:trunc-wQ}.
We also recall the notation~\eqref{eq:cst-press} for $P'$, where we choose the constants $P_*$ and $\{P_n\}_{n}$ such that
\[P_n=\fint_{I_n+\frac\delta2B} \hat P+\eta_R(x_n)\,\bb : \fint_{I_n+\frac\delta2B}\D(\hat u),\qquad \int_{D_R}\tilde \mu_{R,\e}^2 \tilde Q=0.\]
Note that this choice entails in particular $\tilde Q=0$ inside inclusions $\{I_n\}_{n\in\Nc_R}$.
With these definitions, we may turn to the proof of~\eqref{e.p1.9}, which we split into three further substeps.

\medskip
\substep{3.1} Weighted Bogovskii construction: given $0<\e<\frac1{2d}$, there exists a vector field $S \in H^1_0(D_R)^d$ such that $S|_{I_n}$ is constant for all $n\in \Nc_R$
and such that
\begin{gather}
\Div( S)=-\tilde \mu_{R,\e}^2 \tilde Q,\qquad\text{in $D_R$},\nonumber\\
\int_{D_R}\tilde \mu_{R,\e}^{-2} |\nabla S|^2 \lesssim \int_{D_R} \tilde \mu_{R,\e}^2 \tilde Q^2.\label{e:estim-weight-pres+}
\end{gather}
Since $\int_{D_R}\tilde\mu_{R,\e}^2\tilde Q=0$, and since the weight $\tilde\mu_{R,\e}^{-2}\simeq\mu_{R,\e}^{-2}$ on $D_R$ can be extended to \mbox{$x\mapsto|1-\frac{|x|}{R}|^{-\e}$} on $\R^d$, which belongs to the Muckenhoupt class $A_2$ uniformly in $R$ provided $\e<1$, and which satisfies \mbox{$\fint_{D_R}\mu_{R,\e}^{-2d}\lesssim\int_{B}(1-|x|)^{-\e d}\lesssim1$} provided $\e<\frac1{2d}$,
we may appeal to the weighted Bogovskii construction in form of Lemma~\ref{lem:muck}. Note that by definition the set~$D_R$ is star-shaped with respect to every point in $B_{R/2}$ as soon as $R \gg 1$. Hence,  there exists a vector field $S^\circ\in H^1_0(D_R)^d$ such that
\begin{gather*}
\Div (S^\circ)=-\tilde \mu_{R,\e}^2 \tilde Q,\qquad\text{in $D_R$},\nonumber\\
\int_{D_R}\tilde \mu_{R,\e}^{-2} |\nabla S^\circ|^2 \lesssim \int_{D_R} \tilde \mu_{R,\e}^2 \tilde Q^2.
\end{gather*}
It remains to modify $S^\circ$ to make it constant inside the inclusions $\{I_n\}_{n\in\Nc_R}$ without changing its divergence and the bound on its norm.
For that purpose, we essentially follow the argument of \cite[Proof of Proposition~2.1]{DG-19}; see also the proof of Lemma~\ref{lem:pres}.
More precisely, for all $n\in \Nc_R$, recalling that $\dist(I_n,\partial D_R)\ge \delta$ and that $\tilde Q=0$ in $I_n$, a standard use of the Bogovskii operator allows to construct as in~\eqref{eq:ext-Bogo} a vector field $S^n\in H^1_0(I_n+\frac\delta2B)^d$ such that $S^n=-S^\circ+\fint_{I_n}S^\circ$ in $I_n$ and
\begin{gather*}
\Div (S^n)=0,\qquad\text{in $I_n+\tfrac\delta2B$},\\
\|\nabla S^n\|_{\Ld^2((I_n+\frac\delta2B)\setminus I_n)} \,\lesssim\, \|\nabla S^\circ\|_{\Ld^2(I_n)}.
\end{gather*}
Smuggling in the weight $\tilde\mu_{R,\e}^{-1}$ (which is constant on the fattened inclusions), this yields
\begin{equation*}
\|\tilde\mu_{R,\e}^{-1}\nabla S^n\|_{\Ld^2((I_n+\frac\delta2B)\setminus I_n)} \,\lesssim\,\|\tilde\mu_{R,\e}^{-1}\nabla S^\circ\|_{\Ld^2(I_n)}.
\end{equation*}
Since the fattened inclusions are all disjoint, cf.~\ref{Hd}, extending $S^n$ by $0$ in $D_R\setminus(I_n+\frac\delta2 B)$ for all $n\in\Nc_R$, the vector field $S:=S^\circ+\sum_{n\in\Nc_R}S^n$ satisfies all the required properties.

\medskip
\substep{3.2} Proof of~\eqref{e.p1.9}.\\
Testing equation~\eqref{e.p1.5} with the test function $S\in H^1_0(D_R)^d$ constructed in the previous substep yields
\[L_0\,=\,L_1+L_2+L_3,\]
in terms of
\begin{eqnarray*}
L_0&:=&\int_{D_R} \nabla S : \nabla w -\int_{D_R} \Div(S)\, Q,\\
L_1&:=&-\sum_{n\in\Nc_R}\int_{\partial I_n}S\cdot\sigma(u,P+P_*-P_n)\nu+\sum_{n\in\Nc_R}\int_{I_n}(\eta_R\partial_E\hat u)\nabla S:J_E,\\
L_2&:=&-2\int_{D_R} (1-\eta_R)\nabla S : (\Id-\Bb )\D(\hat u),\\
L_3&:=&-\int_{D_R}\nabla S:\Big((2\psi_E\otimes_s-\zeta_E)\nabla(\eta_R\partial_E\hat u)-\Id(\psi_E\cdot\nabla)(\eta_R\partial_E\hat u)\Big).
\end{eqnarray*}
We start by giving a  lower bound on $L_0$.
Using the defining property~\eqref{e:estim-weight-pres+} of the test function $S$ in form of
\[-\int_{D_R} \Div (S)\, Q\,=\,\int_{D_R} \tilde \mu_{R,\e}^2 Q \tilde Q\,\ge\,\int_{D_R} \tilde \mu_{R,\e}^2 \tilde Q^2-\Big(\int_{D_R} \tilde \mu_{R,\e}^2 \tilde Q^2\Big)^\frac12\Big(\int_{D_R} \tilde \mu_{R,\e}^2 (Q-\tilde Q)^2\Big)^\frac12,\]
and using the bound~\eqref{e:estim-weight-pres+} on the weighted norm of $\nabla S$ in form of
\begin{eqnarray*}
\Big|\int_{D_R} \nabla S : \nabla w\Big|&\le& \Big(\int_{D_R} \tilde \mu_{R,\e}^{-2}  |\nabla S|^2\Big)^\frac12\Big( \int_{D_R} \tilde \mu_{R,\e}^{2}  |\nabla w|^2\Big)^\frac12\\
&\lesssim& \Big( \int_{D_R} \tilde \mu_{R,\e}^2 \tilde Q^2\Big)^\frac12\Big(\int_{D_R} \tilde \mu_{R,\e}^{2}  |\nabla w|^2\Big)^\frac12,
\end{eqnarray*}
we deduce for all $K\ge1$,
\begin{equation}\label{eq:lowerbnd-L0}
L_0\,\ge\,\int_{D_R}\tilde\mu_{R,\e}^2\tilde Q^2-\frac1K\int_{D_R}\tilde\mu_{R,\e}^2\tilde Q^2-K\int_{D_R}\tilde\mu_{R,\e}^2(Q-\tilde Q)^2-CK\int_{D_R}\tilde\mu_{R,\e}^2|\nabla w|^2.
\end{equation}
Next, recalling that $S|_{I_n}$ is constant for all $n\in\Nc_R$, and using the boundary conditions for~$u$, we find $L_1=0$.
It remains to estimate $L_2$ and $L_3$.
Smuggling in the weight $\tilde\mu_{R,\e}$, we find for all $K\ge1$,
\begin{multline*}
L_2+L_3\,\lesssim\,\frac1K\int_{D_R}\tilde\mu_{R,\e}^{-2}|\nabla S|^2
+K\int_{D_R}(1-\eta_R)^2\tilde\mu_{R,\e}^2|\nabla\hat u|^2\\
+K\Big(\sup_{D_R}|\nabla(\eta_R\nabla\hat u)|^2\Big)~\int_{D_R}\big(1+|(\psi,\zeta)|^2\big),
\end{multline*}
Using the weighted estimate~\eqref{e:estim-weight-pres+} on $\nabla S$ to estimate the first right-hand side term, and combining with the lower bound~\eqref{eq:lowerbnd-L0} on $L_0$, we deduce for all $K\ge1$,
\begin{multline*}
\int_{D_R}\tilde\mu_{R,\e}^2\tilde Q^2
\,\lesssim\,\frac1K\int_{D_R}\tilde\mu_{R,\e}^{2}\tilde Q^2
+K\int_{D_R}\tilde\mu_{R,\e}^2(Q-\tilde Q)^2
+K\int_{D_R}\tilde\mu_{R,\e}^2|\nabla w|^2\\
+K\Big(\sup_{D_R}|\nabla(\eta_R\nabla\hat u)|^2\Big)~\int_{D_R}\big(1+|(\psi,\zeta)|^2\big)
+K\int_{D_R}(1-\eta_R)^2\tilde\mu_{R,\e}^2|\nabla\hat u|^2,
\end{multline*}
Choosing $K\gg1$ large enough to absorb the first right-hand side term, and decomposing $\tilde Q=Q+(\tilde Q-Q)$, we obtain
\begin{multline*}
\int_{D_R}\tilde\mu_{R,\e}^2\, Q^2
\,\lesssim\,
\int_{D_R}\tilde\mu_{R,\e}^2(Q-\tilde Q)^2
+\int_{D_R}\tilde\mu_{R,\e}^2|\nabla w|^2\\
+\Big(\sup_{D_R}|\nabla(\eta_R\nabla\hat u)|^2\Big)~\int_{D_R}\big(1+|(\psi,\zeta)|^2\big)
+\int_{D_R}(1-\eta_R)^2\tilde\mu_{R,\e}^2|\nabla\hat u|^2.
\end{multline*}
Using the bound~\eqref{eq:est-Q-bar} on the truncation error $Q-\tilde Q$, and recalling that $\tilde\mu_{R,\e}\simeq\mu_{R,\e}$ on~$D_R$, the conclusion~\eqref{e.p1.9} follows.

\medskip
\step4 Conclusion: proof of~\eqref{e.p1.1}.\\
We split the proof into five further substeps.

\medskip
\substep{4.1} Caccioppoli-type inequality for homogeneous steady Stokes equation:
given a solution $(\bar v,\bar T)$ of
\begin{equation}\label{eq:vR0}
-\Div(2\Bb\D(\bar v))+\nabla\bar T=0,\qquad\Div(\bar v)=0,\qquad\text{in $B_R$},
\end{equation}
we have for all $0<r<R$ and $K\ge1$,
\begin{equation}\label{eq:Cacc}
\int_{B_r}|\nabla\bar v|^2\,\lesssim\,\frac1K\int_{B_R}|\nabla\bar v|^2+\frac{K}{(R-r)^2}\int_{B_R}|\bar v|^2.
\end{equation}
Consider a cut-off function $\chi_{r,R}\in C^\infty_c(\R^d)$ such that $\chi_{r,R}|_{B_r}=1$, $\chi_{r,R}|_{\R^d\setminus B_R}=0$, and $|\nabla\chi_{r,R}|\lesssim (R-r)^{-1}$.
Testing the equation~\eqref{eq:vR0} with the test function $\chi_{r,R}^2\bar v$, we find
\[\int_{\R^d}\chi_{r,R}^2\D(\bar v):2\Bb\D(\bar v)\,=\,-2\int_{\R^d}\chi_{r,R}\,\bar v\otimes\nabla\chi_{r,R}:(2\Bb\D(\bar v)-\bar T),\]
and thus
\begin{equation}\label{eq:bnd-Cac-Dv}
\int_{\R^d}\chi_{r,R}^2|\!\D(\bar v)|^2\,\lesssim\,\frac1{R-r}\Big(\int_{B_R}(|\!\D(\bar v)|^2+\bar T^2)\Big)^\frac12\Big(\int_{B_{R}}|\bar v|^2\Big)^\frac12.
\end{equation}
Since $\Div(\bar v)=0$, integration by parts yields
\begin{eqnarray*}
\int_{\R^d}\chi_{r,R}^2|\nabla\bar v|^2&=&2\int_{\R^d}\chi_{r,R}^2|\!\D(\bar v)|^2-\int_{\R^d}\chi_{r,R}^2\partial_i\bar v_j\partial_j\bar v_i\\
&=&2\int_{\R^d}\chi_{r,R}^2|\!\D(\bar v)|^2+2\int_{\R^d}\chi_{r,R}\nabla\chi_{r,R}\otimes\bar v:\nabla\bar v,
\end{eqnarray*}
and thus
\begin{equation*}
\int_{\R^d}\chi_{r,R}^2|\nabla\bar v|^2
\,\lesssim\,\int_{\R^d}\chi_{r,R}^2|\!\D(\bar v)|^2+\frac1{(R-r)^2}\int_{B_R}|\bar v|^2.
\end{equation*}
Combining this with~\eqref{eq:bnd-Cac-Dv}, we deduce for all $K\ge1$,
\begin{equation*}
\int_{B_r}|\nabla\bar v|^2\,\lesssim\,\frac1K\int_{B_R}(|\!\D(\bar v)|^2+\bar T^2)+\frac{K}{(R-r)^2}\int_{B_{R}}|\bar v|^2.
\end{equation*}
As the pressure $\bar T$ in~\eqref{eq:vR0} is only defined up to an additive constant, we may choose without loss of generality $\int_{B_R}\bar T=0$,
and we then appeal to a standard pressure estimate:
a standard use of the Bogovskii operator in form of e.g.~\cite[Theorem III.3.1]{Galdi} yields
\[\int_{B_{R}}\bar T^2\,\lesssim\,\int_{B_{R}}|\nabla\bar v|^2\]
and the claim~\eqref{eq:Cacc} follows.

\medskip
\substep{4.2} Interior regularity estimate for homogeneous steady Stokes equation~\eqref{e.p1.4}: for any boundary layer $4< \rho< R$,
\begin{eqnarray}\label{e.p1.14}
\rho^{2(n-1)}\sup_{B_{R-\rho}}|\nabla^n\hat u|^2\,\lesssim_n\,\big(\tfrac\rho R\big)^{-d}\fint_{D_R}|\nabla\hat u|^2.
\end{eqnarray} 
First consider a solution $(\bar v,\bar T)$ of the following homogeneous steady Stokes equation,
\begin{equation}\label{eq:vR}
-\Div(2\Bb\D(\bar v))+\nabla\bar T=0,\qquad\Div(\bar v)=0,\qquad\text{in $B$}.
\end{equation}
In view of the standard interior regularity theory for this equation, see~\cite[Theorem~IV.4.1]{Galdi}, we find for all $n\ge0$,
\[\int_{\frac12B}|\langle\nabla\rangle^n\nabla\bar v|^2\,\lesssim_n\,\int_{B}\big(|\nabla\bar v|^2+|\bar T|^2\big).\]
We then appeal to a pressure estimate for $\bar T$: assuming without loss of generality $\int_{B}\bar T=0$,
a standard use of the Bogovskii operator in form of e.g.~\cite[Theorem III.3.1]{Galdi} yields
\[\int_{\frac12B}|\langle\nabla\rangle^n\nabla\bar v|^2\,\lesssim_n\,\int_{B}|\nabla\bar v|^2.\]
By Sobolev's embedding, this entails for all $n\ge1$,
\[\sup_{\frac12B}|\nabla^n\bar v|^2\,\lesssim_n\,\int_{B}|\nabla\bar v|^2.\]
Upon rescaling and translation, this implies for all $\rho<1$, $x\in B_{1-\rho}$, and $n\ge1$,
\[\rho^{2(n-1)}|\nabla^n\bar v(x)|^2\,\lesssim_n\,\fint_{B_\rho(x)}|\nabla\bar v|^2,\]
hence, for all $n\ge1$,
\begin{eqnarray*}
\rho^{2(n-1)}\sup_{B_{1-\rho}}|\nabla^n\bar v|^2\,\lesssim_n\,\rho^{-d}\int_B|\nabla\bar v|^2.
\end{eqnarray*}
Turning back to equation~\eqref{e.p1.4} and recalling that $B_{R-3}\subset D_R$,
the claim~\eqref{e.p1.14} follows after rescaling.

\medskip
\substep{4.3} Reduction to the two-scale expansion error: for all $4\le r\le \frac14R$,
\begin{multline}\label{e.p1.13}
\fint_{B_r} \big|\nabla u-\nabla\hat u(0)-(\partial_E \hat u)(0)\nabla \psi_E\big|^2
\,\lesssim\,(\tfrac{r}{R})^{-d-2}\fint_{D_R} \mu_{R,\e}^2 |\nabla w|^2\\
+\bigg( (\tfrac rR)^2\fint_{B_R}\big(1+|\nabla\psi|^2\big)+(\tfrac{r}{R})^{-d}\frac1{R^2} \fint_{B_R} |\psi|^2 \bigg)\fint_{B_R} |\nabla u|^2.
\end{multline}
Consider the following local two-scale expansion error centered at the origin,
\[w_\circ:=u-\hat u(0)-\nabla\hat u(0)x-\psi_E\partial_E \hat u(0),\qquad Q_\circ:=P-\Sigma_E\partial_E \hat u(0),\]
and note that equations~\eqref{e:up-whole0} and~\eqref{eq:psiE-ref0} yield the following on $D_R$,
\[-\triangle w_\circ+\nabla (Q_\circ\mathds1_{\R^d\setminus\Ic})\,=\,-\sum_n\delta_{\partial I_n}\sigma(w_\circ,Q_\circ)\nu.\]
We appeal to a Caccioppoli-type argument: as in the proof of~\eqref{eq:Cacc}, choosing a cut-off function that is constant in the inclusions, and using the boundary conditions for~$u$ and~$\psi_E$, we find for all $4\le r\le\frac14R$ and $K\ge1$,
\begin{equation}\label{e.p1.15}
\fint_{B_{r}}|\nabla w_\circ|^2\,\lesssim\,\frac1K\fint_{B_{2r}}|\nabla w_\circ|^2+\frac K{r^2}\fint_{B_{2r}}| w_\circ|^2,
\end{equation}
and it remains to examine the last right-hand side term.
Comparing the local error $w_\circ$ to its global version $w=u-\hat u-\eta_R\psi_E\partial_E \hat u$, cf.~\eqref{eq:def-wQ},
and recalling that $\eta_R=1$ on $B_{R-2\rho}$,
we obtain from the triangle inequality, for all $r,\rho\le\frac R4$ (which entails $B_{2r}\subset B_{R-2\rho}$),
\begin{equation*}
\fint_{B_{2r}} |w_\circ|^2 \,\lesssim\, \fint_{B_{2r}} |w|^2+ \Big(\sup_{B_{2r}}|\hat u-\hat u(0)- \nabla\hat u(0) x|^2\Big)+\Big(\sup_{B_{2r}} |\nabla \hat u-\nabla \hat u(0)|^2\Big) \fint_{B_{2r}}|\psi|^2.
\end{equation*}
Using Taylor's formula, the interior regularity estimate~\eqref{e.p1.14} with $\rho=\frac R4$, and the energy estimate~\eqref{e.p1.12+}, we find for all $r\le\frac14R$,
\begin{multline*}
\sup_{B_{2r}}|\hat u-\hat u(0)- \nabla \hat u(0)x|^2+ r^2\sup_{B_{2r}}|\nabla \hat u-\nabla \hat u(0)|^2\\
\,\lesssim\, r^4 \sup_{B_{2r}}|\nabla^2\hat u|^2
\,\lesssim\, r^2(\tfrac rR)^2\fint_{D_R}|\nabla\hat u|^2
\,\lesssim\, r^2(\tfrac rR)^2\fint_{D_R}|\nabla u|^2,
\end{multline*}
so that the above becomes
\begin{equation}\label{e.p1.15b}
\fint_{B_{2r}} |w_\circ|^2 \,\lesssim\, \fint_{B_{2r}} |w|^2 + \Big(r^2(\tfrac rR)^2+(\tfrac{r}R)^{2-d}\fint_{B_{R}} |\psi|^2\Big) \fint_{D_R}|\nabla u|^2.
\end{equation}
It remains to analyze the first right-hand side term in this estimate.
By definition of the weight~$\mu_{R,\e}$ in~\eqref{eq:def-muReps}, appealing to Hardy's inequality~\eqref{hardy},
we find for all $r\le \frac14R$,
\begin{eqnarray*}
\fint_{B_{2r}} |w|^2 \,\lesssim\, \fint_{B_{2r}}  \big(1-\tfrac{|x|}{R}\big)^{\e-2} |w|^2
&\lesssim& (\tfrac rR)^{-d} \fint_{D_{R}} \big(1-\tfrac{|x|}{R}\big)^{\e-2}|w|^2\\
&\lesssim& \e^{-2} R^2(\tfrac rR)^{-d}  \fint_{D_{R}} |\nabla \mu_{R,\e}|^2 |w|^2\\
&\lesssim& R^2(\tfrac rR)^{-d}\fint_{D_R}  \mu_{R,\e}^2 |\nabla w|^2.
\end{eqnarray*}
Combined with~\eqref{e.p1.15} and~\eqref{e.p1.15b}, this yields the following, for all $4\le r\le \frac14R$ and $K\ge1$,
\begin{multline}\label{eq:bef-iter-abs}
\fint_{B_{r}}|\nabla w_\circ|^2\,\lesssim\,\frac1K\fint_{B_{2r}}|\nabla w_\circ|^2\\
+ K\Big((\tfrac rR)^2+(\tfrac{r}R)^{-d}\frac1{R^2}\fint_{B_{R}} |\psi|^2\Big) \fint_{D_R}|\nabla u|^2+K(\tfrac rR)^{-d-2}\fint_{D_R}  \mu_{R,\e}^2 |\nabla w|^2.
\end{multline}
In order to absorb the first right-hand side term, we proceed by iteration. Let us first rewrite~\eqref{eq:bef-iter-abs} as follows: for any $K\ge1$,
\[f(r)\le \frac1Kf(2r)+CKg(r),\qquad\text{for all $4\le r\le\tfrac14R$},\]
where we have set for abbreviation,
\begin{eqnarray*}
f(r)&:=&\fint_{B_r} |\nabla w_\circ|^2,\\
g(r)&:=&\Big( (\tfrac rR)^2+(\tfrac{r}{R})^{-d}\frac1{R^2} \fint_{D_R} |\psi|^2 \Big)\fint_{D_R} |\nabla u|^2
+(\tfrac{r}{R})^{-d-2}\fint_{D_R} \mu_{R,\e}^2 |\nabla w|^2.
\end{eqnarray*}
Iterating this estimate yields for all $r\ge4$ and $n\ge1$ with $2^nr\le\frac14R$,
\[f(r)\,\le\, CK\sum_{m=0}^{n-1} K^{-m}g(2^mr)+K^{-n}f(2^nr).\]
Noting that $g(2^mr)\le 4^mg(r)$ and choosing $K=8$, this entails
\[f(r)\,\lesssim\, g(r)+8^{-n}f(2^nr).\]
Choosing $n$ large enough such that $2^nr\simeq R$, with $2^nr\le\frac14R$, we deduce
\begin{equation}\label{eq:iterate-ag}
f(r)\,\lesssim\, g(r)+(\tfrac rR)^3 f(\tfrac14R).
\end{equation}
It remains to estimate the second right-hand side term. By definition of $f$ and of $w_\circ$, we find
\[f(\tfrac14R)\,\lesssim\,\fint_{D_R} |\nabla w_\circ|^2\,\lesssim\,\fint_{D_R}|\nabla u|^2+|\nabla\hat u(0)|^2\fint_{D_R}(1+|\nabla\psi|^2).\]
Using the interior regularity estimate~\eqref{e.p1.14} with $\rho\simeq R$ and using the energy estimate~\eqref{e.p1.12+}, we note that
\begin{equation}\label{eq:bnd-nabu0}
|\nabla\hat u(0)|^2\,\lesssim\,\fint_{D_R}|\nabla\hat u|^2\,\lesssim\,\fint_{D_R}|\nabla u|^2,
\end{equation}
so that the above becomes
\[f(\tfrac14R)\,\lesssim\,\Big(\fint_{D_R}\big(1+|\nabla\psi|^2\big)\Big)\fint_{D_R}|\nabla u|^2.\]
Combining this with~\eqref{eq:iterate-ag},
and inserting the definition of $f$, $g$, and $w_\circ$, the claim~\eqref{e.p1.13} follows.

\medskip
\substep{4.4} Estimate on the two-scale expansion error: for all $0<\e\ll1$,
\begin{multline}\label{eq:sub44}
\int_{D_R} \mu_{R,\e}^2|\nabla w|^2\\
\,\lesssim\,
\bigg((\tfrac{\rho}{R})^\e+(\tfrac\rho R)^{-d-2}\frac1{R^2}\fint_{D_R}\big(1+|(\psi,\zeta,\nabla\psi,\Sigma\mathds1_{\R^d\setminus\Ic})|^2\big)\bigg)\int_{D_R}|\nabla u|^2.
\end{multline}
Starting point is~\eqref{e.p1.9concl}: for all $0<\e\ll1$,
\begin{multline*}
\int_{D_R} \mu_{R,\e}^2|\nabla w|^2
\,\lesssim\,\int_{D_R}(1-\eta_R)^2\mu_{R,\e}^2|\nabla\hat u|^2\\
+\Big(\sup_{D_R}|\nabla(\eta_R\nabla\hat u)|^2\Big)\int_{D_R}\big(1+|(\psi,\zeta,\nabla\psi,\Sigma\mathds1_{\R^d\setminus\Ic})|^2\big).
\end{multline*}
Noting that the definition of $\eta_R$ and $\mu_{R,\e}$ entails $(1-\eta_R)^2\mu_{R,\e}^2\lesssim(\frac{\rho}{R})^\e$, recalling that $\eta_R$ is supported in $B_{R-\rho}$ and satisfies $|\nabla\eta_R|\lesssim\rho^{-1}$, using the interior regularity estimate~\eqref{e.p1.14}, and using the energy estimate~\eqref{e.p1.12+},
the claim~\eqref{eq:sub44} follows.

\medskip
\substep{4.5} Proof of~\eqref{e.p1.1}.\\
Inserting the error bound~\eqref{eq:sub44} into~\eqref{e.p1.13}, we find for all $4\le r,\rho\le\frac14R$,
\begin{multline}\label{eq:pre-concl}
\fint_{B_r} \big|\nabla u-\nabla\hat u(0)-(\partial_E \hat u)(0)\nabla \psi_E\big|^2
\,\lesssim\,\bigg((\tfrac rR)^2\fint_{B_R}\big(1+|\nabla\psi|^2\big)\\
+(\tfrac{r}{R})^{-d-2}\Big((\tfrac{\rho}{R})^\e+(\tfrac\rho R)^{-d-2}\frac1{R^2}\fint_{B_R}\big(1+|(\psi,\zeta,\nabla\psi,\Sigma\mathds1_{\R^d\setminus\Ic})|^2\big)\Big)\bigg)\fint_{B_R}|\nabla u|^2.
\end{multline}
Next, we slightly reformulate this estimate by removing the dependence on $\nabla\psi$.
For that purpose, we appeal to a Caccioppoli-type argument for $\psi$: arguing as in~\eqref{e.p1.15}, now starting from equation~\eqref{eq:psiE-ref0}, we find for all $K,R\ge1$,
\begin{equation*}
\fint_{B_{R}}|\nabla\psi|^2\,\lesssim\,\frac1K\fint_{B_{2R}}|\nabla\psi|^2+K\bigg(1+\frac{1}{R^2}\fint_{B_{2R}}\Big|\psi-\fint_{B_{2R}}\psi\Big|^2\bigg).
\end{equation*}
Iterating this estimate for some $K\gg1$ large enough, and recalling that the ergodic theorem yields $\fint_{B_R}|\nabla\psi|^2\to\expecmm{|\nabla\psi|^2}\lesssim1$ almost surely as $R\uparrow\infty$, we deduce for all $R\ge1$,
\begin{equation}\label{eq:bnditer-psiE}
\fint_{B_{R}}|\nabla\psi|^2
\,\lesssim\,1+\gamma_{R}^2,
\end{equation}
where we recall that $\gamma_R$ is defined in~\eqref{e.p1.2}.
Recalling the choice $\fint_{B_R}(\psi,\zeta,\Sigma\mathds1_{\R^d\setminus\Ic})=0$ in this proof, and appealing to the pressure estimate of Lemma~\ref{lem:pres} to further remove the dependence on $\Sigma$ in~\eqref{eq:pre-concl}, we obtain for all $4\le r,\rho\le\frac14R$,
\begin{equation*}
\fint_{B_r} \big|\nabla u-\nabla\hat u(0)-(\partial_E \hat u)(0)\nabla \psi_E\big|^2
\,\lesssim\,\bigg((\tfrac rR)^2
+(\tfrac{r}{R})^{-d-2}\Big((\tfrac{\rho}{R})^\e+(\tfrac\rho R)^{-d-2}\gamma_{R}^2)\Big)\bigg)\fint_{B_R}|\nabla u|^2.
\end{equation*}
It remains to optimize in $\rho$.
If $\gamma_{R}\le1$, the choice $(\frac\rho R)^{d+2+\e}\simeq\gamma_R^2$ yields the conclusion~\eqref{e.p1.1} with $M_0=\nabla\hat u(0)$ up to renaming $\e$. If $\gamma_R\ge1$ or if $\frac14R\le r\le R$, then the conclusion~\eqref{e.p1.1} trivially holds with $M_0=0$.

\medskip
\step5 Proof of the non-degeneracy property~\eqref{e.p1.3}.\\
The upper bound in~\eqref{e.p1.3} follows from the Caccioppoli-type inequality~\eqref{eq:bnditer-psiE}, and it remains to establish the lower bound.
Poincar\'e's inequality and the triangle inequality yield
\begin{eqnarray*}
\Big(\fint_{B_{R/2}}|\nabla \psi_E+E|^2\Big)^\frac12&\gtrsim &\frac1{R} \bigg(\fint_{B_{R/2}}\Big|(\psi_E+Ex)-\fint_{B_{R/2}}(\psi_E+Ex)\Big|^2\bigg)^\frac12
\\
&\ge&  \frac1{R} \Big(\fint_{B_{R/2}}|E x|^2\Big)^\frac12 - \frac1{R} \bigg(\fint_{B_{R/2}}\Big|  \psi_E-\fint_{B_{R/2}} \psi_E\Big|^2\bigg)^\frac12
\\
&\gtrsim&(1-C\gamma_R)|E|,
\end{eqnarray*}
and the conclusion~\eqref{e.p1.3} follows.\qed

\subsection{Proof of Lemma~\ref{lem:muck}: Weighted Bogovskii construction}\label{sec:pr-p1+}
By scaling, we may assume without loss of generality that the domain $D$ is contained in $\frac{R}{R_0}B$ and is star-shaped with respect to every point in~$B$.
In addition, it is enough to consider $F\in C^\infty_c(D)$.
A solution $S\in H^1_0(D)^d$ to the problem $\Div (S)=\mu F$ in $D$ can then be constructed via the Bogovskii formula, and its gradient can be represented as follows, cf.~\cite[(III.3.19)]{Galdi},
\[ \nabla S\,=\,F_1+F_2+F_3,\]
in terms of
\begin{eqnarray*}
F_{1,ij}(x)&=& \int_{D} K_{ij}(x,x-y)\mu(y)F(y)dy,\\
F_{2,ij}(x)&=& \int_{D} G_{ij}(x,y)\mu(y)F(y)dy,\\
F_{3,ij}(x)&=& \mu(x)F(x) \int_{D} \frac{(x_j-y_j)(x_i-y_i)}{|x-y|^2}\,\alpha(y)\,dy,
\end{eqnarray*}
where $K_{ij}$ is a singular Calder\'on--Zygmund kernel of the form
\[K_{ij}(x,x-y)=\frac{k_{ij}(x,\frac{x-y}{|x-y|})}{|x-y|^d},\]
where $G_{ij}$ satisfies the pointwise bound $|G_{ij}(x,y)|\lesssim  |x-y|^{1-d}$, and where $\alpha \in C^\infty_c(\R^d)$ is supported in $B$ and satisfies $\int_{B} \alpha=1$. Note that the integral defining $F_1$ is understood in the principal value sense.

\medskip\noindent
We turn to the estimate of $\int_D\mu^{-1}|\nabla S|^2$ and we separately analyze the contributions of~$F_1,F_2,F_3$.
First, as $K$ is a Calder\'on--Zygmund kernel and since $\mu^{-1}$ belongs to the Muckenhoupt class $A_2$, the weighted Calder\'on--Zygmund theory yields
\[\int_{\R^d} \mu^{-1}|F_1|^2 \,\lesssim\, \int_{D} \mu^{-1}|\mu F|^2 \,=\, \int_{D} \mu |F|^2.\]
Second, the Hardy--Littlewood--Sobolev inequality yields for $d>2$,
\begin{eqnarray*}
\int_{D} \mu^{-1}|F_2|^2
&\lesssim&\int_{D}\mu^{-1}(x) \Big(\int_{D} |x-y|^{1-d}|(\mu F)(y)|dy\Big)^2dx \\
&\lesssim&\|\mu^{-1}\|_{\Ld^{d/2}(D)}^2\|\mu F\|_{\Ld^2(D)}^2\\
&\lesssim&\|\mu^{-1}\|_{\Ld^{d/2}(D)}^2\int_D\mu|F|^2.
\end{eqnarray*}
For $d=2$, we may rather bound $|x-y|^{1-d}\lesssim_D|x-y|^{1-d-\eta}$ for $x,y\in D$, and a corresponding estimate is then deduced with a norm of $\mu^{-1}$ in $\Ld^{1+\eta}(D)$.
Finally, the properties of $\alpha$ yield
\[\int_D\mu^{-1}|F_3|^2\,\lesssim\int_D\mu|F|^2.\]
Summing these bounds on $F_1,F_2,F_3$ yields the desired conclusion.
\qed

\section{Quantitative homogenization}
This section is devoted to the proof of Theorem~\ref{th:hom-quant}.

\begin{proof}[Proof of Theorem~\ref{th:hom-quant}]
First consider a cut-off function $\eta_\e\in C^\infty_c(\R^d;[0,1])$ supported in $U$ such that $\eta_\e$ is constant inside the inclusions $\{\e I_n\}_n$, and $\eta_\e|_{\e I_n}=0$ for all $n\notin\Nc_\e(U)$.  In particular, $\Ic_\e(U)$ coincides with $\e\Ic$ in the support of $\eta_\e$. In addition, given $5\le R\le\frac1\e$ (to be later optimized depending on $\e$), we assume that  $\eta_\e=1$ in $U\setminus\partial_{\e R}U$ and $|\nabla\eta_\e|\lesssim(\e R)^{-1}$, where we use the notation \mbox{$\partial_{\e R}U:=\{x\in U:\dist(x,\partial U)<\e R\}$} for the fattened boundary.

\medskip
\step1 Two-scale expansion and representation of the error.\\
Let $(u_\e,P_\e)$ denote the solution of the heterogeneous Stokes equation~\eqref{eq:Stokes}, and let $(\bar u,\bar P)$ be the solution of the corresponding homogenized equation~\eqref{eq:Stokes-hom}.
The pressures $P_\e$ and~$\bar P$ are chosen such that $\int_{U}P_\e\mathds1_{\R^d\setminus\Ic_\e(U)}=\int_{U}\bar P=0$.
In terms of the corrector $(\psi,\Sigma)$, we consider the two-scale expansions
\[u_\e\leadsto\bar u+\e\eta_\e\psi_E(\tfrac\cdot\e)\partial_E\bar u,
\qquad P_\e\leadsto\bar P+\eta_\e\bb:\D(\bar u)+\eta_\e(\Sigma_E\mathds1_{\R^d\setminus\Ic})(\tfrac\cdot\e)\partial_E\bar u.\]
Given arbitrary constants $P_{\e,*}\in\R$ and $\{P_{\e,n}\}_n\subset\R$ (that will be made explicit later in the proof), we modify the pressure $P_\e$ into
\[P_\e':=(P_\e+P_{\e,*})\mathds1_{U\setminus\Ic_\e(U)}+\sum_{n\in\Nc_\e(U)}P_{\e,n}\mathds1_{\e I_n},\]
and we then consider the following two-scale expansion errors in $U$,
\begin{eqnarray*}
w_\e&:=&u_\e-\bar u-\e\eta_\e\psi_E(\tfrac\cdot\e)\partial_E\bar u,\\
Q_\e&:=&P_\e'-\bar P-\eta_\e\bb:\D(\bar u)-\eta_\e(\Sigma_E\mathds1_{\R^d\setminus\Ic})(\tfrac\cdot\e)\partial_E\bar u.
\end{eqnarray*}
Arguing as in Substep~1.2 of the proof of Proposition~\ref{p1}, cf.~\eqref{e.p1.5},
we find that $(w_\e,Q_\e)$ satisfies the following equation in the weak sense in $U$,
\begin{align}\label{eq:error-wPeps}
&\quad-\triangle w_\e+\nabla Q_\e\,=\,(\lambda-\mathds1_{\Ic_\e(U)})f\\
&\hspace{1cm}-\sum_{n\in\Nc_\e(U)}\delta_{\e\partial I_n}\sigma(u_\e,P_\e+P_{\e,*}-P_{\e,n})\nu
-\Div\big((\eta_\e\partial_E\bar u)J_E(\tfrac\cdot\e)\mathds1_{\e\Ic}\big)\nonumber\\
&+\Div\Big(2(1-\eta_\e)(\Id-\Bb)\D(\bar u)+2\e(\psi_E\otimes_s-\zeta_E)(\tfrac\cdot\e)\nabla(\eta_\e\partial_E\bar u)-\e\Id(\psi_E(\tfrac\cdot\e)\cdot\nabla)(\eta_\e\partial_E\hat u)\Big).\nonumber
\end{align}
In order to quantify the almost sure weak convergence $\mathds1_{\Ic_\e(U)}\cvf\lambda$ in $\Ld^2(U)$ in the first right-hand side term, we define a new corrector $\theta:=\nabla\gamma$ as the unique solution of the following infinite-volume problem:
\begin{enumerate}[\quad$\bullet$]
\item Almost surely, $\theta=\nabla\gamma$ belongs to $\Ld^2_\loc(\R^d)^d$ and satisfies
\[\Div(\theta)=\triangle\gamma=\mathds1_{\Ic}-\lambda,\qquad\text{in $\R^d$}.\]
\item The field $\nabla\theta=\nabla^2\gamma$ is stationary, has vanishing expectation, has finite second moment, and $\theta$ satisfies the anchoring condition $\fint_B\theta=0$ almost surely.
\end{enumerate}
Under the mixing condition~\ref{Mix+}, along the lines of the proof of Theorem~\ref{th:cor+} (but noting that no buckling is needed here as the corrector problem is linear with respect to randomness),
the following moment bounds are easily checked to hold for all $q<\infty$,
\begin{equation}\label{eq:bnd-gam}
\|\nabla\theta\|_{\Ld^q(\Omega)}\,\lesssim_q\,1,
\qquad
\|\theta(x)\|_{\Ld^q(\Omega)}\,\lesssim_q\,\mu_d(|x|).
\end{equation}
In terms of this corrector, recalling that $\Ic_\e(U)$ coincides with $\e\Ic$ in the support of~$\eta_\e$, the first right-hand side term in~\eqref{eq:error-wPeps} can be decomposed as
\begin{eqnarray*}
(\lambda-\mathds1_{\Ic_\e(U)})f&=&(\lambda-\mathds1_{\Ic_\e(U)})(1-\eta_\e)f+(\lambda-\mathds1_{\e\Ic})\eta_\e f\\
&=&(\lambda-\mathds1_{\Ic_\e(U)})(1-\eta_\e)f-\Div(\eta_\e f\otimes \e\theta(\tfrac\cdot\e))+\nabla(\eta_\e f)\,\e\theta(\tfrac\cdot\e).
\end{eqnarray*}
Inserting this into~\eqref{eq:error-wPeps}, we are led to the following equation for $(w_\e,Q_\e)$ on $U$,
\begin{multline}\label{eq:error-wPeps+}
-\triangle w_\e+\nabla Q_\e\,=\,(\lambda-\mathds1_{\Ic_\e(U)})(1-\eta_\e)f+\nabla(\eta_\e f)\,\e\theta(\tfrac\cdot\e)\\
-\sum_{n\in\Nc_\e(U)}\delta_{\e\partial I_n}\sigma(u_\e,P_\e+P_{\e,*}-P_{\e,n})\nu
-\Div\big((\eta_\e\partial_E\bar u)J_E(\tfrac\cdot\e)\mathds1_{\e\Ic}\big)\\
+\Div\Big(2(1-\eta_\e)(\Id-\Bb)\D(\bar u)-\eta_\e f\otimes \e\theta(\tfrac\cdot\e)
+2\e(\psi_E\otimes_s-\zeta_E)(\tfrac\cdot\e)\nabla(\eta_\e\partial_E\bar u)\\
-\e\Id(\psi_E(\tfrac\cdot\e)\cdot\nabla)(\eta_\e\partial_E\hat u)\Big).
\end{multline}

\step2 Conclusion.\\
We repeat the argument for~\eqref{e.p1.9concl} in Step~2 of the proof of Proposition~\ref{p1}, now without weight, starting from equation~\eqref{eq:error-wPeps+} instead of~\eqref{e.p1.5}. More precisely, we truncate $w_\e$ to make it affine in the inclusions, we test~\eqref{eq:error-wPeps+} with this truncated version of $w_\e$, we take advantage of boundary conditions, and we estimate the different terms.
Compared to equation~\eqref{e.p1.5}, the only new part here stems from the first two right-hand side terms in~\eqref{eq:error-wPeps+}, for which we simply appeal to Poincaré's inequality: as $w_\e\in H^1_0(U)^d$, we can estimate for any test function $g\in \Ld^2(U)^d$,
\[\Big|\int_Ug\cdot w_\e\Big|\,\le\,\Big(\int_U|g|^2\Big)^\frac12\Big(\int_U|w_\e|^2\Big)^\frac12\,\lesssim\,\Big(\int_U|g|^2\Big)^\frac12\Big(\int_U|\nabla w_\e|^2\Big)^\frac12.\]
In this way, for a suitable choice of the constants $P_{\e,*}$ and $\{P_{\e,n}\}_n$, we arrive at the following estimate,
\begin{multline*}
\int_{U}|(\nabla w_\e,Q_\e)|^2
\,\lesssim\,\int_{U}(1-\eta_\e)^2|(f,\nabla\bar u)|^2
+\e^2\int_U|\theta(\tfrac\cdot\e)|^2\big(|\nabla f|^2+|\nabla\eta_\e|^2|f|^2\big)\\
+\e^2\int_U\big(1+|(\psi,\zeta,\nabla\psi,\Sigma\mathds1_{\R^d\setminus\Ic})(\tfrac x\e)|^2\big)\Big(\sup_{B_{4\e}(x)}\big(|\nabla^2\bar u|^2+|\nabla\eta_\e|^2|\nabla\bar u|^2\big)\Big)\,dx.
\end{multline*}
Taking the $\Ld^q(\Omega)$ norm, using corrector estimates of Theorem~\ref{th:cor}, as well as~\eqref{eq:bnd-gam},
recalling that $1-\eta_\e$ and $\nabla\eta_\e$ are supported on the fattened boundary $\partial_{\e R}U$, noting that the latter has volume $|\partial_{\e R}U|\lesssim \e R$, and recalling that \mbox{$|\nabla\eta_\e|\lesssim(\e R)^{-1}$},
we deduce for all $q<\infty$,
\begin{equation*}
\expec{\Big(\int_{U}|(\nabla w_\e,Q_\e)|^2\Big)^q}^\frac1q
\,\lesssim_q\,
\big(\e R+\e^2 \mu_d(\tfrac1\e)^2\tfrac1{\e R}\big)\|(f,\nabla\bar u)\|_{W^{1,\infty}(U)}^2.
\end{equation*}
Next, decomposing
\begin{eqnarray*}
w_\e&:=&\big(u_\e-\bar u-\e\psi_E(\tfrac\cdot\e)\partial_E\bar u\big)+\e(1-\eta_\e)\psi_E(\tfrac\cdot\e)\partial_E\bar u,\\
Q_\e\mathds1_{U\setminus\Ic_\e(U)}&:=&\big(P_\e+P_{\e,*}-\bar P-\bb:\D(\bar u)-(\Sigma_E\mathds1_{\R^d\setminus\Ic})(\tfrac\cdot\e)\partial_E\bar u\big)\mathds1_{U\setminus\Ic_\e(U)}\\
&&\qquad+(1-\eta_\e)\big(\bb:\D(\bar u)+(\Sigma_E\mathds1_{\R^d\setminus\Ic})(\tfrac\cdot\e)\partial_E\bar u\big)\mathds1_{U\setminus\Ic_\e(U)},
\end{eqnarray*}
we deduce for all $q<\infty$,
\begin{multline*}
\big\|u_\e-\bar u-\e\psi_E(\tfrac\cdot\e)\partial_E\bar u\big\|_{\Ld^q(\Omega;H^1(U))}^2\\
+\inf_{\kappa\in\R}\big\|P_\e-\bar P-\bb:\D(\bar u)-(\Sigma_E\mathds1_{\R^d\setminus\Ic})(\tfrac\cdot\e)\partial_E\bar u-\kappa\big\|_{\Ld^q(\Omega;\Ld^2(U\setminus\Ic_\e(U)))}^2\\
\,\lesssim_q\,
\big(\e R+\e^2 \mu_d(\tfrac1\e)^2\tfrac1{\e R}\big)\|(f,\nabla\bar u)\|_{W^{1,\infty}(U)}^2.
\end{multline*}
Choosing $\e R=\e\mu_d(\tfrac1\e)$,
and using the regularity theory for the steady Stokes equation~\eqref{eq:Stokes-hom}, cf.~\cite[Section~IV]{Galdi},
this yields the conclusion~\eqref{eq:hom-quant0}.
\end{proof}

\section*{Acknowledgements}
The authors warmly thank Felix Otto for some enlightening comments on the structure of correctors in connection with Lemma~\ref{lem:ext-cor}.
MD acknowledges financial support from the CNRS-Momentum program,
and AG from the European Research Council (ERC) under the European Union's Horizon 2020 research and innovation programme (Grant Agreement n$^\circ$~864066).

\bibliographystyle{plain}
\bibliography{biblio}

\end{document}